\NewDocumentCommand{\F}{o}{
    \IfValueT{#1}{
            \mathbb{F}_{#1}
        }
    \IfValueF{#1}{
            \mathbb{F}
        }
                    }
\NewDocumentCommand{\R}{o}{
    \IfValueT{#1}{
            \mathbb{R}^{#1}
        }
    \IfValueF{#1}{
            \mathbb{R}
        }
                    }
\NewDocumentCommand{\N}{o}{
    \IfValueT{#1}{
            \mathbb{N}^{#1}
        }
    \IfValueF{#1}{
            \mathbb{N}
        }
                    }
\newcommand{ \eqdef }{
    \ensuremath{\stackrel{\mbox{\upshape\tiny def.}}{=}}
}
\newcounter{termcounter}
\renewcommand{\thetermcounter}{\Roman{termcounter}}
\crefname{term}{term}{terms}
\def\term{\@ifnextchar[\term@optarg\term@noarg}
\def\term@optarg[#1]#2{%
  \textup{#1}%
  \def\@currentlabel{#1}%
  \def\cref@currentlabel{[][2147483647][]#1}%
  \cref@label[term]{#2}}
\def\term@noarg#1{%
  \refstepcounter{termcounter}%
  \textup{(\thetermcounter)}%
  \cref@label[term]{#1}}
\newcommand\numberthis{\addtocounter{equation}{1}\tag{\theequation}}
\definecolor{warmblack}{rgb}{0.0, 0.26, 0.26}
\definecolor{jade}{rgb}{0.0, 0.40, 0.26}
\NewDocumentCommand{\del}{m}{
    {\color{gray}{ \tiny {#1} }}
}
\NewDocumentCommand{\add}{m}{{#1}}
\definecolor{alizarin}{rgb}{0.82, 0.1, 0.26}
\definecolor{cadmiumgreen}{rgb}{0.0, 0.42, 0.24}
\begin{document}

\title{Polynomial Scaling is Possible For Neural Operator Approximations of Structured Families of BSDEs}

\author{\name Takashi Furuya\thanks{Equal contribution, all authors are listed in alphabetic order. } \email tfuruya@mail.doshisha.ac.jp \\
        \addr Doshisha University, Japan
        \AND
        \name Anastasis Kratsios\footnotemark[1] \thanks{Corresponding author.} \email kratsioa@mcmaster.ca \\
        \addr Department of Mathematics\\
        McMaster University and Vector Institute, Canada}

\editor{Daniel Roy}

\maketitle

%
\begin{abstract}
Neural operator (NO) architectures learn nonlinear maps between infinite-dimensional function spaces and are widely used to accelerate simulation and enable data-driven model discovery.
While universality results ensure expressivity, they do not address \emph{complexity}: for broad operator classes described only through regularity (e.g.\ uniform continuity or $C^r$-regularity), information-theoretic lower bounds imply that minimax-optimal NO approximation rates scale \emph{exponentially} in the reciprocal accuracy $1/\varepsilon$. This has shifted the focus of NO theory toward identifying additional problem-specific structure, beyond regularity, under which suitably tailored NO architectures can leverage to unlock polynomial scaling in $1/\varepsilon$.

We exhibit the first polynomial-scaling regime for NO approximations of solution operators in stochastic analysis; by identifying structured families of \emph{non-Markovian} BSDEs with randomized terminal condition parameterized by the Sobolev-regular terminal condition and by Sobolev-regular additive nonlinear perturbations of the generator.  We prove that their solution operator can be approximated (uniformly over the family) by a tailored NO whose number of trainable parameters grows \emph{polynomially} in $1/\varepsilon$.  We unlock this polynomial scaling regime by \emph{informing the NO's inductive bias} by factoring out the singular part of the associated semilinear elliptic PDE Green's function and by incorporating the Dol\'{e}ans--Dade exponential of the BSDE's common non-Markovian factor into the NO's decoding layers.  As a byproduct, we extend polynomial-scaling guarantees from families of linear elliptic PDEs on regular domains to the semilinear setting.
\end{abstract}

\noindent \textbf{Keywords:} Neural Operators, \add{Polynomial Scaling}, Backward Stochastic Differential Equations, Operator Learning, Semilinear Elliptic PDE.



\section{Introduction}
\label{s:Intro}

Neural operators (NOs) have recently emerged as deep-learning tools for learning maps between infinite-dimensional Banach spaces, offering the prospect of amortizing the computational cost of classical numerical solvers during evaluation.
Although there is a myriad of universal approximation results for various NO architectures, e.g.~\cite{chen1993approximations,lu2019deeponet,kovachki2021universal,korolev2022two,galimberti2022designing,benth2023neural,lanthaler2023operator,furuya2023globally,acciaio2024designing,lanthaler2025nonlocality,cuchiero2026global}, each of which guarantees that any continuous nonlinear operator between suitable Banach spaces can be uniformly approximated on compact input sets, the main obstruction is information-theoretic in nature: minimax lower bounds~\cite{lanthaler2025curse} show that, in the worst case, achieving uniform approximation error at most $\varepsilon$ can require a number of trainable parameters scaling exponentially in $\varepsilon^{-1}$.  This has shifted the focus of operator-learning theory away from general guarantees of approximability towards identifying structured problem classes for which tailored NOs provably achieve \emph{polynomial scaling} in $\varepsilon^{-1}$.

\subsection{The NO Approximation Landscape: Universality vs.\ Polynomial Scaling}

\begin{wrapfigure}{r}{0.3\textwidth}
    \vspace{-0.8em}
    \centering
    \includegraphics[width=1\linewidth]{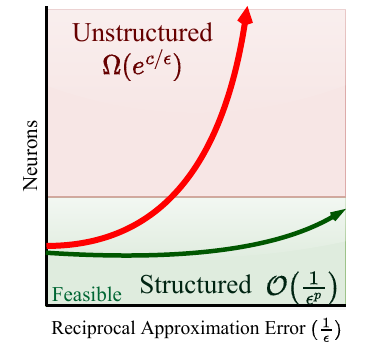}
    \vspace{-1em}
    \tiny \caption{NO Approximation Rate Landscape}\normalsize
    \label{fig:StructureInformedVsNot}
    \vspace{-0.8em}
\end{wrapfigure}

The contemporary landscape of approximation guarantees for infinite-dimensional machine learning, and especially for neural operators (NOs), can be segmented into two types of guarantees, summarized by the scaling laws in Figure~\ref{fig:FBNO}.  
In this paper, we disregard NO builds whose associated hypothesis classes have finite (local) Rademacher complexity (cf.~\cite{bartlett2002rademacher,bartlett2006local,bartl2025we}) or finite fat-shattering dimension (e.g.\ those of~\cite{alvarez2024neural}), since such models fall within the scope of classical learning-theoretic generalization theory~\cite{AlonBenDavidCesaBianchiHaussler_PDGeneralization}.  Our focus is instead on NO architectures whose learning-theoretic dimensions are infinite: for these, classical complexity-based arguments no longer yield generalization guarantees, yet certain such NOs are nevertheless known to generalize beyond their training data~\cite{benitez2024out}.

The body of machine learning, such as operator learning, over infinite-dimensional input spaces can be segmented into two classes.  
Either these models are capable of approximating \emph{general} families of solution operators under \textit{minimal structural assumptions} and must succumb to exponential scaling (the {\color{red}{red region}} in Figure~\ref{fig:StructureInformedVsNot}), or there is special structure which aligns with the inductive bias of a particular model and allows it to unlock sub-exponential scaling in the reciprocal approximation error; this latter, more challenging regime is the focus of this paper.  
We now provide visibility into what is currently known about these two regions of the operator-learning landscape.

\subsubsection{Well-Understood Regime: General Setting Implies {\color{red}{Exponential}} Scaling}

Though neural operators have proven numerically successful for solving certain parametric families of PDEs~\cite{pang2020npinns,li2020fourier,li2023fourier,raonic2024convolutional,benitez2024out,li2024physics} and admit a growing body of universal approximation guarantees~\cite{kovachki2021universal,lu2019deeponet,lanthaler2022error,lanthaler2023operator,kratsios2024mixture,cuchiero2023global,lanthaler2023nonlocal,ismailov2024universal}, recent information-theoretic results reveal a sharp \emph{generality--complexity} tradeoff.  In particular,~\citep[Theorem~2.8]{lanthaler2024operator} shows that, in the \textit{worst case}, the number of parameters required for a neural operator to uniformly approximate a generic nonlinear Lipschitz operator (such as our $\Gamma^{\star}$) is necessarily exponential, of order $\Omega(e^{c/\varepsilon})$ in the reciprocal accuracy $\varepsilon>0$, for some constant $c>0$, on spaces such as $W^{s,p}(\mathcal{D})\times W^{s + \frac{d+1}{2},2}(\partial \mathcal{D};\mathbb{R})$.  \add{Moreover, the minimax lower bounds in~\cite{lanthaler2025curse} align with available upper bounds for encoder--decoder architectures between general Fr\'{e}chet spaces~\cite{galimberti2022designing}, and with the upper bounds obtained for neural operators acting between Sobolev spaces over non-Euclidean physical domains~\cite{kratsios2024mixture}.}

We also note that~\cite{alvarez2024neural} establishes a universal neural-operator design for maps between spaces of square-integrable processes adapted to the filtration of Brownian motion, and provides explicit approximation rates for NOs with $\operatorname{ReLU}$ activations%
\footnote{They do not restrict to the super-expressive activations of~\cite{zhang2022deep}, which thus need not generalize.}%
; their rates likewise exhibit exponential dependence on $\varepsilon^{-1}$, despite universality.  Thus, if one is solely interested in an \emph{existence} guarantee, namely, that there exists a NO approximating the solution operator for a family of BSDEs under minimal structural hypotheses (e.g.\ Lipschitz generators and general terminal conditions); then~\cite[Theorem~12]{alvarez2024neural} is well-suited.  However, once one seeks \emph{sub-exponential} (and in particular polynomial) scaling laws, additional structure is unavoidable; this is precisely the second regime depicted in Figure~\ref{fig:StructureInformedVsNot}.

\subsubsection{Main Challenge: Identifying {\color{cadmiumgreen}{Polynomial}} Scaling Regimes}

In order not to circumvent the exponential scaling implied by general universal approximation guarantees wherein there is no structure for the NO to encode, one needs to focus on ``specially structured'' families of problems which are aligned with the inductive bias of a particular \textit{operator learning} build. To date, only a handful of stylized classes of problems allow for polynomial scaling in $\varepsilon^{-1}$; these mainly consist of linear elliptic PDEs~\cite{marcati2023exponential,feischl2025neural} with smooth ($C^{\infty}$) coefficients for general NO builds, problems such as Gaussian blurring whose solution operator is linear, or more generally problems with holomorphic dependence~\cite{adcock2024optimal,herrmann2024neural,franco2023approximation} which are also aligned with general NO builds,
families of Hamilton--Jacobi equations~\cite{lanthaler2025curse} with $C^k$-coefficients aligned to HJ-Nets, and specialized families of convex optimization problems~\cite{kratsios2025generative} aligned to deep equilibrium operators~\cite{marwah2023deep}. Alternatively, one can consider problems posed on finite-dimensional compact subsets~\cite{liu2024deep}, or families of problems which are effectively reducible to a finite-dimensional setting by imposing a finite-dimensional parametrization of the PDE~\citep[Theorem A.3-A.4]{berner2020numerically} and~\cite{elbrachter2022dnn}.

\subsubsection{The Gap: Are there ``Special Structures'' in Stochastic Analysis which are Aligned with the Inductive Bias of Specific NO Architectures?}
We emphasize that none of the aforementioned polynomial-scaling structures are currently known to arise in stochastic analysis problems (e.g.\ stochastic control or time-series analysis), which leaves the viability of NO pipelines for stochastic analysis tasks unclear.  In particular, to the best of our knowledge, there are no known ``special structures'' in stochastic analysis or stochastic control that imply polynomial scaling (in the reciprocal approximation error) for any existing \emph{universal} NO architecture.  This lack of identified polynomial-scaling regimes raises questions about the applicability of NOs to stochastic analysis, for which several \textit{universal} NO architectures---and hence ones which still succumb to the {\color{red}{exponential scaling regime}}---are available~\cite{galimberti2022designing}, and to related fields such as mathematical finance, economics, and game theory~\cite{alvarez2024neural,firoozi2025simultaneously,cuchiero2026global}, since the known approximation-theoretic results for these settings only exhibit exponential scaling%
\footnote{With the exception of~\cite{alvarez2024neural}, which obtains polynomial scaling by considering classes of NOs which (due to the authors' choice of activation function~\cite{zhang2022deep}) have infinite pseudo-dimension; thus such models do not admit pseudo-dimension--based generalization guarantees from finitely many training samples via~\cite{AlonBenDavidCesaBianchiHaussler_PDGeneralization}.}%
.

The closest result, which the authors are aware of, in the broader vicinity of machine learning for stochastic analysis lies in kernel methods based on signature features~\cite{lyons2014rough,kiraly2019kernels,chevyrev2022signature} building on rough path theory~\cite{hambly2010uniqueness,chevyrev2016characteristic}, for approximating functionals of a single deterministic continuous-time path.  In that case, one can deduce sub-polynomial (hence sub-exponential) complexity in $\varepsilon^{-1}$, e.g.\ for linear functionals of bounded-variation paths of unit length%
\footnote{
For the reader familiar with signature methods: 
For a $d$-dimensional BV path $X$ of unit length $\|X\|_{1\text{-var}}=1$, one has the factorial bound $\|X^{(k)}\|\le 1/k!$ (for any cross norm), and hence the truncation tail after level $N$ is bounded by $\mathcal{O}\big(1/(N+1)!\big)$; cf.\ \cite[Lemma 2.3]{hambly2010uniqueness}.
The $k$th level of the signature has dimension $d^k$, so the number of coordinates/parameters up to level $N$ is
$N_{\star}\eqdef\sum_{k=0}^N d^k\in \Theta(d^{N+1})$.
Thus $N\in \Theta(\log_d N_{\star})$, implying that the truncation error decays like
$\mathcal{O}\big(N_{\star}^{-(\log\log N_{\star})/\log d}\big)$ in the number of parameters $N_{\star}$.}%
.  However, the authors are not aware of comparable rates for families of nonlinear maps on continuous-time path spaces with values in infinite-dimensional spaces, i.e.\ operator learning on paths.

The objective of this paper is to address exactly \textit{this gap} by identifying structured families of backward stochastic differential equations (BSDEs) whose associated solution operators can be approximated by NOs while maintaining polynomial scaling in the reciprocal approximation error.  We focus on BSDEs since, following their introduction in a series of papers~\cite{BismutFirstPaper_OG_1973,PardouxPend_BSDE_OG_1990,Antonelli_FBSDE_OF_1993}, they have become a core component in stochastic optimal control, see e.g.~\cite{MaYongBook_1999,touzi2012optimal}, reinforcement learning~\cite{jia2022policy,MR4209484,MR2746549,MR4329785,MR4596108}, generative modelling~\cite{park2022neural}; and thus in economics~\cite{elie2019tale,MR3738664,elie2019contracting,weston2024existence}, green finance~\cite{MR3076679}, risk-management~\cite{armenti2017central,cvitanic2017moral}, option pricing~\cite{fujii2015fbsde}, and insurance~\cite{elliott2011bsde}.

\subsection{Main Contribution: Identifying Families of BSDEs with Special Structure}
More precisely, we study NO approximations to an \textit{infinite families} of BSDEs described as follows; with technical conditions outlined within the main text.
To explain, we fix a filtered probability space $(\Omega,\mathcal{F},\mathbb{F}\eqdef (\mathcal{F}_t)_{t\ge 0},\mathbb{P})$ satisfying the usual conditions and supporting a $d(\ge 2)$-dimensional Brownian motion $W_{\cdot}\eqdef (W_t)_{t\ge 0}$.  
Fix a $d$-dimensional predictable process $\beta_{\cdot}\eqdef (\beta_t)_{t\ge 0}$ satisfying a strong form of Novikov's condition (described below), and let $\gamma$ be a sufficiently regular function mapping $\mathbb{R}^{1+d}$ to the set of $d\times d$ symmetric positive-definite matrices $P_d^+$.  Fix sufficiently regular Lipschitz maps $\mu:\mathbb{R}^{d}\to \mathbb{R}^d$, $\alpha:\mathbb{R}^{d+1}\to \mathbb{R}$, as well as a bounded domain $ \mathcal{D}\subseteq \mathbb{R}^d$ with $C^1$ boundary and an initial point $x\in \mathcal{D}$.  We consider 
an \textit{infinite family of BSDEs} indexed by a set of pairs $(f_0,g)$ of sufficiently regular functions $g:\mathbb{R}^d\to \mathbb{R}$, and $f_0:\mathbb{R}^d\to \mathbb{R}$ inducing a BSDE with random terminal time $\tau$ as follows
\begin{align}
\label{eq:FBSDE_ForwardProcess}
    X_t & =
            x
    +
        \int_0^t
            \mu(X_s)
        +
        \sqrt{2\gamma(s,X_s)
        }\beta_s
         \,
         ds
    +
        \int_0^t\,
            \sqrt{2\,\gamma\add{(s,X_s)}
            }
        dW_s,    
\\
\label{eq:FBSDE}
    Y_t & = 
        \underbrace{
            g(X_{\tau})\,
            \add{\Upsilon_{\tau}^{-1}}
        }_{
        \underset{\text{\tiny Condition}}{\text{\tiny Terminal}}
        }
    + \int_{t\wedge \tau}^{\tau}\, 
        \Big(
            \alpha(X_s,Y_s)
            \add{
            +\beta_s^{\top}\,Z_s
            }
            +
            \underbrace{
                f_0(X_s)
            }_{
            \underset{\text{\tiny Dynamics}}{\text{\tiny Perturbed}}
            }
        \Big)
    ds - \int_{t\wedge \tau}^{\tau}Z_s dW_s
,
\\
\label{eq:RandomTerminalCondition}
\tau & \eqdef \inf\{
        t\ge 0:\, X_t\not\in \mathcal{D}
    \}
,
\end{align}
where $\Upsilon_{\cdot}\eqdef (\Upsilon_t)_{t\ge 0}$ denotes the Dol\'{e}ans-Dade exponential\footnote{Which will exist under our regularity conditions below.} of $\beta_{\cdot}$.  All regularity conditions required for this system to admit a solution, and for the system to be ``well-structured'' enough to admit NO approximations to its solution operator (mapping $(f_0,g)$ to the solution of \eqref{eq:FBSDE_ForwardProcess}-\eqref{eq:FBSDE}-\eqref{eq:RandomTerminalCondition}) with polynomial scaling in the reciprocal uniform approximation error, will be defined in the main body of our paper (Section~\ref{s:Main_Result}).  When it exists, we denote by $\Gamma^{\star}$ the map sending a pair $(f_0,g)$ to the solution of the system~\eqref{eq:FBSDE_ForwardProcess}-\eqref{eq:FBSDE}-\eqref{eq:RandomTerminalCondition} and refer to it as the \textit{solution operator} of this family of BSDEs.  This nonlinear operator is the central object that we approximate on a subset of its domain (Theorem~\ref{thrm:Main_Stochastic}).

These rates are made possible by aligning the inductive bias of our \textit{structure-informed} NO with the aforementioned BSDE families by incorporating both: (i) the convolution against the singular part of the Green's function associated with the family of \textit{semilinear} elliptic PDEs
\begin{align}
\label{eq:semilinear}
\add{\mathcal{L}}
u + \alpha(x,u) & = f_0 \ \mathrm{in} \ \mathcal{D},
\\
\label{eq:semilinear:BC}
u & =g \ \mathrm{on} \ \partial \mathcal{D}
\end{align}
indexed by the same boundary data $g$ and source data $f_0$; and (ii) the non-Markovianity of our specialized family of BSDEs via the Dol\'{e}ans--Dade exponential of the non-Markovian factor $\beta_{\cdot}$ in~\eqref{eq:FBSDE_ForwardProcess}.  
When it exists, the second operator which we approximate over a regular subset of its domain is the nonlinear operator sending $(f_0,g)$ to the solution of the semilinear elliptic PDE defined via~\eqref{eq:semilinear}--\eqref{eq:semilinear:BC}; we denote this operator by $\Gamma^{+}$ and refer to it as the \textit{solution operator} for this family of PDE systems.

\noindent
To the best of our knowledge, these are the first polynomial-complexity neural-operator approximation guarantees in stochastic analysis, avoiding the exponential $1/\varepsilon$ scaling characteristic of existing general-purpose NO guarantees; see, e.g.,~\cite{galimberti2022designing,liu2024scalable,pmlr-v242-zhang24f,diaz2024deep,firoozi2025simultaneously,alvarez2024neural}.  This supports the feasibility of NO pipelines in applications such as game theory and economics.

\subsection{Organization of Paper}
Section~\ref{s:Prelim} contains the preliminaries needed to formulate our main results, including notation and background in PDEs and operator learning. Section~\ref{s:Main_Result} states our main approximability guarantees for solution operators of certain families of FBSDEs (Theorem~\ref{thrm:Main_Stochastic}) and the corresponding families of elliptic PDEs (Theorem~\ref{thrm:Main_EllipticPDESemilinear}).~\add{Examples elucidating our NO build and the identified ``special structure'' are provided in Section~\ref{s:Examples}.
A brief discussion of the scope of these structured examples are in real-world applications and the role of the domain lifting channels are found in Section~\ref{s:Discussion}.
}
Proofs, technical lemmata, and auxiliary results (some of independent interest) are collected in Section~\ref{s:Proof}.

Appendix~\ref{a:DomainLiftingTrick} details our \textit{domain lifting trick}, and Appendix~\ref{a:Background_Wavelets} provides a brief, self-contained background on wavelets in Besov and Sobolev spaces needed for our proofs; for further background on FBSDEs we refer the reader to~\cite{MaYongBook_1999} since a deeper understanding thereof is not required for the formulation of our main results and for our proofs.

\section{Preliminaries}
\label{s:Prelim}
\add{All notation, background, and structural conditions required to formulate our main results can be found here.  The reader interested in the main result itself is encouraged to skip to the next section.}

\subsection{Notation}
\label{s:Prelim__ss:Notation}

We collect all notation used in our manuscript.  
For any $\sigma \in C(\mathbb{R})$, $N\in \mathbb{N}_+$, and any vector $x\in \mathbb{R}^N$, we denote the componentwise composition of $\sigma$ with $x$ by $\sigma\bullet x\eqdef (\sigma(x_n))_{n=1}^N$.
For each $d\in \mathbb{N}_+$ we set of $d\times d$ invertible matrices by $\operatorname{GL}_d\eqdef \{A\in \mathbb{R}^{d\times d}:\, \operatorname{det}(A)\neq 0\}$ and the set of symmetric positive-definite matrices by $P_d^+$.  
For any $p\in (1,\infty)$ we use $p'$ to denote its conjugate; i.e.\ $1/p' + 1/p=1$ and $p'=\infty$ if $p=1$.  The open ball of radius $R>0$ in any Banach space $X$, normed by $\|\cdot\|_X$, is denoted by $B_{X}(0,R)\eqdef \{x\in X:\, \|x\|_X<R\}$.  Given a (non-empty) open domain $\mathcal{D}$ in $\mathbb{R}^d$ and $1\leq p,q \leq \infty$, we will consider the mixed Lebesgue as 
$L^{p}(\mathcal{D}; L^{q}(\mathcal{D};\mathbb{R}))$, consisting of all $F:\mathcal{D}\to L^q(\mathcal{D},\mathbb{R})$ satisfying the integrability condition
\[
            \|F\|_{L^{p}(\mathcal{D}; L^{q}(\mathcal{D};\mathbb{R}))}
        \eqdef
            \Biggl(
                \int_{\mathcal{D}} 
                \biggl(
                    \int_{\mathcal{D}} |F(x,y)|^{q} dy
                \biggr)^{p/q}dx
            \Biggr)^{1/p}
    < 
        \infty
.
\]
We will use $W^{1,p}(\partial \mathcal{D})$ and $W^{1,p}(\mathcal{D})$ are the usual Sobolev spaces over $\partial \mathcal{D}$ and $\mathcal{D}$, respectively.
Throughout this manuscript, we will fix a filtered probability space $(\Omega,\mathcal{F},\mathbb{F}\eqdef (\mathcal{F}_t)_{t\ge 0},\mathbb{P})$ such that: $\mathbb{F}$-is right-continuous, $\mathcal{N}\subset \mathcal{F}_0$ with $\mathcal{N}=\{A\subseteq \Omega:\, \exists B\in\mathcal{F}: A\subset B  \mbox{ and } \mathbb{P}(B)=0\}$, and supporting a $d$-dimensional Brownian motion $W_{\cdot}\eqdef (W_t)_{t\ge 0}$.  

\subsection{Structural Assumptions}
\label{s:Prelim__ss:SettingAssumptions}
\add{We record the main structural conditions which will be key to the polynomial scaling guarantees.}
\subsubsection{Structure for Partial Differential Equations}
\label{s:Prelim__ss:SettingAssumptions___sss:PDE}
Throughout this paper, we assume the following minimal dimension and integrability range.
\begin{assumption}
\label{ass:p-p-prime}
Let $d \geq 2$, and let $1<p<\infty$ with conjugate satisfying $1 < p'= \frac{p}{p-1} < \frac{d}{d-1}$.
\end{assumption}
In what follows we will always consider 
\add{a second order differential operator $\mathcal{L}$ on $d$ symbols}.  whose associated 
(Dirichlet) Green’s function $G_{\mathcal{L}}$ for $
\add{\mathcal{L}}
$ with the Dirichlet boundary condition is given by
\begin{equation}
\label{eq:greensfunction}
\begin{aligned}
\add{\mathcal{L}}
G_{\mathcal{L}} (\cdot, y) 
& = -\delta(\cdot,  y)\,  & \mathrm{in} \ \mathcal{D},
\\
G_{\mathcal{L}} (\cdot, y) & = 0  \, &\mathrm{on} \ \partial \mathcal{D}.
\end{aligned}
\end{equation}
We only require that $G_{\mathcal{L}}$ satisfies the following integration-by-parts condition.  
\begin{assumption}[Weak Green's Function Symmetry]
\label{ass:GreensSymmetry}
The Green function in~\eqref{eq:greensfunction} admits the following decomposition
\begin{equation}
\label{eq:GreensFunction}
    G_{\mathcal{L}}(x,y)
= 
    \underbrace{
        \Phi_{\mathcal{L}}(x
        ,
        y)
    }_{\text{Singular Part}}
+ 
    \underbrace{
        \Psi_{\mathcal{L}}(x,y)
    }_{\text{Regular Part}}
.
\end{equation}
where $\Psi_{\mathcal{L}}:\mathcal{D}\times \mathcal{D} \to \mathbb{R}$ is a smooth function, and $\Phi_{\mathcal{L}}:\mathcal{D}\times \mathcal{D} \to \mathbb{R}$ is a singular function satisfying:
there exists some $\tilde{\Phi}_{\mathcal{L},\beta}(x,y) \in L^{\infty}_x(\mathcal{D}; L^{p'}_y(\mathcal{D};\mathbb{R}))$ such
that for each $\beta \in \mathbb{N}^d_0$ and $s \in \mathbb{N}_0$ with $|\beta| \leq s$ and $u \in W^{s,p}_0(\mathcal{D};\mathbb{R})$,
\begin{equation}
\label{eq:IP_target}
\int_{\mathcal{D}} \partial_{x}^{\beta} \Phi_{\mathcal{L}}(x,y)\,u(y)\,dy
=
\int_{\mathcal{D}} \tilde{\Phi}_{\mathcal{L},\beta}(x,y)\,\partial^{\beta}_{y} u(y)\,dy.
\end{equation}
\end{assumption}
Concrete examples will be provided in sub-section~\ref{s:Examples__ss:GreensFunctionDirichletProblem} following the exposition of our main results.

The following regularity condition is required to balance the Sobolev embedding against the approximation of the Green's functions associated with our family of PDEs in higher-order Sobolev norm (via
Jackson-Bernstein-type estimates in Lemma~\ref{lem:Sparse_bais_approximation_smooth_function_wkp-app}).  For the reader only interested in the PDE version of our main theorem (namely Theorem~\ref{thrm:Main_EllipticPDESemilinear}), removing this next assumption and \textit{using no domain lifting channels} would guarantee that a weakened $L^{p^{\prime}}$-version of that result would still hold; see see~\eqref{eq:semilinear_estimate__weak} below.  However, the stochastic version of our main result (Theorem~\ref{thrm:Main_Stochastic}) would fail.
\begin{assumption}[Regularity {vs.\ $p$-Integrability}]
\label{ass:sigma_s_k_p__regularity}
Let $k, \sigma \in \mathbb{N}$ be chosen so that $p$ satisfies: 
{
\begin{enumerate}
\item[(i)] \textbf{Sobolev-Embedding Lower-Bound:} $\frac{s+\lceil \frac{d}{p} \rceil + 1}{kd} < \frac{1}{p}$,
\item[(ii)] \textbf{Jackson-Bernstein Upper-Bound:} $\frac{1}{p} < 1 - \frac{\sigma}{kd}$.
\end{enumerate}
}
\end{assumption}
\begin{remark}[{The Freedom in Assumption~\ref{ass:sigma_s_k_p__regularity}}]
This means that we can choose any $s, d, p, \sigma$ under assumptions, but $k$ choose large enough depending on their parameters. 
\end{remark}

{
To show the local well-posedness of the semilinear elliptic problems associated with our FBSDEs, it is necessary to consider the perturbations $g$ and $f$ to be sufficiently small. Specifically we assume that these perturbations belong to certain balls of radius $\delta^2$, where $\delta$ is appropriately small depending on parameters $s, p, d, \mathcal{D}, \alpha, \gamma$, as in Assumption~\ref{ass:choice-delta-p}.
}
\begin{assumption}
\label{ass:choice-delta-p}
We take $0<\delta< \delta_0$ so that 
\begin{align*}
&
C_1 \delta < 1,
\quad
\rho \eqdef  C_2 \delta < 1,
\quad
C_7 \delta < 1,
\end{align*}
where the constants $C_1, C_2, C_7>0$ which will appear in (\ref{eq:C-1}), (\ref{eq:C-2}), $(\ref{eq:C-7})$ depending on $s, p, d, \mathcal{D}, \alpha, \mathcal{L}$, of course, independent of $\delta$.
\end{assumption}

\subsubsection{Structural Assumptions for BSDEs}
\label{s:Prelim__ss:SettingAssumptions___sss:BSDE}
\add{
We require that the process $\beta_{\cdot}$ in~\eqref{eq:FBSDE_ForwardProcess} satisfies as mild strengthening of the usual Novikov condition from~\cite{novikov1971moment}, ensuring that the sufficiently large reciprocal powers of the Dol\'{e}ans-Dade exponential of $\beta_{\cdot}$ are integrable; cf.\ Lemma~\ref{lem:Gamma_inverse_moment}.
\begin{assumption}[Strong Novikov Condition]
\label{ass:StrongNovikov}
Fix some $1<p<\infty$ and assume that the $d$-dimensional predictable process $\beta_{\cdot}
$ satisfies
\begin{equation}
\label{eq:finienss_doeals_date}
        \mathbb{E}\biggl[
            \exp\big(
            \tfrac{1}{2}\big(
                p^2 + p
            \big)
                \int_0^T
                \,
                    \|\beta_t\|^2
                \,
                dt
            \big)
        \biggr]
    <
        \infty
.
\end{equation}
\end{assumption}
Examples of non-Markovian factors satisfying Assumption~\ref{ass:StrongNovikov} are provided in sub-section~\ref{s:Examples__ss:BSDE___sss:NonMarkFact}.
}

\add{
We simply require that the generator term $\alpha$ is polynomial in a neighbourhood compactly supported as a function of $z$%
\footnote{The map $\alpha$ cannot be polynomial and compactly supported unless it is null.}%
, at-least in a neighbourhood of the origin (but not necessarily globally), with coefficients depending on $x$ in a \textit{Sobolev} manner.  We provide a family of examples of such generators $\alpha$ in Section~\ref{s:Examples__ss:Generators}.
}
\begin{assumption}[Regularity of the ``Reference'' Generator to the Backwards Process]
\label{ass:semilinear-term}
Let $s \in \mathbb{N}$ {(possibly zero)}.
We assume that $\alpha:\mathbb{R}^d\times \mathbb{R}\to \mathbb{R}$
satisfies the following conditions: 
\begin{description}
\item[(i)] There exists $\delta_0 >0$ and $H \in \mathbb{N}_{+}$, with $H>2$, such that 
$\alpha(x,z)=\sum_{h=0}^{H}\frac{\partial_{z}^{h}\alpha(x,0)}{h!}z^h$ for $|z|<\delta_0$, and $x \in \mathcal{D}$.
\item[(ii)] $\alpha(x,0)=\partial_{z}^{1}\alpha(x,0)=0$ for $x \in \mathcal{D}$. 
\item[(iii)] $\partial_{z}^{h}\alpha(\cdot,0) \in W^{s + \lceil \frac{d}{p} \rceil+1+ \sigma ,\infty}_0(\mathcal{D};\mathbb{R})$ for each $h =2,...,H$ where $s,\sigma \in \mathbb{N}$ with $\sigma>s+\lceil \frac{d}{p} \rceil + 1$.
\item[(iv)] There is a compact $K\subset \mathbb{R}^d$ containing $\mathcal{D}$ such that $\alpha(x,0)=0$ for all $x\not\in K$.
\end{description}
\end{assumption}
\add{Note the condition (ii) is no more restrictive since the zero-th order term $\alpha(\cdot, 0)$ is absorbed into the source term $f_0$ and first order term $\partial_{z}^{1}\alpha(\cdot,0)$ is included in the linear differential operator $\mathcal{L}$.}
Finally, we require the following growth conditions; cf.~\cite[page 23]{pardoux1998backward}.
\begin{assumption}[{Regularity: Unperturbed Backwards Process}]
    \label{eq:Growth_alpha}
    There exist constants $C_1,q>0>\tilde{C}_2$ such that: for all $(x,y) \in \mathbb{R}^d \times \mathbb{R}$ {and $\tilde{y} \in \mathbb{R}$}
    \item[(i)] $|\alpha(x,y)| \le C_1 (1+|x|^q + |y|)$,
    \item[(ii)] $(y-\tilde{y})(
         |\alpha(x,y)| - |\alpha(x,\tilde{y})| 
    ) \le \tilde{C}_2 |y-\tilde{y}|^2$,
\end{assumption}

\subsection{Background on Standard Deep Learning Models}
\label{s:Prelim__ss:DLModels}
We now overview the required background on standard deep learning models defined with either of two activation functions.  
Specifically, we consider the rectified quadratic unit and the hard sigmoid activation functions, defined for each $t\in\mathbb{R}$ by
$
    \operatorname{ReQU}(t) \eqdef \max\{0,t\}^2
$.

\begin{definition}[Neural Network]
\label{defn:BoundedReLUMLP}
Let $d,D\in \mathbb{N}_+$, fix an activation function $\sigma\in C(\mathbb{R})$, and depth and width parameters $L,W\in \mathbb{N}_+$.  A function $f\in C(\mathbb{R}^d,\mathbb{R}^D)$ is a multilayer perceptron (MLP) with activation function $\sigma$, in the class $\mathcal{NN}_{L,W:d,D}^{\sigma}$, if it admits the iterative representation
\[
\begin{aligned}
    f(x) & = A_{L}x^{(L)} + b_{L}\\
    x^{(l)} & = \sigma\bullet  \big(A_{l}x^{(l-1)} + b_{l}\big) \qquad t=1,\dots,L-1\\
    x^{(0)} & \eqdef x
\end{aligned}
\]
where $A_l\in \mathbb{R}^{d_{l+1}\times d_l}$, $b_l\in \mathbb{R}^{d_{l+1}}$, $d_{L+1}=D$, $d_0=d$, and $d_l\le W$ for $l=1,\dots,L$.  
\end{definition}

\begin{figure}[H]
    \centering
    \includegraphics[width=.98\textwidth]{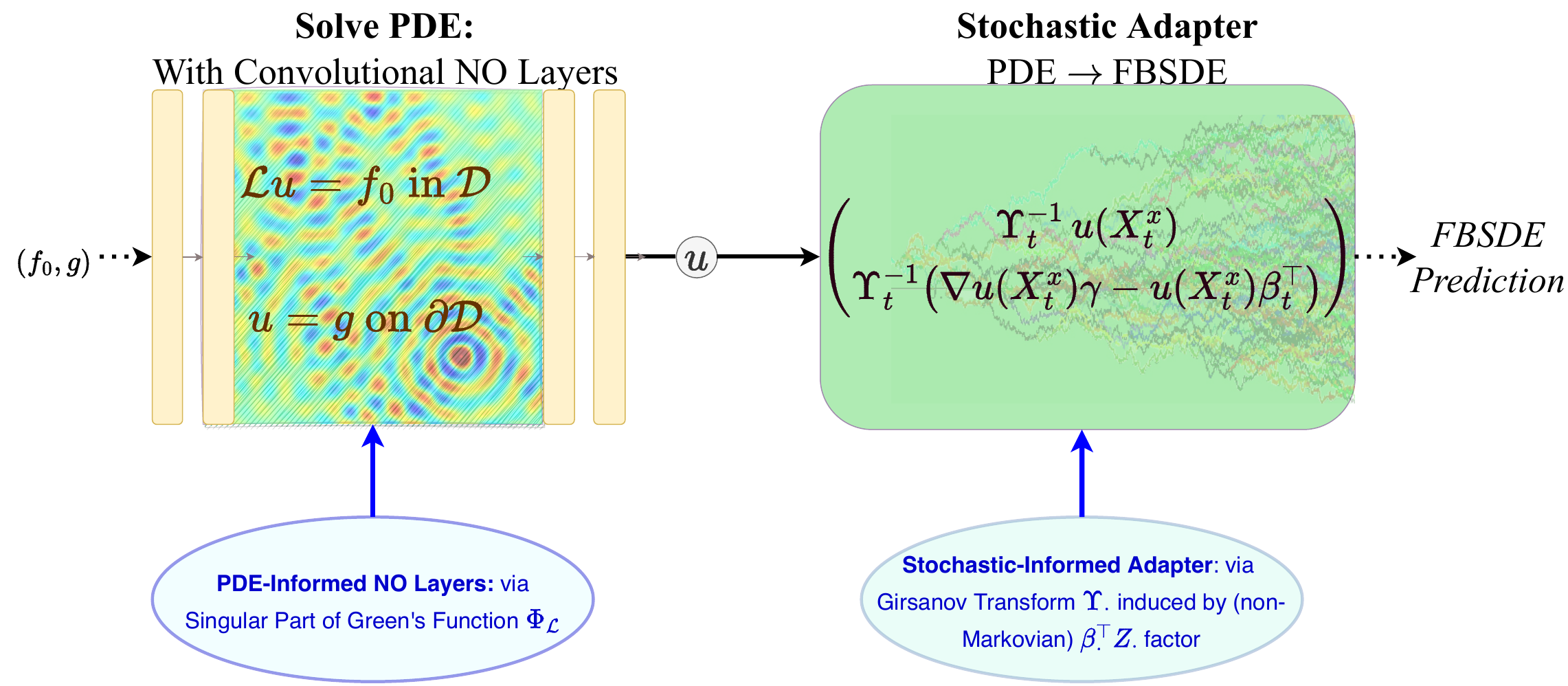} 
\caption{\textit{Neural Operator Architecture}:
\hfill\\
\textbf{The Convolutional NO (PDE-Informed):}
A \textit{convolutional NO} solves the Elliptic Dirichlet problem associated with the FBSDE in~\eqref{eq:FBSDE}, for boundary/terminal data $g$ and BSDE source $f_0$.  Each convolutional layer encodes the relevant Green's function, and their composition implements a rapidly convergent fixed-point iteration for the resulting elliptic solution $u$.
\hfill\\
\textbf{The BSDE Adapter (BSDE-Informed):}
From the NO output $u$ (with inductive bias aligned to the semilinear elliptic PDEs induced by our BSDE family), we evaluate $u$ and $\nabla u$ along forward trajectories $X_{\cdot}$ (cf.~\eqref{eq:FBSDE_ForwardProcess}) and combine them with the reciprocal Dol\'eans--Dade exponential of $\beta_{\cdot}$ to produce a structure-informed prediction of the BSDE pair $(Y_{\cdot},Z_{\cdot})$.}
    \label{fig:FBNO}
\end{figure}

\section{A Neural Operator With a Structure-Informed Inductive Bias}
\label{s:Background__ss:NOBuilds}
We recall that $1<p<\infty$, $s \in \mathbb{N}$, and $\mathcal{D} \subset \mathbb{R}^d$ is a bounded domain.
We add onto the growing operator-learning literature (e.g.~\cite{DENG2022411,de2022deep,korolev2022two,HUA202321,furuya2023globally,furuya2024can,OLInv,chen2023deep,JMLR:v26:24-0745,MR4582511,li2023fourier,MR4577168,lanthaler2023operator,liu2024deep,li2024physics,cao2024laplace,XIAO2024106620}) by constructing an NO that approximates the solution operator $\Gamma^{\star}$ by \add{exploiting both the structure of the PDE associated with the hypothetical problem when $\beta_{\cdot}=0$, via the PDE representation of~\cite{pardoux1998backward}, and the stochastic structure of the Dol\'{e}ans–Dade exponential associated with the (possibly) non-Markovian process $\beta_{\cdot}$; see Figure~\ref{fig:FBNO}.}

We build on two standard variants of the standard non-local neural operators (NO), considered e.g.\ in~\cite{kovachki2023neural,lanthaler2023nonlocal}.  Our first variant extends the classical NO construction by incorporating a convolution operator, once per layer.  
{
This is motivated by the fact that the Green's function for each elliptic PDE associated with our FBSDEs can be decomposed into a ``singular part'' and a ``regular part''.
}
The convolutional component of each layer is designed to encode the singular part of the Green's function while the remaining traditional affine parts of our NO layers can encode the regular part of the Green's function.

\subsection{{The PDE Build: Convolutional Neural Operator with Domain Lifting}}
\label{s:Background__ss:NOBuilds___sss:PDE}

Convolution operators have been employed in \cite{raonic2024convolutional}, where the motivations for their use might differ from our insight into the singularity in Green's function.  
We also incorporate a skip connection in our neural operators, since it is well-known that they improve the loss landscape~\cite{riedi2023singular} and expressivity~\cite{Bronstein2024} of classical neural networks by ensuring that information propagates forward through their hidden layers.

\begin{definition}[Neural Operator]
\label{def:neural-operator}
{Fix a \textit{lifting dimension} $k\in \mathbb{N}_+$}.
Let $\varphi = \{\varphi_n^{\uparrow}\}_{n \in \nabla}$ be sufficiently smooth and compactly supported wavelet frames of $L^p(\mathcal{D})$ and $L^p(\mathcal{D}^k)$, respectively, {where $\nabla = \{ (j, r) : j \in \mathbb{N}, \ r = 1,..., N_j \}$} \footnote{{We will discuss the details of wavelet frames in Lemma~\ref{lem:expansion-Green} and Appendix~\ref{a:DomainLiftingTrick}}}.
Let $\mathcal{L}$ satisfy Assumption~\ref{ass:GreensSymmetry} and let $\Phi_{\mathcal{L}}$ be the singular part in~\eqref{eq:GreensFunction}.
A \textit{convolutional} neural operator is a map $
\Gamma: W^{s,p}(\mathcal{D};\mathbb{R})^2 \to W^{s,p}(\mathcal{D};\mathbb{R})$ 
\[
\Gamma : v_{0} \mapsto v_{L+1}, 
\]
admitting the following iterative
where $v_{L+1}$ is give by the following steps: 
\[
v_{\ell+1}(x) = \sigma \bullet \left( W^{(\ell)} v_{\ell}(x) + 
(K^{(\ell)}_{N}v_{\ell})(x) +
(K^{(\ell)}v_{\ell})(x) + b^{(\ell)}_N(x)  \right)
+ 
W^{(\ell)}_s v_{\ell}(x),
\]
for $\ell =0,\dots,L$ and $x \in \mathcal{D}$; where $\sigma : \mathbb{R} \to \mathbb{R}$ is a nonlinear activation operating componentwise, and $W^{(\ell)}, W^{(\ell)}_s \in \mathbb{R}^{d_{\ell+1}\times d_{\ell}}$ { ($d_0=2, d_{L+1}=1$)}
\[
v_{L+1}(x) = W^{(L)}v_{L} +
(K^{(L)}_{N}v_{L})(x) + 
(K^{(L)}v_{L})(x) + b^{(L)}_N(x), 
\]
and the convolutional layers%
    \footnote{\add{See Example~\ref{ex:convol__prop:L_expansion} in our examples sub-section~\ref{s:Examples__ss:GreensFunctionDirichletProblem} for a concrete instantiation of this convolutional layer.}}~%
are 
\begin{equation}
\label{eq:NOLayers__K_fin_rank}
(K^{(\ell)}v)(x) \eqdef 
\underbrace{
    C^{(\ell)}
    \int_{\mathcal{D}} 
        \Phi_{\mathcal{L}}(x,y) v(y) dy
}_{\text{Singular Part}}
\mbox{ and }
(K^{(\ell)}_{N}v)(x) \eqdef 
\underbrace{
\sum_{n,m \in \Lambda_{N}}
    C_{n,m}^{(\ell)}
    \langle \varphi_{m}{{^{\uparrow}} \circ A, 
    }v  \rangle \varphi_{n}{^{\uparrow}}(A(x))
}_{\text{Regular Part}},
\end{equation}
where $ \Lambda_{N} \subset \nabla$ with $|\Lambda_{N}| \leq N$
each \textit{lifting channel} $A:\mathbb{R}^d\to\mathbb{R}^{dk}$ 
is injective map,
and the bias layers are
\[
b_{N}^{(\ell)}(x) = \sum_{n \in \Lambda_{N}} b_n^{(\ell)} \varphi^{\uparrow}_n( A(x)),
\]
for $x \in \mathcal{D}$ and $v \in L^{p}(\mathcal{D};\mathbb{R})^{d_{\ell}}$; 
where $C^{(\ell)}_{m,n}, C^{(\ell)} \in \mathbb{R}^{d_{\ell+1}\times d_{\ell}}$ and $b^{(\ell)}_n \in \mathbb{R}^{d_{\ell+1}}$;
where
\[
\begin{aligned}
\langle \varphi^{\uparrow}_{m}\circ A, v \rangle
&\eqdef
\left( \langle \varphi^{\uparrow}_{m}\circ A, v_1 \rangle,..., \langle \varphi^{\uparrow}_{m}\circ A, v_{d_\ell} \rangle \right) \in \mathbb{R}^{d_{\ell}}, 
\\
\int_{\mathcal{D}} 
        \Phi_{\mathcal{L}}(x,y) v(y) dy
& \eqdef
\left( \int_{\mathcal{D}} \Phi_{\mathcal{L}}(x,y) v_1(y) dy,..., \int_{\mathcal{D}} \Phi_{\mathcal{L}}(x,y) v_{d_\ell}(y) dy \right) \in \mathbb{R}^{d_{\ell}}, 
\end{aligned}
\]
for $v \in L^{p}(\mathcal{D};\mathbb{R})^{d_{\ell}}$;  where $\langle f, g \rangle=\int f g dx$.
\medskip
\hfill\\
We denote by $\mathcal{NO}^{L, W, \sigma}_{N, \varphi}=\mathcal{NO}^{L, W, \sigma}_{N, \varphi}(W^{s,p}(\mathcal{D};\mathbb{R})
^2
, W^{s,p}(\mathcal{D};\mathbb{R})
)$ the class of neural operators defined above, with depth $L$, width $W$, rank $N$, activation function $\sigma$, and wavelet frame $\varphi$. 
\end{definition}

The operators $K^{(l)}_N$ play a crucial role in capturing the non-local nature of PDEs. They are defined by truncating the basis expansion, a definition for neural operators 
inspired by 
\citet{lanthaler2023nonlocal}. 
In this context, $K^{(l)}_N$ are finite-rank operators with rank $N$.

Using the neural operator, designed to efficiently approximate the solution operator to our elliptic PDEs, we have a variant that approximately solves FBSDEs just as efficiently.  This next deep learning model is the main object studied in this paper.
{
In what follows, we will make use of the following map $S_{\gamma}: W^{s+(d+1)/2,2}(\partial \mathcal{D}; \mathbb{R}) \to W^{s, \infty}(\mathcal{D}; \mathbb{R})$ sending boundary data to domain data, and defined for each $g \in W^{s+(d+1)/2,2}(\partial \mathcal{D}; \mathbb{R})$ by 
\begin{equation}
\label{eq:Sgamma_boundary_to_domain_operator}
S_{\gamma}(g)  \eqdef  w_{g},
\end{equation}
where {$w_{g} \in W^{s + \frac{d + 2}{2},2}(\mathcal{D}; \mathbb{R}) \subset W^{s, \infty}(\mathcal{D}; \mathbb{R})$} is the unique solution of 
\[
\add{\mathcal{L}}
w_{g} = 0 \ \mathrm{in} \ \mathcal{D}, \quad 
w_{g}=g \ \mathrm{on} \ \partial \mathcal{D}.
\]
}

\subsection{{The BSDE Build: Forward-Backwards Neural Operator}}
\label{s:Background__ss:NOBuilds___sss:FBSDE}
We may now formally define the neural operator model illustrated by Figure~\ref{fig:FBNO}.  Briefly, the build is simply the convolutional NO build of Definition~\ref{def:neural-operator} evaluated at the random path of the forward diffusion process, parameterized by a stochastic differential equation (SDE)\footnote{Where, here, we mean SDEs in the path-dependant and possibly non-Markovian sense, due to the presence of the factor $\beta_{\cdot}$ in the forward equation~\eqref{eq:FBSDE_ForwardProcess}.}. Our theoretical guarantees focus on the case where the NO is evaluated using the diffusion process in~\eqref{eq:FBSDE_ForwardProcess}. Still, our build is defined for general diffusion processes and could be deployed as such.

Under Assumption~\ref{ass:StrongNovikov} the ``stochastic adapter'' in Figure~\ref{fig:FBNO} is meaningfully well-defined as follows.  

\begin{definition}[Stochastic Adapter]
\label{defn:StochasticAdapter}
Fix $d\in \mathbb{N}_+$, $s>0$, $1<p<\infty$, a domain $\mathcal{D}\subseteq \mathbb{R}^d$, and a $d$-dimensional $\mathbb{F}$-adapted process $\beta_{\cdot}\eqdef (\beta_t)_{t\ge 0}$ satisfying Assumption~\ref{ass:StrongNovikov}.  
Let $X_{\cdot}^x$ be a strong solution to the SDE~\eqref{eq:FBSDE_ForwardProcess} (with initial condition $x\in \mathcal{D}$).  
The $(X_{\cdot},\beta_{\cdot})$-stochastic adapter is the map
\[
\mathcal{A}_{X_{\cdot},\beta_{\cdot}}:
W^{s,p}(\mathcal{D},\mathbb{R})
\to 
\mathcal{S}_T^2 \times \mathcal{H}_T^2
\]
sending any $u\in W^{s,p}(\mathcal{D},\mathbb{R})$ to the pair
\[
    \mathcal{A}_{X_{\cdot},\beta_{\cdot}}(u)
=
\big(
        \Upsilon^{-1}_t\,u(X_t^x)
    ,
        \Upsilon^{-1}_t\,(
            (\nabla u)(X_t^x)\,\sqrt{2\,\gamma(t,X_t^x)}
            -
            u(X_t^x)\,\beta_t^{\top}
        )
    \big)
,
\]
where $\Upsilon_{\cdot}\eqdef (\Upsilon_t)_{t\ge 0}$ is the Dol\'{e}ans--Dade exponential of $\beta_{\cdot}$, defined for every $t\ge 0$ by
\[
    \Upsilon_t
\eqdef
    \exp\Bigl(
        -
            \int_0^t 
                \beta_s  
            dW_s
        - 
            \tfrac12\,
            \int_0^t 
                \|\beta_s\|^2 
            ds
    \Bigr)
.
\]
\end{definition}

We provide several examples of stochastic adapters, tailored to a range of BSDEs and motivated by a diverse set of applications in Subsection~\ref{s:Examples__ss:stockadapters}.
The forward process can be implemented using standard SDE discretizations (e.g.\ Euler--Maruyama or tamed Euler schemes) under the usual conditions.

Our neural operator with structure-informed inductive bias is defined as the composition of our convolutional neural operator (Definition~\ref{def:neural-operator}) and the stochastic adapter above.
\begin{definition}[Forward-Backwards Neural Operator (FBNO)]
\label{defn:FBNO}
Fix $d,L,W,N\in \mathbb{N}_+$, $T,s>0$, $1<p<\infty$, a (non-empty open bounded) domain $\mathcal{D}\subseteq \mathbb{R}^d$, and $\varphi$ as in Definition~\ref{def:neural-operator}.  
Fix processes $X_{\cdot}^x$ and $\beta_{\cdot}$ as in Definition~\ref{defn:StochasticAdapter}, and let $\mathcal{A}_{X_{\cdot},\beta_{\cdot}}$ be the $(X_{\cdot},\beta_{\cdot})$-stochastic adapter.

A forward-backwards neural operator (FBNO) is a map
\[
\hat{\Gamma}:
W^{s,p}(\mathcal{D};\mathbb{R})\times W^{s + \frac{d+1}{2},2}(\partial \mathcal{D}; \mathbb{R})
\to 
\mathcal{S}^2_T\times \mathcal{H}^2_T
\]
in the class $\mathcal{FBNO}_{N,\varphi}^{L,W}$ if it admits the representation
\[
        \hat{\Gamma}(f_0,g)
    =
            \mathcal{A}_{X_{\cdot},\beta_{\cdot}}
        \circ 
            \Gamma
,
\]
where $X_{\cdot}^x$ \add{is as in~\eqref{eq:FBSDE_ForwardProcess}} and 
$\Gamma\in \mathcal{NO}^{\tilde{L}, \tilde{W}, \operatorname{ReQU}}_{\tilde{N}, \varphi}$ with $\tilde{L}\le L$, $\tilde{W}\le W$, and $\tilde{N}\le N$.
\end{definition}
\noindent 
We are now ready to formulate our the main contributions of our manuscript.

\section{Main Results}
\label{s:Main_Result}
We are now in place to present our main results.

For each $s,\delta>0$, and a compact set $K\subseteq \mathbb{R}^d$ such that $\mathcal{D}\subseteq K$, consider the class of terminal conditions and perturbations to the drivers to our FBSDEs belonging to the following class
\begin{equation}
\label{eq:pertubations_terminal_conditions}
        \mathcal{X}_{s,\delta,K}
    \eqdef
        \big\{
            (f_0,g)\in 
            W^{s,\infty}_0(\mathcal{D};\mathbb{R})\times W^{s + \frac{d+1}{2},2}(\partial \mathcal{D}; \mathbb{R})
            :\,
                \|f_0\|,\|g\|\le \delta^2
                ,
                \operatorname{supp}(f_0)
                \subseteq K
        \big\}
.
\end{equation}

{
\subsection{Simultaneously Solving Infinite Families of FBSDE}
\label{s:Main__ss:FBSDEs}
The following is our primary contribution, 
}
showing that the solution operator for the family of { decoupled} FBSDEs~\eqref{eq:FBSDE_ForwardProcess}-\eqref{eq:FBSDE} with random terminal condition~\eqref{eq:RandomTerminalCondition} and {indexed by} $(f_0,g)\in \mathcal{X}_{s,\delta,K}$ can be efficiently approximated by neural operators; for appropriate $s$ and $\delta$.  Recall that $\Gamma^{\star}$ denotes the solution operator to the family of BSDEs in~\eqref{eq:FBSDE_ForwardProcess}-\eqref{eq:FBSDE}-\eqref{eq:RandomTerminalCondition}
indexed by $(f_0,g)\in \mathcal{X}_{s,\delta,K}$\footnote{Cf.~\eqref{eq:Representation_as_PDE_Sol} for existence on $\mathcal{X}_{s,\delta,K}$.}.

\begin{theorem}[\add{Polynomial Scaling for Structured Families of BSDEs}]
\label{thrm:Main_Stochastic}
\hfill\\
Let $s \in \mathbb{N}$ such that $s > 4 + \lceil \frac{d}{p} \rceil$ and suppose that Assumptions~\ref{ass:GreensSymmetry}
,~\ref{ass:semilinear-term},~\ref{ass:p-p-prime},~\ref{ass:sigma_s_k_p__regularity}, and~\ref{ass:choice-delta-p} \add{and~\ref{ass:StrongNovikov}} hold. 
{For every ``convergence rate'' $r > 0$ there is an integer $k\in \mathcal{O}(1/r)$ and sufficiently smooth and compactly supported wavelet frames $\varphi$ of $L^p(\mathcal{D}^k)$, respectively\footnote{These wavelet frames are respectively defined in Lemmata~\ref{lem:expansion-Green} and~\ref{lem:wavelet-sob-rate-app}.},} 
such that:
\hfill\\
For any approximation error $0<\epsilon <1$ and each time horizon $T>0$, there are $L,W,N \in \mathbb{N}$, and a FBNO $\hat{\Gamma} \in \mathcal{FBNO}_{N,\varphi}^{L,W,\operatorname{ReQU}}$ satisfying
\[
        \sup_{(f_0,g) \in \mathcal{X}_{s,\delta,K}}\,
            \mathbb{E}\biggl[
                \sup_{0\le t \le T\wedge\tau }\,
                    \|
                        \hat{\Gamma}(f_0,g)_t 
                        - 
                        \Gamma^{\star}(f_0,g)
                    \|
            \biggr]
    \add{\leq}
        \varepsilon,
\]
\hfill\\
{Moreover, the depth, width, rank, and domain lifting dimension of the FBNO are recorded in Table~\ref{tab:complexity_estimates__FBNO}.}
\end{theorem}

\begin{table}[H]
    \centering
    \begin{tabular}{cccc}
    \toprule
     \textbf{Depth}   & \textbf{Width} & \textbf{Rank} & \textbf{Domain lifting Dimension} \\
    \midrule 
       $
       \mathcal{O}\big(
        \log (1/\epsilon)
       \big)
       $  & $\mathcal{O}(1)$ & 
       {
       $
       \mathcal{O}\big(
            \varepsilon^{- \frac{1}{r}}
       \big)
       $
       }
       &
       $\mathcal{O}(1/r)$
    \\
    \bottomrule
    \end{tabular}
    \caption{\textbf{Complexity Estimates for the FBNO in Theorem~\ref{thrm:Main_Stochastic}:} Where the big $\mathcal{O}$ suppresses constants depending only on $s, \sigma, p,\mathcal{D}, d, \alpha, \mathcal{L}$
    .}
    \label{tab:complexity_estimates__FBNO}
\end{table}

As we see from Table~\ref{tab:complexity_estimates__FBNO}, the \textit{domain lifting channels} implemented by the neural operator allow for acceleration of the NO convergence rate.  In particular, very high-dimensional lifts of the physical domain allow for highly sub-linear convergence rates of the NO rank,  This answers an open question looming in the NO community, namely: ``Why are lifting channels so important in practice while they do not appear in (qualitative) universal approximation theorem for NOs?''.  
As our analysis suggests, see Lemma~\ref{lem:wavelet-sob-rate-app}, efficient rates may not even be possible without such mappings when the target Banach space is a Sobolev space of high regularity.

\subsection{Simultaneously Solving Infinite Families of Semilinear Elliptic PDEs}
\label{s:Main__ss:SemilinearPDEs}

{
We now formulate the PDE version of our above, main, and simultaneous decoupled FBSDE result.  We recall that the family of semilinear Elliptic boundary value problems associated to our family of FBSDEs are given by~\eqref{eq:semilinear}-\eqref{eq:semilinear:BC} with source and boundary conditions indexed by~\eqref{eq:pertubations_terminal_conditions}.  We recall that the deterministic variant of our NO build is the neural operator with optional domain lifting (Definition~\ref{def:neural-operator}); the optionality of the domain lifting will be discussed shortly.
Recall that $\Gamma^{+}$ denotes the solution operator to the family of semilinear elliptic PDEs in~\eqref{eq:semilinear}--\eqref{eq:semilinear:BC}
indexed by $(f_0,g)\in \mathcal{X}_{s,\delta,K}$\footnote{Cf.\ Corollary~\eqref{cor:wellposedness} for existence on $\mathcal{X}_{s,\delta,K}$.}.

\begin{theorem}[\add{Polynomial Scaling for Structured Families of Semilinear Elliptic PDEs}]
\label{thrm:Main_EllipticPDESemilinear}
Fix $s, \sigma \in \mathbb{N}$.
Under Assumptions~\ref{ass:GreensSymmetry}
,~\ref{ass:semilinear-term},~\ref{ass:p-p-prime},~\ref{ass:sigma_s_k_p__regularity}, and~\ref{ass:choice-delta-p}, 
there exists a wavelet frame $\varphi$ (as in Lemma~\ref{lem:expansion-Green}) 
such that: 
for any $0<\epsilon <1$, there are $L,W,N \in \mathbb{N}$, and a neural operator $\Gamma \in \mathcal{NO}^{L,W, \operatorname{ReQU}}_{N,\varphi}(W^{s,p}(\mathcal{D};\mathbb{R})^{2}, W^{s,p}(\mathcal{D};\mathbb{R}))$ satisfying
\begin{equation}
\label{eq:semilinear_estimate}
    \sup_{f_0,g}\,
        \big\|
            \Gamma^{+}(f_0,g) - \Gamma(f_0, S_{\gamma}(g)) 
        \big\|_{W^{s, p}(\mathcal{D};\mathbb{R})} 
\leq 
    \epsilon
.
\end{equation}
where the supremum is taken over all $(f_0,g)$ in $
B_{W^{s,\infty}_0(\mathcal{D};\mathbb{R})}(0, \delta^2) 
\times 
B_{W^{s + (d+1)/2, 2}(\partial \mathcal{D}; \mathbb{R})}(0, \delta^2)$.
Moreover, we have the following estimates for parameters $L=L(\Gamma)$, $W=W(\Gamma)$, and $N=N(\Gamma)$:
\begin{equation}
\label{eq:semilinear_estimate-rate}
    L(\Gamma) \leq C \log (\epsilon^{-1}),
    \mbox{ }
    W(\Gamma) \leq C,
    \mbox{ and }
    N(\Gamma)  \leq C \epsilon^{-\frac{s+\lceil \frac{d}{p} \rceil+1+\sigma}{2kd}},
\end{equation}
\hfill\\
where $C>0$ is a constant depending on $s, \sigma, k, p,\mathcal{D}, d, \alpha, \mathcal{L}$.
\end{theorem}
Note that, in Theorem~\ref{thrm:Main_EllipticPDESemilinear} above, $N(\Gamma)  \leq C \epsilon^{-\frac{1}{2pk}}$ and $1/p<1$, by Assumption~\ref{ass:p-p-prime}, therefore the convergence of the rank is at-least 
\begin{equation}
\label{eq:convergence_rank_general}
    N(\Gamma)  
\add{\leq C} 
    \epsilon^{-\mathcal{O}\big(\frac1{2k}\big)}
.
\end{equation}
As we will shortly see, when domain lifting can be avoided, which is possible in PDE applications but not necessarily for FBSDE applications, the rate of $\mathcal{O}(\varepsilon^{-1/2})$ is achievable.  
}

{
The high Sobolev regularity of our approximation enables the use of the Sobolev embedding theorem, which, when combined with Theorem~\ref{thrm:Main_EllipticPDESemilinear} yields uniform approximation of each solution and its higher-order derivatives for all semilinear elliptic PDEs in the family~\eqref{eq:semilinear}–\eqref{eq:semilinear:BC}.
\begin{corollary}[{Uniform Approximation Rates for the Family of Semilinear PDEs~\eqref{eq:semilinear}-\eqref{eq:semilinear:BC}}]
\label{cor:Cs_Approx}
In the setting of Theorem~\ref{thrm:Main_EllipticPDESemilinear} if $s\ge \big\lceil \frac{d}{p}\big\rceil + 1$ then, the neural operator $\Gamma$ satisfying~\eqref{eq:semilinear_estimate} \add{and ~\eqref{eq:semilinear_estimate-rate}} also satisfies the uniform estimates
\[
            \sup_{f_0,g}\,
        \big\|
            \Gamma^+(f_0,g) - \Gamma(f_0, S_{\gamma}(g)) 
        \big\|_{C^{s(p,d)}(\bar{\mathcal{D}})}
\add{\leq}
    \epsilon
\]
where the supremum is taken over all $(f_0,g)$ in 
$
B_{W^{s,\infty}_0(\mathcal{D};\mathbb{R})}(0, \delta^2) 
\times 
B_{W^{s + (d+1)/2, 2}(\partial \mathcal{D}; \mathbb{R})}(0, \delta^2)$ and where $s(p,d) \eqdef s-\big\lceil \frac{d}{p} \big\rceil-1\ge 0$.
\end{corollary}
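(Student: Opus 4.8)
The plan is to deduce Corollary~\ref{cor:Cs_Approx} directly from Theorem~\ref{thm:semilinear} by a single application of the Morrey--Sobolev embedding theorem; no new operator needs to be constructed, and the very same $\Gamma \in \mathcal{CNO}^{L,W,\operatorname{ReQU}}_{N,\varphi}$ furnished by Theorem~\ref{thm:semilinear} (with the same depth, width, and rank bounds) is used verbatim. The one analytic ingredient is that, since $\mathcal{D}$ is a bounded domain with $C^1$ (in particular Lipschitz) boundary by Assumption~\ref{ass:NiceBoundary}\term[(i)]{foo-placeholder}, it is a Sobolev extension domain, so the continuous embedding $W^{s,p}(\mathcal{D};\mathbb{R}) \hookrightarrow C^{s(p,d)}(\bar{\mathcal{D}})$ holds whenever $s - s(p,d) > d/p$; see e.g.~\cite[Section 5.6.3]{EvansPDEBook_2010}. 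With the standing hypothesis $s \ge \lceil d/p\rceil + 1$ and the definition $s(p,d) = s - \lceil d/p\rceil - 1$, one has $s - s(p,d) = \lceil d/p\rceil + 1 > d/p$ and $s(p,d) \ge 0$, so the embedding applies (Morrey's inequality in fact also supplies a Hölder modulus for the top-order derivatives, but we only record the weaker $C^{s(p,d)}$ statement). Write $C_{\mathrm{emb}} = C_{\mathrm{emb}}(s,p,d,\mathcal{D}) > 0$ for the norm of this embedding.

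First I would fix $0 < \epsilon < 1$ and invoke Theorem~\ref{thm:semilinear} to obtain $\Gamma$ satisfying~\eqref{eq:semilinear_estimate}. For every admissible pair $(f_0,g) \in B_{W^{s,\infty}_0(\mathcal{D};\mathbb{R})}(0,\delta^2) \times B_{W^{s+(d+1)/2,2}(\partial\mathcal{D};\mathbb{R})}(0,\delta^2)$, set $h_{f_0,g} \eqdef \Gamma^{+}(f_0,g) - \Gamma(f_0, S_{\gamma}(g))$. Both summands lie in $W^{s,p}(\mathcal{D};\mathbb{R})$: the exact solution $\Gamma^{+}(f_0,g)$ does by the elliptic regularity established in the course of proving Theorem~\ref{thm:semilinear} (indeed it enjoys higher regularity there), and $\Gamma(f_0, S_{\gamma}(g)) \in W^{s,p}(\mathcal{D};\mathbb{R})$ by the very definition of $\mathcal{CNO}$ in Definition~\ref{def:neural-operator}. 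Hence $h_{f_0,g} \in W^{s,p}(\mathcal{D};\mathbb{R})$, and the embedding inequality gives the pointwise-in-$(f_0,g)$ bound $\|h_{f_0,g}\|_{C^{s(p,d)}(\bar{\mathcal{D}})} \le C_{\mathrm{emb}}\,\|h_{f_0,g}\|_{W^{s,p}(\mathcal{D};\mathbb{R})}$.

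Combining this with the uniform bound~\eqref{eq:semilinear_estimate} and taking the supremum over all admissible $(f_0,g)$ yields
\[
\sup_{f_0,g}\,\big\| \Gamma^{+}(f_0,g) - \Gamma(f_0, S_{\gamma}(g)) \big\|_{C^{s(p,d)}(\bar{\mathcal{D}})} \le C_{\mathrm{emb}}\, \sup_{f_0,g}\,\big\| \Gamma^{+}(f_0,g) - \Gamma(f_0, S_{\gamma}(g)) \big\|_{W^{s,p}(\mathcal{D};\mathbb{R})} \le C_{\mathrm{emb}}\,\epsilon,
\]
which is exactly the claimed estimate, with $C_{\mathrm{emb}}$ absorbed into $\lesssim$ (it depends only on $s,p,d,\mathcal{D}$, matching the constants already hidden in Theorem~\ref{thm:semilinear}). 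There is no substantive obstacle; the only points deserving care are (i) spending the smoothness budget correctly, i.e.\ checking that under $s \ge \lceil d/p\rceil + 1$ the integer $s(p,d) = s - \lceil d/p\rceil - 1$ is nonnegative and is the natural Hölder order delivered by $W^{s,p}$ in dimension $d$ (covering separately the cases $d/p \notin \mathbb{Z}$ and $d/p \in \mathbb{Z}$ in Morrey's theorem), and (ii) recalling that the $C^1$-boundary hypothesis in Assumption~\ref{ass:NiceBoundary} is precisely what licenses applying the embedding on the bounded domain $\mathcal{D}$ (via a Stein-type extension operator) rather than only on $\mathbb{R}^d$.
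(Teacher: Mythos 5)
Your proof is correct and follows essentially the same route as the paper's: both pass from the $W^{s,p}(\mathcal{D})$ estimate~\eqref{eq:semilinear_estimate} of Theorem~\ref{thm:semilinear} to the $C^{s(p,d)}(\bar{\mathcal{D}})$ estimate by a single application of the Sobolev (Morrey) embedding on the bounded $C^1$ domain $\mathcal{D}$, with the embedding constant absorbed into $\lesssim$. If anything, your version is cleaner, since the paper's proof of the corollary incongruously invokes $s > 4 + \lceil d/p\rceil$ (the stronger hypothesis of Theorem~\ref{thrm:Main}) whereas you correctly work from the corollary's own hypothesis $s \ge \lceil d/p\rceil + 1$ and verify $s - s(p,d) = \lceil d/p\rceil + 1 > d/p$.
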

}

\section{Examples: When Polynomial Scaling Holds}
\label{s:Examples}

This section provides a series of examples and discussions that clarify the scope of the various components of our setup. The goal is to highlight the range of the “special structure” identified by our main results, under which the number of trainable NO parameters scales \textit{polynomially} in the reciprocal approximation error. Our main point is that, while any such ``special structure'' is necessarily less general than the minimal conditions required to invoke a universal approximation theorem for NOs (cf.~\cite{lanthaler2022error,galimberti2022designing,lanthaler2023operator,lanthaler2023nonlocal}) \textit{if not} the information-theoretic lower-bounds of~\cite{lanthaler2025curse} implying exponential scaling in the worst-case would be \textit{violated}, our identified structural conditions isolated in this manuscript are comparatively transparent and easy to interpret.

\subsection{Examples of PDE Facets}
\label{s:Examples__ss:PDE}

We now provide examples illustrating several facets of our theory and of the NO architecture, focusing on the PDE component of our problem. We begin by highlighting problem structures that satisfy our regularity assumptions, and then turn to concrete examples of Green’s functions and the resulting PDE-informed convolutional layers.
\subsection{Examples of Generators}
\label{s:Examples__ss:Generators}
We now provide a few simple examples of generators satisfying Assumption~\ref{ass:semilinear-term}.
\begin{example}[Generators Which are Polynomial and Vanishing Near the Origin]
\label{ex:alpha_poly_local_compact_z}
Suppose that $\mathcal{D}=\{x\in \mathbb{R}^d:\, \|x\|_2< 1\}$.
Let $\varphi\in C_c^{\infty}(\mathcal{D})$ be a smooth cutoff in $x$.
Consider a smooth function $\eta:\mathbb{R}\to\mathbb{R}$ supported on $[-2,2]$ and with value identically $1$ on $[\tfrac{-3}{2},\tfrac{3}{2}]$; for instance take $\eta$ to be
\[
    \eta(z)
\eqdef
        I_{|z|\le \tfrac{3}{2}}
    +
        \frac{
            \exp\!\big(-\tfrac{1}{4-|z|^2}\big)
        }{\exp\!\big(-\tfrac{1}{4-|z|^2}\big)+\exp\!\big(-\tfrac{1}{|z|^2-\tfrac94}\big)}
    \,
        I_{\tfrac{3}{2}<|z|<2}
.
\]
Fix $\sigma<t<2\sigma$ where $\sigma>s+\lceil \frac{d}{p} \rceil + 1$. 
For any $f\in C^{\infty}(\mathbb{R})$ which is constant on $(-\tfrac{3}{2},\tfrac{3}{2})$, and any polynomial $p:\mathbb{R}\to \mathbb{R}$ satisfying $p(0)=p'(0)=0$, the map
\begin{equation}
\label{eq:crude}
    \alpha(x,z) = \varphi(x)\,f(z)\,p(z)\eta(z)
\end{equation}
satisfies Assumptions~\ref{ass:semilinear-term}~(i)-(iv).
\hfill\\
\noindent
If, rather, $H\in\mathbb{N}_+$ with $H\ge 3$ and $f_2,\dots,f_H\in C_c^{\infty}(\mathcal{D})$ then the map 
\begin{equation}
\label{eq:refined}
    \alpha(x,z) = \eta(z)\Big( f_2(x)z^2 + \dots + f_H(x)z^H\Big)
\end{equation}
satisfies all of Assumptions~\ref{ass:semilinear-term}~(i)-(iv).
\end{example}

\subsection{Examples of the Green's Function For Our Dirichlet Problem}
\label{s:Examples__ss:GreensFunctionDirichletProblem}
We elucidate the scope of Assumption~\ref{ass:GreensSymmetry} by providing a few simple classes of examples.  The aim is not generality, but transparency.
We recall that the role of the Dol'{e}ans--Dade exponential used in our stochastic decoder is to remove the dependence on the non-Markovian factor $\beta_{\cdot}$ common to each member of our family of BSDEs in~\eqref{eq:FBSDE_ForwardProcess}--\eqref{eq:FBSDE}, thereby reducing the non-Markovian BSDE problem to a Markovian one whose solution is described by a PDE after a change of variables not depending on $\beta_{\cdot}$, and not a path-dependent PDE (PPDE).

\begin{example}[Non-Divergence Form with $1^{rst}$ and $0^{th}$ Order Terms]
\label{ex:prototype}
Let $\mu : \mathcal{D} \to \mathbb{R}^d$ and $\lambda :\mathcal{D} \to \mathbb{R}$ are smooth function, and $\gamma \in P_d^+$ is a constant positive definite matrix.  Consider the differential operator $\mathcal{L}$ given for every $u\in C_c^{\infty}(\mathbb{R}^d)$ by
\[
    \mathcal{L}u 
= 
        \lambda(x)
    +
        \mu(x)^{\top}\nabla u 
    - 
        \nabla \cdot (\gamma \nabla u) 
\]
where $x\in \mathbb{R}^d$.  The Green function of $\mathcal{L}$ satisfies Assumption~\ref{ass:GreensSymmetry}, as it is decomposable as
$
G_{\mathcal{L}}(x,y)
= 
\Phi_{\mathcal{L}}(x, y)
+
\Psi_{\mathcal{L}}(x,y)
$
where $\Phi_{\mathcal{L}} \in C^{\infty}(\mathcal{D}^2)$ and $\Phi_{\mathcal{L}} :\mathbb{R}^{d} \times \mathbb{R}^d \to \mathbb{R}$ is given in closed-form as
\[
\Phi_{\mathcal{L}}(x,y)
=
\left\{
\begin{array}{ll}
\mathrm{det}(\gamma)^{-1/2} C_d 
\|\gamma^{-1/2}\,(x-y)\|^{2-d} & d \geq 3 
\\
\mathrm{det}(\gamma)^{-1/2} C_2 
\log( \|\gamma^{-1/2}\,(x-y)\|^{2})
         & d=2
\end{array}
\right.
\]
where $\Phi_{\mathcal{L}}\in W^{s,p}_0(\mathcal{D};\mathbb{R})$.  
\end{example}
Before moving on, we exhibit a broad family of Elliptic generators $\mathcal{L}$ satisfying Assumption~\ref{ass:GreensSymmetry} which arise from SDEs with well-structured drift.  We choose to exhibit a family of SDEs whose Green's function can additionally be explicitly showcased in closed-form.
\begin{proposition}[{Closed-Form Expressions Generalizing~\cite{cao2024expansion}}]
\label{prop:L_expansion}
Let $\mathcal{D}\subset\mathbb{R}^d$ be a smooth domain with $d\ge 3$, let 
$\gamma\in P_d^+$ be a constant symmetric positive-definite matrix, and let 
$h\in C^2(\mathcal{D},(0,\infty))$ satisfying the ``generalized harmonic condition'' %
\begin{equation}
\label{eq:gen_harmonicity}
    \operatorname{div}(\gamma\nabla h)=0
.
\end{equation}
Then, the differential operator $\mathcal{L}$ defined for every $u\in C_c^\infty(\mathcal{D})$ and each $x\in \mathcal{D}$ by
\[
    (\mathcal{L}u)(x)
    \eqdef 
        h(x)^{-1}\,
        \operatorname{div}\bigl(\gamma\nabla(h u)(x)\bigr)
\]
is the generator of a diffusion process 
$X_{\cdot}\eqdef (X_t)_{t\ge 0}$ solving the SDE
\begin{equation}
\label{eq:Generator_of_DoobhTransformX}
        X_t 
    = 
        x 
    + 
        \int_0^t
            \,
             \underbrace{2\,\gamma\nabla \log h(X_s)}_{\eqdef \mu(X_s)}
             \,
             ds
    +
        \int_0^t
            \sqrt{2\gamma}\,
            dW_s,
\end{equation}
for some fixed $x\in \mathcal{D}$ and every $t\ge 0$; i.e.\ $\mu(x)=2\gamma \nabla \log h(x)$.
\hfill\\
Furthermore, the Green's function associated to $\mathcal{L}$ is given for every $x,y\in \mathcal{D}$ by
\begin{equation}
\label{eq:GreensFunction_h__Doob}
        G_{h,\gamma}(x,y)
    =
        \underbrace{
            \frac{h(y)}{h(x)}\,\Psi_{\mathcal{L}}(x,y)
        }_{\text{Regular Part: }\Psi_{h,\gamma}}
    +
        \underbrace{
            \frac{h(y)}{h(x)}\,
            \frac{C_d}{
                \operatorname{det}(\gamma)^{1/2}
                \|\gamma^{1/2}(x-y)\|^{d-2}
            }
        }_{\text{Singular Part:} \Phi_{h,\gamma}}
        ,
\end{equation}
where $G_{\mathcal{L}}$ is as in~\eqref{eq:GreensFunction}, $\Psi_{\mathcal{L}}\in C^{\infty}(\mathcal{D}^2)$ and 
$C_d =
\Gamma(1+d/2)\,(d(d-2)\pi^{d/2})^{-1}>0$.
\hfill\\
Moreover, $
    \Psi_{h,\gamma}(x,y)\eqdef \frac{h(y)}{h(x)}\,\Psi_{\mathcal{L}}(x,y)
$ belongs to $C^{\infty}(\mathcal{D}^2)$ 
$
    \Phi_{h,\gamma}(\cdot,y)\eqdef \frac{h(y)}{h(\cdot)}\,
        \frac{C_d}{
            \operatorname{det}(\gamma)^{1/2}
            \|\gamma^{1/2}(\,\cdot-y)\|^{d-2}
        }
$, belongs to $ W^{1,p'}_{\operatorname{loc}}(\mathcal{D})$ for every $1\le p'<d/(d-1)$; where $x,y\in \mathcal{D}$.
\end{proposition}
Before moving on, we provide some simple examples of the former. 
\begin{example}[Drift-less Case]
\label{ex:nodrift}
When $h=1$ then, $\mu(x)=0$ and we recover the divergence form Elliptic operator $\mathcal{L}=\nabla \cdot \gamma \nabla$ considered in \cite{cao2024expansion, gilbarg1977elliptic}.  In which case, the singular part in~\eqref{eq:GreensFunction} collapses to 
\begin{equation}
\label{eq:GreensFunction__SingularPart}
       \Phi_{\mathcal{L}}(x)
    =
        \frac{C_d}{
            \operatorname{det}(\gamma)^{1/2}
            \|\gamma^{1/2}\,x\|^{d-2}
        }
\end{equation}
where, again, $\Phi_{\mathcal{L}}$ is the fundamental solution on $\mathbb{R}^d$ for the Laplace equation, and $\Psi_{\mathcal{L}}$ is the solution that makes the value of the sum $0$ at the boundary.  
Note that, in this case, the stronger condition $\partial_x \Phi_{\mathcal{L}}=-C\partial_y \Phi_{\mathcal{L}}$ holds, for some constant $C>0$.
\end{example}
\begin{example}[Diagonal Diffusion Matrix and Harmonic Drift]
\label{ex:DiagDiff}
In the setting of Proposition~\ref{prop:L_expansion}
If $\gamma = \lambda I_d$ for some $\lambda>0$ then $h$ must be harmonic; i.e.\ $\Delta h(x)=0$ for every $x\in \mathcal{D}$.  In this case, the singular part of the Green's function simplifies to 
\begin{equation}
\label{eq:GreensFunction__SingularPart__diagonal}
       \Phi_{\mathcal{L}}(x)
    =
        \tfrac{
        h(x)
        }{h(y)}
        \frac{C_d}{
            \|x\|^{d-2}
        }
.
\end{equation}
For instance, if $d=3$, and $\mathcal{D}\subseteq \{x\in \mathbb{R}^d:\, x_1^2-x_2^2>0\}$ then, $h((x_1,x_2,x_3))=x_1^2-x_2^2$ is an example of a harmonic function; in this case, 
\[
        \mu(x)
    =
        \frac{4}{x_1^2 - x_2^2}\,\gamma\,(x_1,\,-x_2,\,0)^{\top}
.
\]  
If $h_1(x)=\|x\|^{2-d}$ or 
$h_2(x)=\|x\|^k\, p_k\,\big(\tfrac{x}{\|x\|_2}\big)$
are harmonic whenever $k\in\mathbb{N}_+$ and $p_k$ is a spherical harmonic on 
$\mathbb{S}^{d-1}\eqdef\{x\in\mathbb{R}^d:\,\|x\|_2=1\}$.  
If, moreover, $d=3$ then $Y_2(x)=x_1^2 + x_2^2 - 2x_3^2$ is a valid choice of $p_2$, and thus yields a valid $h$.  
\end{example}
Importantly, in these examples, $\Phi_{\mathcal{L}}$ is explicitly available in closed-form convolution against it can be easily implemented in closed-form on any standard computational software (thus, it is natural to include it as a part of the deep learning model).  Furthermore, since the remainder $\Psi_{\mathcal{L}}$ is smooth, then it can be efficiently approximated via a wavelet expansion (as shown in Lemmata~\ref{lem:Sparse_bais_approximation_smooth_function_wkp-app} and~\ref{lem:wavelet-sob-rate-app}), and thus, our neural operator can easily approximately implement it.  We emphasize that approximation of the singular part $\Phi_{\mathcal{L}}$ is generally impossible due to the singularity near the origin; thus, its availability in a simple closed-form expression is essential.

We close this section by proving an explicit examples of a convolutional layer of our neural operator.  A range of other examples can be derived using the other examples of Green's functions decompositions in this section.
\begin{example}[A Specific Instance of the Convolutional Layer]
\label{ex:convol__prop:L_expansion}
In the setting of Proposition~\ref{prop:L_expansion}, the convolutional operator $K^{(\ell)}$ in the definition of the neural operator layer~\eqref{eq:NOLayers__K_fin_rank} is then given as a linear transformation of the componentwise convolution against the singular part of the Green's function
\[
    K^{(\ell)}(v)(x)
=
    C^{(\ell)}\,
    \biggl(
        \tfrac{C_d}{\operatorname{det}(\gamma)^{-1/2}}
    \int_{x\in \mathcal{D}}\,
        h(x)
        \,
        \tfrac{v_1(u)}{h(u)\,
        {
        \|\gamma^{1/2}(x-u)\|^{d-2}
        }
        }
        \,
        du
    \biggr)_{i=1}^{d_{\ell}}
\]
for any $d_{\ell},d_{\ell+1}\in \mathbb{N}_+$, each $v\in L^1(\mathcal{D};\mathbb{R}^{d_{\ell}})$, and for some (trainable) $d_{\ell+1}\times d_{\ell}$-matrix $C^{\ell}$.
\end{example}
\subsection{Examples of Stochastic Analytic Facets}
\label{s:Examples__ss:BSDE}
We now provide examples illustrating several components of the stochastic-analytic part of our theory. In particular, we present explicit processes $\beta_{\cdot}$ satisfying our strong Novikov condition, together with the stochastic adapters they induce. We emphasize simple, tractable cases in which the resulting expressions can be written in closed form.
\subsubsection{Examples of Non-Markovian Factors \texorpdfstring{$\beta_{\cdot}$ in Equation~\ref{eq:FBSDE}}{}}
\label{s:Examples__ss:BSDE___sss:NonMarkFact}
Two simple examples of non-Markovian $\beta_{\cdot}$ satisfying our Strong Novikov condition in Assumption~\ref{ass:StrongNovikov} are the following:
\begin{example}[General Bounded Predictable Factor]
\label{ex:StrongNovikov__bounded}
Let $\beta$ be $d$-dimensional predictable and assume $\|\beta_t\|\le M$ for all $t\in[0,T]$ a.s.  Then
\[
\mathbb{E}\biggl[\exp\Big(\tfrac12(p^2+p)\int_0^T \|\beta_t\|^2\,dt\Big)\biggr]
\le
\exp\big(\tfrac12(p^2+p)M^2T\big)
<\infty,
\]
so Assumption~\ref{ass:StrongNovikov} holds.
\end{example}
A concrete, deep learning-flavoured, instance of Example~\ref{ex:StrongNovikov__bounded} is the following.  For simplicity, we consider a Markovian example in state-feedback form.
\begin{example}[MLP-Paramterized Factor in State Feedback Form]
\label{ex:StrongNovikov__HSigMLP}
Let $X$ be an $\mathbb{R}^m$-valued adapted process and set $\beta_t\eqdef \operatorname{Sig}\circ f_\theta(t,X_t)$, where $f_\theta:\mathbb{R}^d\to \mathbb{R}^d$ is a $\operatorname{ReQU}$-MLP.  Then, $\|\beta_t\|\le \sqrt d$, hence Assumption~\ref{ass:StrongNovikov} holds by Example~\ref{ex:StrongNovikov__bounded} with $M=\sqrt d$ and the sigmoid function is $\operatorname{Sig}(t)\eqdef \tfrac{1}{1+e^{-t}}.
$
\end{example}

\subsubsection{Examples and Motivation for Stochastic Adapters}
\label{s:Examples__ss:stockadapters}
Before providing a range of examples of stochastic adapters, we motivate its construction by a slightly non-standard Feynman-Kac-type representation of the BSDE considered herein.

To best frame the role of the stochastic adapter, we first present a non-Markovian extension of the Feynman–Kac theorem building on~\cite{pardoux1998backward}. Our approach uses a Girsanov transform similar to the one underlying the closed-form solution of linear BSDEs (LBSDEs) in~\citep[Proposition 2.2]{el1997backward}. Instead of reducing to a martingale representation that requires iterated conditional expectations, and is therefore computationally heavy, we use a slightly different change of measure to reduce our BSDE to another BSDE that admits a Markovian representation via the main result of~\cite{pardoux1998backward}. This yields a solution method that combines the benefits of both the LBSDE and Markovian frameworks: we obtain closed-form solutions for a broad class of non-Markovian BSDEs that strictly contains the Markovian class in~\cite{pardoux1998backward}, while still retaining the elliptic PDE link from~\cite{pardoux1998backward} up to an explicit change of measure encoded in the process $\Upsilon_{\cdot}\eqdef (\Upsilon_t)_{t\ge 0}$ defined in the following result.
\begin{proposition}[A Non-Markovian Feynman-Kac-Type Representation]
\label{prop:FK_TypeResult}
Fix a probability measure $\mathbb{P}$.
Suppose that, under $\mathbb{P}$, $X_{\cdot}$ is a strong solution to the SDE
\begin{equation}
\label{eq:SDE_X__originalgeneral}
X_t 
= 
    x
    + \int_0^t \big( 
        \mu(s,X_s) + \gamma(s,X_s)\,\beta_s 
    \big)\,ds 
    + \int_0^t \gamma(s,X_s)\,dW_s
\end{equation}
for globally Lipschitz $\mu:\mathbb{R}^{1+d}\to \mathbb{R}^d$ and $\gamma:\mathbb{R}^{1+d}\to P_d^+$.
Let $\beta_{\cdot}\eqdef (\beta_t)_{t\ge 0}$ be a $d$-dimensional $\mathbb{F}$-predictable process which satisfies Assumption~\ref{ass:StrongNovikov}.
Under Assumptions~\ref{eq:Growth_alpha} 
also, consider the Dol\'{e}ans-Dade exponential
\begin{equation}
\label{eq:GirsanovMartingale}
    \Upsilon_t 
    = 
        \exp\biggl(
            -\sum_{i=1}^d\, \int_0^t\, (\beta_s)_i \,dW_s^i
            -
            \tfrac{1}{2}\,\int_0^t\, \|\beta_s\|^2\,ds
        \biggr),\qquad t\ge0,
\end{equation}
which is a $\mathbb{P}$-a.s.\ non-negative valued martingale. In particular, if we define $\mathbb{Q}$ on $\mathcal{F}_\tau$ by
\begin{equation}
\label{eq:RN_density}
    \frac{d\mathbb{Q}}{d\mathbb{P}}\Big|\mathcal{F}_\tau = \Upsilon_{\tau},
\end{equation}
then the process $W^{\mathbb{Q}}_{\cdot}\eqdef (W^{\mathbb{Q}}_t)_{t\ge 0}$ defined for every $0\le t\le \tau$ by
$
    W^{\mathbb{Q}}_t\eqdef W_t + \int_0^t\, \beta_s\,ds
$ 
is a $\mathbb{Q}$-Brownian motion on $[0,\tau]$, and the solution to the BSDE with random terminal time $\tau$ 
\begin{align}
\label{eq:BSDE_perturbed__X}
X_t
&  = x+ \int_0^t \bigl( \mu(s,X_s) + \gamma(s,X_s)\,\beta_s \bigr)\,ds + \int_0^t \gamma(s,X_s)\,dW_s,
\\
\label{eq:BSDE_perturbed__Y}
    Y_t 
& = 
    \tfrac{
        g(X_{\tau}) 
    }{
        \Upsilon_{\tau}
    }
+
        \int_{t\wedge \tau}^{\tau}\, 
        \Big(
            \alpha(X_s,Y_s)
            +
            f_0(X_s)
            +
            \beta_s\cdot Z_s
        \Big)
        \,ds 
    - 
        \int_{t\wedge \tau}^{\tau}Z_s 
        \,
        dW_s
\end{align}
is given for every $0\le t\le \tau$ by
\begin{align}
\label{eq:Y_COM}
    Y_t & = \Upsilon_t^{-1}\,u(X_t),
\\
\label{eq:Z_COM}
    Z_t & = \Upsilon_t^{-1}\,\big[
            (\nabla u)(X_t)\,\gamma(t,X_t) - u(X_t)\beta_t^{\top}
        \big],
\\
\label{eq:Gamma_COM}
    \Upsilon_t & = 
        \exp\biggl(
            -\int_0^t\, \beta_s \cdot dW_s
            -
            \tfrac{1}{2}\,\int_0^t\, \|\beta_s\|^2\,ds
        \biggr).
\end{align}
\end{proposition}
Note that, when $\beta_{\cdot}$, then the previous result collapses to the nonlinear Feynman-Kac representation in~\cite{pardoux1998backward}.  Now, informed by Proposition~\ref{prop:FK_TypeResult}, we may now formalize our stochastic adapter.

We provide broad families of examples of stochastic adapters, primarily parameterized by $\beta_{\cdot}$ and by $X_{\cdot}$, which admit a relatively simple form.  Our illustrations focus on a setting where the process $\beta_{\cdot}$ is Markovian and the Dol\'{e}ans-Dade exponential is almost Markovian but remains easy to compute; in particular, it requires no explicit conditioning, unlike linear BSDE solutions; cf.~\cite[Proposition 2.2]{el1997backward}. 
Each of the following examples is a special case of Proposition~\ref{prop:markov_form} in our appendix.
Fix constants $\lambda\ge 0$, non-zero $C\in \mathbb{R}$, and a twice continuously differentiable real-valued function $h$ on $\mathbb{R}^d$ satisfying
\begin{equation}
\label{eq:gen_armonic}
\Delta h(x) = 2\lambda h(x)
\end{equation}
for all $x\in \mathbb{R}^d$.  Consider the Markovian $\mathbb{F}$-predictable process defined by
\begin{equation}
\label{eq:control__beta}
        \beta_t
    =
        Ce^{-\lambda t}
        \,
        \nabla h(W_t)
.
\end{equation}

\begin{example}[Linear Effect of Brownian Motion]
\label{eq:constant}
If $\lambda =0$ then we may set $h(x)=\sum_{i=1}^d\, x_i$ for all $x\in \mathbb{R}^d$.  In which case, $h$ satisfies~\eqref{eq:gen_armonic} and $\nabla h(x)=\mathbf{1}_d \in \mathbb{R}^d$ is the vector of $1$s.  Consequently, the process $\beta_{\cdot}$ defined by~\eqref{eq:control__beta} simplifies to
$
\beta_t = C\,\mathbf{1}_d
$ 
for every $t\ge 0$.  In this case, Proposition~\ref{prop:markov_form} shows that the Dol\'{e}ans-Dade exponential $\Upsilon_{\cdot}$ is given for every $t\ge 0$ by
\begin{equation*}
        \Upsilon_t
    =
        \underbrace{
        \exp\Big(
            -C\,W_t
        \Big)
        }_{\text{Markovian Part}}
        \,\,
        \underbrace{
            \exp\Big(
                -\tfrac{C^2}{2}
                \,
                \int_0^t\,
                    \,
                    \tfrac{
                        \|\nabla h(W_s)\|^2
                    }{e^{2\lambda s}}
                ds
            \Big)
        }_{\text{Brownian Path Integral Part}}
.
\end{equation*}
\end{example}

\begin{example}[Harmonic Functions]
\label{eq:constant__harmonic}
In the setting of Example~\ref{eq:constant}.  If $h$ is instead taken to be harmonic, then it satisfies~\eqref{eq:gen_armonic}.  
\end{example}

\begin{example}[Modified Helmholtz Solutions]
\label{ex:gen_harmonic}
If $\lambda=\tfrac{1}{2}$, then~\eqref{eq:gen_armonic} is a three-dimensional Helmholtz equation with imaginary wave-number. If we look for \textit{radial} solutions, i.e.\ $h(x)=u(r)$ where $r=\|x\|$, then solutions to~\eqref{eq:gen_armonic} correspond to solutions of the ODE
$
\partial_r^2 u(r) + \tfrac{d-1}{r} \partial_r u(r) = u(r)
$, 
which are 
\begin{equation}
\label{eq:radial_solutions}
    h(x) 
=
    u(\|x\|)
=
    C_1 I_{\nu}(\|x\|)
    +
    C_2 K_{\nu}(\|x\|)
,
\end{equation}
where $I_\nu$ and $K_\nu$ are the modified Bessel functions of the first and second kind respectively with parameter $\nu = \tfrac{d-2}{2}$, and $C_1,C_2\in \mathbb{R}$ be such that $\nabla h(0)=0$.
The gradient of $h$ and $\beta_{\cdot}$ are given by
\begin{equation*}
    \nabla h(x)
=
\biggl(
C_1\Big( I_{\nu-1}(\|x\|) - \frac{\nu
\,
I_\nu(\|x\|)
}{\|x\|}  \Big)
+
C_2\Big( -K_{\nu-1}(\|x\|) - \frac{\nu
\,
 K_\nu(\|x\|)
}{\|x\|} \Big)
\biggr)
\,
\frac{x}{\|x\|}
\mbox{ and }
\beta_t = \tfrac{\nabla h(W_t)}{e^{t/2}}
\end{equation*}
where $x\in \mathbb{R}^d$, $t\ge 0$, and $\nu-1= \tfrac{d-4}{2}$.
\hfill\\
Then Proposition~\ref{prop:markov_form} shows that the Dol\'{e}ans-Dade exponential $\Upsilon_{\cdot}$ is given for every $t\ge 0$ by
\begin{equation*}
        \Upsilon_t
    =
        \underbrace{
        \exp\Big(
            -Ce^{-\tfrac{t}{2}}\,h(W_t)
        \Big)
        }_{\text{Markovian Part}}
        \,\,
        \underbrace{
        \exp\big(
            -\tfrac{C^2}{2}
            \,
            \int_0^t\,
                \,
                \tfrac{
                    \|\nabla h(W_s)\|^2
                }{e^{2\lambda s}}
            ds
        \big)
        }_{\text{Brownian Path Integral Part}}
.
\end{equation*}
Note that, in this case $\Upsilon_t$ need not be Markovian, and thus, neither must $Y_{\cdot}$ and $Z_{\cdot}$ be given by our Feynman-Kac-type representation in Proposition~\ref{prop:FK_TypeResult}.
\hfill\\
For instance, if $d=3$, then $\nu=\tfrac{1}{2}$, $I_{1/2}(r) = \sqrt{\tfrac{\pi}{2r}} \,\sinh(r)
K_{1/2}(r) = \sqrt{\tfrac{\pi}{2r}} \,e^{-r}$; whence, $h$ of the form~\eqref{eq:radial_solutions} can be written as $
    h(x) = (A e^{\|x\|} + Be^{-\|x\|})/\|x\|
$
for some $A,B\in \mathbb{R}$ where $A+B=0$; thus, $\beta_0=0$ and for every $t> 0$ we have
\[
        \beta_t
    =
        \frac{
            \big(
                A \,e^{\|W_t\|}(\|W_t\|-1)
            -
                B \,e^{-\|W_t\|}(\|W_t\|+1)
            \big)
        }{\|W_t\|^3}
        \,W_t
.
\]
\end{example}

\section{Discussion}
\label{s:Discussion}
Before turning to the technical proofs, we briefly indicate (i) what the role of the domain lifting channels are and (ii) how, and in which settings, the BSDE structure we study arises in applications. 
\subsection{The Role Domain Lifting Channels}
We begin by explaining the role of our variant of the classical lifting channels, namely our \textit{domain lifting channels}.  These domain lifting channels are designed to balance the two opposing forces in Assumption~\ref{ass:sigma_s_k_p__regularity}.  
One the one hand, Assumption~\ref{ass:sigma_s_k_p__regularity} (i), is required to employ the Sobolev Embedding Theorem to uniformly approximate the solution and its derivatives for each member of each semilinear PDE, in the family~\eqref{eq:semilinear}-\eqref{eq:semilinear:BC} \textit{given a higher order Sobolev approximation}.  This constraint prefers low-dimensional representations, so no domain lifting ($k=1$) would be preferable.  
\hfill\\
This stands in contrast to Assumption~\ref{ass:sigma_s_k_p__regularity} (ii), which is needed for the higher order Sobolev norms approximations of each semilinear PDE, in the family~\eqref{eq:semilinear}-\eqref{eq:semilinear:BC}, via the Jackson-Bernstein type our technical Lemma~\ref{lem:Sparse_bais_approximation_smooth_function_wkp-app} and its lifted variant in Lemma~\ref{lem:wavelet-sob-rate-app}.  This direction pushes for a high-dimensional version of the problem, which pushes for a high-dimensional lift ($k\gg 1$) to be satisfied.

\paragraph{Can we do without domain lifting channels?}
If we do not leverage \textit{domain lifting} channels, i.e.\ by setting $k=1$ and $s=0$, then the conclusion remains still remains valid with~\eqref{eq:semilinear_estimate} now weakened to
\begin{equation}
\label{eq:semilinear_estimate__weak}
    \sup_{f_0,g}\,
        \|
            \Gamma^{+}(f_0,g) - \Gamma(f_0, S_{\gamma}(g)) 
        \|_{L^{p}(\mathcal{D};\mathbb{R})} 
\leq 
    \epsilon
\end{equation}
with the supremum is still taken over all $(f_0,g)$ in $
B_{L^{\infty}(\mathcal{D};\mathbb{R})}(0, \delta^2) 
\times 
B_{W^{(d+1)/2,2}(\partial \mathcal{D}; \mathbb{R})}(0, \delta^2)$. 
Additionally, we emphasize that one may now set $r=1$ and obtain a rank of $\mathcal{O}(\varepsilon^{-1})$ in this weaker form because $\frac{\lceil \frac{d}{p} \rceil+1+\sigma}{2d} < 1$ by Assumption~\ref{ass:sigma_s_k_p__regularity}.
\hfill\\
Nevertheless, all this comes at a cost as the \textit{newly identified} ability of NOs to solve \textit{stochastic problems} such as FBSDE problems, guaranteed by Theorem~\ref{thrm:Main_Stochastic}, may not hold without domain lifting; nevertheless, this impact our conclusion showing that NOs can efficiently solve families of semilinear Elliptic PDEs of the form in~\eqref{eq:semilinear}-\eqref{eq:semilinear:BC}.  
In this case, following the discussion circa~\eqref{eq:convergence_rank_general}, the rank of the convolutional NO approximation to the solution operator to these PDE can grow no larger than
\begin{equation}
\label{eq:convergence_rank_specialized}
        N(\Gamma)  
\del{\lesssim \mathcal{O}} 
\add{\le C} \varepsilon^{-1/2}
.
\end{equation}

We now discuss our proofs, the key insight here will be to show that convolutional NO layers can implement a fixed point iteration converging to the unique solution of each semilinear Elliptic PDE in the considered family.  The FBNO will then leverage~\cite{pardoux1998backward} PDE formulation of our decoupled FBSDEs to obtain an approximate solution.  In this way, our proof shows that our architecture encodes the deep mathematical structure behind both the families of PDEs and FBSDEs which we are considered.

\subsection{The Structure of BSDEs in Applications}
The decoupled FBSDEs in~\eqref{eq:FBSDE_ForwardProcess}-\eqref{eq:FBSDE} arise in several applications, such as hedging~\cite[Section 1.2]{el1997backward}, when $\beta$ is non-zero and possibly non-Markovian; in this case, the full version of our NO architecture is required, relying on a Feynman-Kac-type representation (Proposition~\ref{prop:FK_TypeResult}) which combines solution techniques used to solve linear BSDEs, cf.~\cite[Proposition 2.2]{el1997backward}, with the PDE representation of~\cite{pardoux1998backward} in order to avoid the need to iteratively compute conditional expectations.
\hfill\\
\noindent 
For illustrative purposes, if we force $\beta_{\cdot}$
to be constant $\beta$ is constant; i.e. 
if $\beta_t \in \mathbb{R}^d$
$\mathbb{P}$-a.s. then is noting more than a ``Brownian tilting of $g(X_{\tau})$'' in that
$
        \tfrac{
            g(X_{\tau})
            }{\Upsilon_{\tau}}
    =
        g(X_{\tau})
        \,
        \exp(\bar{\beta}^{\top}W_{\tau} - \tfrac{\|\bar{\beta}\|^2t}{2})
$, where for every $t\ge 0$ 
$\Upsilon_t$ is a log-normal random variable such that $\log(\Upsilon_t^{-1})$ has normal distribution.

\add{
It is worth highlighting that the special sub-case where $\beta_{\cdot}=0$ and $\Upsilon_{\cdot}=1$ is relevant in several risk-management and economic applications to \textit{mathematical finance} such as unilateral \textit{credit valuation adjustment}~\cite{henry2017deep,gregory2012ccr,brigo2009bilateral} which have become a core component of the counterparty credit risk (CCR) operations of the risk management departments of most major banks following the Basel III accord, cf.~\cite{bcbs2011basel3}.  
Such FBSDEs additionally arise in economics when modelling continuous-time, recursive utility under consumption uncertainty~\cite{epstein2013substitution,DuffieEpstein1992SDU,DuffieEpstein1992AssetPricing}.  
Although in general, these may be described at first glance by quadratic BSDEs, cf.~\citep[Equation (1.20)]{el1997backward}, several broad families of consumption and utility profiles, e.g.~\citep[Examples 1.1 and 1.2]{DuffieEpstein1992AssetPricing}, or separable utilities%
\footnote{\add{
In the notation of~\citep[page 16]{el1997backward}, this occurs whenever the utility function $U:\mathbb{R}^2 \to \mathbb{R}$ is ``separable'', in that it is of the form $U(c,y)\eqdef \phi(c)+\psi(y)$, where $\phi,\psi$ are concave, Lipschitz, and $\phi$ is non-decreasing.
It also holds for “regularized variants’’ of the classical Kreps utility that satisfy the required Lipschitz conditions for a $Z_{\cdot}$-free generator in the recursive utility; i.e.
$
U(c,y)\eqdef \tfrac{\beta}{\rho}\bigl(c^{\rho}(y+\nu)^{1-\rho} - y\bigr)
$
for hyperparameters $0<\rho<1$ and $\beta,\nu>0$.
If $\rho=1$, then the standard Kreps utility yields a $Z_{\cdot}$-free recursive utility as it is Lipschitz.
}
}~
then the generator of the recursive utility does not depend on $Z_{\cdot}$; i.e.\ $\beta_{\cdot}=0$ and $\Upsilon_{\cdot}=1$.
We also mention that these types of BSDEs have applications in fully-dynamic risk measures via little-g expectations; even when when $\beta_{\cdot}$ is only a non-zero deterministic linear factor; cf.~\citep[Example 10 and 14]{di2024fully}.
}

\section{Proofs}
\label{s:Proof}

We will prove Theorem~\ref{thrm:Main_Stochastic} in \add{two} steps
First, we show that our convolutional variant (Definition~\ref{def:neural-operator}) of the classical neural operator model can approximate the solution operator to the semiliinear elliptic Dirichlet problem associated to our family of FBSDEs relying on the connection between FBSDEs and semilinear Elliptic PDEs due in~\cite{pardoux1998backward}.  This is the content of Theorem~\ref{thrm:Main_EllipticPDESemilinear}, which is our first main technical approximation theorem.
Finally, we combine the estimates in Steps $1$ and $2$ to deduce that our proposed FBNO model (Definition~\ref{def:neural-operator}) can approximate the solution operator the family of FBSDEs.  Since each step of the approximation only requires a logarithmic increase in the complexity of each component of the FBNO to achieve a linear increase in approximation accuracy; then, we conclude that the entire model converges to the solution operator of the FBSDE at an \textit{exponential rate}.

\subsubsection{Approximation of Polynomials by ReQU-ResNets}
\label{s:Auxiliary_UAT_ReQU Networks}
Our next step relies on the ability to exactly implement polynomial maps with ReQU-ResNets.  This intermediate technical chapter confirms that this is indeed the case.

\begin{definition}[Residual Neural Network]
\label{defn:ResLayer}
Let $d,D\in \mathbb{N}_+$ and $\sigma \in C(\mathbb{R})$.  
A residual neural network is a map $f:\mathbb{R}^d\to\mathbb{R}^D$ admitting the iterative representation
\begin{equation}\label{eq:resnet}
    \begin{aligned}
        f(x) & = W^{(L)} x^{(L-1)}
        +b^{(L)},\\
     x^{(l+1)}& \eqdef \sigma \bullet\big(W^{(l)}\,x^{(l)} + b^{(l)}\big) + W_s^{(l)}x^{(l)},
        \qquad
        \mbox{ for } l=0,\dots,L-1,
    \\
    x^{(0)} & \eqdef  x,
    \end{aligned}
\end{equation}
where for $l=0,\dots,L$, $W^{(l)},W_s^{(l)}$ are $d_{l+1}\times d_l$-matrices and $b^{(l)}\in \mathbb{R}^{d_{l+1}}$, $d_0=d$, $d_L=D$, and $\bullet$ denotes componentwise composition.
\hfill\\
The integers $L$ and $\max_{l=0,\dots,L}\,d_l$ are respectively called the depth and width of $f$.
\end{definition}
We will focus on the case of the rectified quadratic unit (ReQU) activation function $\sigma=\max\{0,x\}^2$.  We call any residual neural network with ReQU activation function a ReQU-ResNet.   The addition of a $W_s^{(l)}x^{(l)}$ factor in the representation of~\eqref{eq:resnet} is called a skip connection, or shortcut, and its addition is known to smooth out the loss landscape of most neural network models, see the main results of~\cite{riedi2023singular}, thus stabilizing their training.  This is one of the reasons why the incorporation of skip connections in deep learning models has become standard practice.  Our interest in the residual connection is that ReQU-ResNets can exactly implement any polynomial function (with zero loss globally on $\mathbb{R}^d$).

\begin{proposition}[Exact Implementation of Multivariate Polynomials by ReQU-ResNets]
\label{prop:ReQU_ResNetImplementation}
Let $d,k\in \mathbb{N}$ with $d>0$, $\alpha^1,\dots,\alpha^k\in \mathbb{N}_+^d$ be multi-indices, $p\in \mathbb{R}[x_1,\dots,x_d]$ be a polynomial function on $\mathbb{R}^d$ with representation $\sum_{i=1}^k\, C_i\prod_{j=1}^d\, x_j^{\alpha^i_j} + b$, where $C_1,\dots,C_d,b\in \mathbb{R}$.  There is a ReQU-ResNet $\phi:\mathbb{R}^d\to \mathbb{R}$ with width at-most $\mathcal{O}(dk+\sum_{i=1}^k\, |\alpha^i|)$ and depth $
\mathcal{O}\big(dk+\sum_{i=1}^k\,|\alpha^i|\big)
$ exactly implementing $p$ on $\mathbb{R}^d$; i.e.\ for all $x\in \mathbb{R}^d$
\[
    \phi(x) = p(x)
\]
\end{proposition}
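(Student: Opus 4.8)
The plan is to build the polynomial up from monomials, exploiting two elementary algebraic facts about ReQU. First, squaring is free: since $\mathrm{ReQU}(t) + \mathrm{ReQU}(-t) = t^2$, a single ReQU-layer (width 2, depth 1) computes $t \mapsto t^2$ exactly, and more generally by polarization $st = \tfrac14\big(\mathrm{ReQU}(s+t) + \mathrm{ReQU}(-(s+t)) - \mathrm{ReQU}(s-t) - \mathrm{ReQU}(-(s-t))\big)$, so one ReQU-layer of width 4 computes the product of any two affine readouts of the current hidden state. Second, the skip connection lets me carry all previously computed quantities (the input coordinates $x_1,\dots,x_d$, and any partial products formed so far) forward unchanged through each layer, so nothing has to be recomputed.

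First I would treat a single monomial $\prod_{j=1}^d x_j^{\alpha_j}$ of total degree $|\alpha| = \sum_j \alpha_j$. Writing it as a product of $|\alpha|$ copies of the variables, I compute it by a left-to-right sequence of $|\alpha|-1$ binary multiplications: at each step the ReQU-layer (width $\le 4$ for the multiplication, plus identity channels via $W_s^{(l)}$ to retain $x_1,\dots,x_d$ and the running partial product) replaces the running product $P$ by $P\cdot x_j$ using the polarization identity above. This costs depth $O(|\alpha|)$ and width $O(d)$ for that monomial. Doing this for all $k$ monomials — either sequentially (stacking the blocks, carrying inputs through by skip connections) or in parallel (running the $k$ monomial-blocks side by side in disjoint channels) — gives depth $O\big(dk + \sum_i |\alpha^i|\big)$ and width $O\big(dk + \sum_i |\alpha^i|\big)$, matching the claimed bounds. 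Finally, the last layer $f(x) = W^{(L)}x^{(L-1)} + b^{(L)}$ is an affine map, so I take the linear combination $\sum_i C_i(\text{monomial}_i) + b$ in that output layer at no extra cost; one should double-check the edge cases where some $\alpha^i$ has a zero entry (the monomial simply omits that variable) or where $|\alpha^i|=1$ (no multiplication needed), and the degenerate case $k=0$.

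The main obstacle — really a bookkeeping obstacle rather than a conceptual one — is verifying that the width and depth accounting is honest once all the auxiliary channels are included: each multiplication block must simultaneously (i) carry forward the $d$ input coordinates, (ii) carry forward the current partial product(s), and (iii) allocate the $O(1)$ fresh ReQU units that perform the polarization, and one must confirm that routing these through the matrices $W^{(l)}, W_s^{(l)}, b^{(l)}$ of Definition~\ref{defn:ResLayer} genuinely stays within $O\big(dk + \sum_i |\alpha^i|\big)$ width and the same depth. I would also be careful that the polarization identity is implemented with affine pre-activations only (which it is, since $s\pm t$ are affine in the hidden state), so that each step is a legitimate ReQU-ResNet layer, and that the final count does not secretly hide a factor of $d$ per layer. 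Once the channel layout is fixed, exactness is immediate: every intermediate quantity equals the intended product identically on all of $\mathbb{R}^d$, with no approximation error, so $\phi(x) = p(x)$ for all $x$.
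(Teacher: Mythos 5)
Your proposal is correct and follows essentially the same route as the paper: both exploit that ReQU implements $t\mapsto t^2$ exactly in one layer, derive a width-$O(1)$ multiplication gadget from a squaring/polarization identity, build each monomial by iterated multiplication while the skip connections carry the input coordinates and the running partial product forward, and finish by linearly combining the monomials. The only cosmetic difference is the multiplication gadget (you use the four-term polarization identity $st=\tfrac14\big(\operatorname{ReQU}(s+t)+\operatorname{ReQU}(-s-t)-\operatorname{ReQU}(s-t)-\operatorname{ReQU}(t-s)\big)$, whereas the paper uses $xy=2\big(((x+y)/2)^2-(x/2)^2-(y/2)^2\big)$) and the way the $k$ monomial subnetworks are assembled (you stack them directly, sequentially or in parallel; the paper invokes a composition lemma of Cheridito et al.\ to formalize the same stacking) — neither changes the substance.
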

\begin{proof}
Observe that the function $f:\mathbb{R}\ni t \mapsto t^2\in [0,\infty)$ can be exactly implemented by the following ReQU network $\phi:\mathbb{R}\to \mathbb{R}$ with width $2$ and depth $1$
\[
    \phi:t\mapsto \max\{0,t\}^2 + \max\{0,-t\}^2 
    .
\]
Analogously to the proof of \cite[Proposition 4.1]{lu2021deep}, we note that the multiplication map $\mathbb{R}^2\ni (x,y)\mapsto xy \in \mathbb{R}$ can be written as $
2\big(
((x+y)/2)^2 
-
(x/2)^2
-
(y/2)^2
\big)
$.  Therefore, it can be exactly implemented by the following ReQU network $\phi_{\times}:\mathbb{R}^2\to \mathbb{R}$ with width $3$ and depth $1$ given by
\[
\phi_{\times}(x,y)\eqdef 
(2,2,2)\phi
\left(
\begin{pmatrix}
    1/2 & 1/2 \\
    -1/2 & 0 \\
    0 & -1/2
\end{pmatrix}
\begin{pmatrix}
    x\\
    y
\end{pmatrix}
\right)
.
\]
We instead implement the following family of maps.  For each $d\in \mathbb{N}_+$, consider the map $\mathbb{R}^{d+1}\ni x\mapsto (x_1x_2,x_3,\dots,x_d)\in \mathbb{R}^d$.
We find that map can be implemented by a ReQU-ResNet $\phi_{d+1:\uparrow,\times}:\mathbb{R}^{d+1}\to \mathbb{R}$ of width $d+3$ and depth $1$ with representation
\[
\begin{aligned}
        \phi_{d+1:\uparrow,\times}(x)
    & \eqdef 
            \Big(
                W_2
                \,
                \operatorname{ReQU}\bullet(W_1 x + 0_{d+1})
            \Big)
        +
            W_s \, x,
\end{aligned}
\]
where the matrices $W_1$, $W_2$, and $W_s$ are given by 
\[
            W_2
        \eqdef 
            ((2,2,2) \oplus 0_{d-1})|\mathbf{0}_{d-1,d+2}
    ,\,\,
            W_1
        \eqdef 
            \begin{pmatrix}
            1/2 & 1/2 \\
            -1/2 & 0 \\
            0 & -1/2
            \end{pmatrix}
            \oplus \mathbf{0}_{d-1}
    \mbox{ and }
        W_s\eqdef (0_1\oplus I_d)
,
\]
and where $\oplus$ denotes the direct sum of matrices, $A|B$ denotes their \textit{row-wise concatenation}, $\oplus$ the outer product of matrices/vectors and, for each $k,l\in \mathbb{N}_+$, $\mathbf{0}_{k,l}$ denotes the $k\times k$-zero matrix, $\mathbf{0}_k\eqdef \mathbf{0}_{k,k}$, $0_{k}\in \mathbb{R}^{k}$ is the zero vector, and $I_k$ denotes the identity matrix on $\mathbb{R}^k$.

Next, let $d\in \mathbb{N}_+$ let $\alpha=(\alpha_1,\dots,\alpha_d)$ be a multi-index in $\mathbb{N}^d$ with $|\alpha|>0$.  For each $k\in \mathbb{N}_+$ let $1_k\in \mathbb{R}^k$ denote the vector with all components equal to $1$ and $1_0\eqdef 0$.  Consider the monomial function $m_{\alpha}:\mathbb{R}^d\ni x \mapsto 
\prod_{i=1}^d\, x_i^{\alpha_i}
\in \mathbb{R}$ and consider the $
\tilde{\alpha}
\eqdef 
\big(
|\alpha| + \sum_{i=1}^d\, I_{\alpha_i=0}
\big)
\times d$-dimensional matrix $W$ given by
\[
        W 
    \eqdef 
        \oplus_{i=1}^d\,
            1_{\alpha_i}
    =
    \big( \dots ((1_{\alpha_1}\oplus 1_{\alpha_2})\oplus 1_{\alpha_3})\dots \big) \oplus 1_{\alpha_d},
\]
where we have emphasized the \textit{non-associativity} of the $\oplus_{i=1}^d\,1_{\alpha_i}$ operation; above.  
Consider the ReQU-ResNet $\phi_{|\alpha|}:\mathbb{R}^d\mapsto \mathbb{R}$ given by
\[
        \phi_{|\alpha|}:\mathbb{R}^d\mapsto \mathbb{R}
    \eqdef 
        \big(\bigcirc_{i=1}^{\tilde{\alpha}-1}\,
            \phi_{i:\uparrow,\times}
        \big)
            (
                Wx
            ),
\]
which, by construction, has width $3+\tilde{a} \in \mathcal{O}(|\alpha|)$, depth $\tilde{a}-1 = \mathcal{O}(|\alpha|)$, and equals to
\[
    \phi_{|\alpha|}(x) = \prod_{i=1}^d\, x_i^{\alpha_i},
\]
for each $x\in \mathbb{R}^d$.
We may now show that ReQU-ResNets may exactly implement polynomials, and we may explicitly express the complexity of any such network.
Let $p\in \mathbb{R}[x_1,\dots,x_d]$ (i.e.\ a polynomial function on $\mathbb{R}^d$) and suppose that $p$ is the sum of $k\in \mathbb{N}_0$ monomial functions $m_{\alpha^1},\dots,m_{\alpha^k}:\mathbb{R}^d\to \mathbb{R}$ with each $|\alpha^i|>0$ and a constant term $x\mapsto b$ for some $b\in \mathbb{R}$.  Then, the previous computation shows that $p$ can be written as
\begin{equation}
\label{eq:ReQU_ResNet}
    p(x)
= 
    \sum_{i=1}^d\,
        C_i\,\phi_{|\alpha^i|}(x)
    +b,
\end{equation}
for each $x\in \mathbb{R}^d$; where the empty sum is defined to be $0$.  

Note that by rescaling the last layer of each $\phi_{|\alpha^i|}$ by a multiple of $C_i$, we may, without loss of generality, assume that $C_i=1$.
We also observe that the ReQU-ResNet $x\mapsto \mathbf{0}_{d}\operatorname{ReQU}\bullet(\mathbf{0}_dx +0_d) + I_d x$ implements the identity map on $\mathbb{R}^d$.  Thus, for all intents and purposes,  ReQU-ResNets have the $2$-identity requirement of~\cite[Definition 4]{cheridito2021efficient}; consequentially, a mild modification of~\cite[Proposition 5]{cheridito2021efficient} implies that there exists a ReQU-ResNet $\phi_p:\mathbb{R}^d\to \mathbb{R}$ implementing the sum of the ReQU-ResNets $\sum_{i=1}^d\,\phi_{|\alpha^i|}(\cdot)+b$ of depth at-most $\sum_{i=1}^k\, \big((\tilde{\alpha}^i+1)+1\big)
\in 
\mathcal{O}\big(\sum_{i=1}^k\, |\alpha^i|\big)
$ and width at-most $d(k-1) + (3+\sum_{i=1}^k\,\tilde{\alpha}^i)\in \mathcal{O}(dk+\sum_{i=1}^k\,|\alpha^i|)$.
This concludes our proof.
\end{proof}

For completeness, we add the following Stone-Weirestrass-Bernstein-type universal approximation guarantee, which is a direct consequence of Proposition~\ref{prop:ReQU_ResNetImplementation} as well as the multivariate version of the constructive proof to Weirestrass' approximation theorem using the multi-variate version of the Bernstein polynomials.  The rates which we derive are not optimal, but we simply want to show how one easily deduces universality from Proposition~\ref{prop:ReQU_ResNetImplementation}.
\begin{corollary}[ReQU-ResNets are Universal Approximators]
\label{cor:UAT_ReQUResNets}
Let $f\in C([0,1]^d;\mathbb{R})$ be continuous and let $\omega:[0,\infty)\to [0,\infty)$ be a modulus of continuity for $f$ on $[0,1]^d$.  For every $n\in \mathbb{N}_+$,
there is a ReQU-ResNet $\phi:\mathbb{R}^d\to \mathbb{R}$ with width $\mathcal{O}((dn)^2)$ and depth $\mathcal{O}\big((dn)^2\big)$ satisfying
\begin{equation*}
        \max_{x\in [0,1]^d}\,
            \big|
                \phi(x)-f(x)
            \big|
    \le 
        (1+d/4) \, \omega(n^{-1/2})
.
\end{equation*}
\end{corollary}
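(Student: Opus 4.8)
The plan is to combine Proposition~\ref{prop:ReQU_ResNetImplementation}, which says that any multivariate polynomial can be implemented \emph{exactly} by a ReQU-ResNet whose width and depth are controlled by the number of monomials and the total degree, with the classical constructive proof of Weierstrass' theorem via multivariate Bernstein polynomials. The multivariate Bernstein polynomial of $f\in C([0,1]^d;\mathbb{R})$ of order $n$ is
\[
    B_n[f](x)
    \eqdef
    \sum_{k_1=0}^n\cdots\sum_{k_d=0}^n
        f\!\left(\tfrac{k_1}{n},\dots,\tfrac{k_d}{n}\right)
        \prod_{j=1}^d \binom{n}{k_j} x_j^{k_j}(1-x_j)^{n-k_j}.
\]
This is a polynomial in $x_1,\dots,x_d$, so by Proposition~\ref{prop:ReQU_ResNetImplementation} there is a ReQU-ResNet $\phi$ with $\phi\equiv B_n[f]$ on all of $\mathbb{R}^d$. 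It therefore suffices to (i) bound the approximation error $\max_{x\in[0,1]^d}|B_n[f](x)-f(x)|$ in terms of the modulus of continuity $\omega$ and (ii) read off the width and depth of $\phi$ from the structure of $B_n[f]$.

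For step (i), I would use the standard Popoviciu/Bernstein estimate in the multivariate form: since $\prod_j\binom{n}{k_j}x_j^{k_j}(1-x_j)^{n-k_j}$ is a probability mass function (a product of $d$ independent Binomial$(n,x_j)$ laws scaled by $1/n$), one writes $B_n[f](x)-f(x)=\mathbb{E}[f(\xi)-f(x)]$ where $\xi=(\xi_1,\dots,\xi_d)$ has independent coordinates with $\mathbb{E}\xi_j=x_j$ and $\mathrm{Var}(\xi_j)=x_j(1-x_j)/n\le 1/(4n)$. Using subadditivity of $\omega$ (replacing $\omega$ by its least concave majorant if needed, which only changes constants), $|f(\xi)-f(x)|\le \omega(\|\xi-x\|_1)\le \sum_{j=1}^d\omega(|\xi_j-x_j|)$, and then $\mathbb{E}[\omega(|\xi_j-x_j|)]\le \omega\big(\mathbb{E}|\xi_j-x_j|\big)\le\omega\big(\sqrt{\mathrm{Var}(\xi_j)}\big)\le\omega(1/(2\sqrt n))$ by concavity and Jensen. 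Summing over $j$ gives a bound of the shape $d\,\omega(n^{-1/2})$ up to a factor, matching the claimed $(1+d/4)\,\omega(n^{-1/2})$ after the usual bookkeeping with the $\tfrac14$ variance bound; the precise constant is routine and I would not grind through it.

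For step (ii), $B_n[f]$ has at most $(n+1)^d$ monomials, each of multidegree $|\alpha|\le nd$ (the $(1-x_j)^{n-k_j}$ factors expand into monomials of degree $\le n$ in $x_j$). Plugging $k=(n+1)^d$ monomials and $\sum_i|\alpha^i|\le k\cdot nd$ into the width and depth bounds of Proposition~\ref{prop:ReQU_ResNetImplementation} would give a bound exponential in $d$, which is \emph{worse} than the claimed $\mathcal{O}((dn)^2)$. So the naive route does not give the stated rate; instead I would exploit the \emph{tensor-product} structure of $B_n[f]$: it is a sum of products of $d$ univariate polynomials of degree $\le n$, and one can build it by a Horner-type scheme that reuses the one-dimensional powers $x_j,x_j^2,\dots,x_j^n$. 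Each univariate power tower costs $\mathcal{O}(n)$ width and depth, done in parallel over the $d$ coordinates costs $\mathcal{O}(dn)$ width, and the contraction of the tensor indices can be arranged in $\mathcal{O}(dn)$ further layers; multiplying partial products pushes the width to $\mathcal{O}((dn)^2)$ and likewise the depth, which is exactly the claim. Concretely I would either (a) cite that $B_n[f](x)=\prod_{j=1}^d\big(\text{univariate Bernstein operator in }x_j\big)$ applied to $f$ and implement the iterated one-dimensional interpolation, or (b) note that $B_n[f]$ restricted to each fiber is univariate of degree $n$ with $\le (n+1)$ coefficients that are themselves polynomials in the remaining variables, and recurse. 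The main obstacle is precisely this: getting the \emph{polynomial}-in-$d$ size bound rather than the exponential one that falls out of a black-box application of Proposition~\ref{prop:ReQU_ResNetImplementation}, i.e.\ one must use the product structure of the Bernstein kernel rather than treating $B_n[f]$ as a generic polynomial. Everything else — the error estimate, the ReQU implementation of scalar multiplication and squaring, the skip-connection bookkeeping — is already supplied by the preceding material.
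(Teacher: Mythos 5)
Your proposal takes a genuinely different route from the paper, and the difference is instructive. The paper's proof is a two-liner: it cites \cite[Proposition 48 and Definition 47]{kratsios2022universal} for the claim that the multivariate Bernstein polynomial $p$ associated to $f$ is ``comprised of $dn$ monomials each of degree at most $dn$'' and satisfies $\max_{x\in[0,1]^d}|p(x)-f(x)|\le (1+d/4)\,\omega(n^{-1/2})$, and then plugs $k = dn$ and $\sum_i|\alpha^i|\le (dn)^2$ into Proposition~\ref{prop:ReQU_ResNetImplementation} to read off width and depth $\mathcal{O}((dn)^2)$. You instead re-derive the error bound from scratch via the binomial-probability picture (which is fine, though you leave the factor $(1+d/4)$ unresolved), and then you correctly observe that the \emph{fully expanded} tensor-product Bernstein polynomial has up to $(n+1)^d$ monomials, so a black-box application of Proposition~\ref{prop:ReQU_ResNetImplementation} would be exponential in $d$; you propose a Horner/tensor-contraction implementation as a workaround. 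That workaround is a legitimate idea, but you only sketch it, so your proof is not complete as written. The paper avoids the whole issue by taking as given a monomial count of only $dn$ from the cited reference and applying the very ``naive route'' you dismiss; your proposal therefore identifies a point of potential tension worth flagging (the standard multivariate Bernstein polynomial, after expansion, does not have only $dn$ monomials), but in terms of the comparison, you are constructing a more elaborate circuit-level argument where the paper simply invokes a citation, and you re-prove an error estimate that the paper cites. If the cited monomial count is accurate for the construction used in \cite{kratsios2022universal}, the paper's proof is strictly shorter; if not, your tensor-product scheme is the repair one would need, and it would be worth writing it out carefully (tracking how the partial products and skip connections keep both width and depth at $\mathcal{O}((dn)^2)$) rather than leaving it at the level of a plan.
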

\begin{proof}
By~\cite[Proposition 48]{kratsios2022universal}, there exists a degree $n$ polynomial $p:\mathbb{R}^d\to \mathbb{R}$, namely the multivariate Bernstein polynomial associated to $f$ (see~\cite[Definition 47]{kratsios2022universal}), comprised of $dn$ monomials each of degree \textit{at-most} $dn$ such that 
\begin{equation}
\label{eq:Bernstein}
        \max_{x\in [0,1]^d}\,
            \big|
                p(x)-f(x)
            \big|
    \le 
        (1+d/4) \, \omega(n^{-1/2})
.
\end{equation}
Applying Proposition~\ref{prop:ReQU_ResNetImplementation}, we find that there is a ReQU-ResNet $\phi:\mathbb{R}^d\to \mathbb{R}$
with 
width at-most $
\mathcal{O}((dn)^2)
$ and depth $
\mathcal{O}\big((dn)^2\big)
$ satisfying $\phi(x)=p(x)$ for all $x\in \mathbb{R}^d$.  Combining this with the estimate in~\eqref{eq:Bernstein} yields the result.
\end{proof}

\begin{remark}
Approximation theorems for ReQU MLPs, with given depth and width quantifications, directly apply to ReQU-ResNets as the former class is contained in the latter.  
\end{remark}

\subsection{Step 3 - Approximation of the Solution operator to the semilinear elliptic equation}
\label{s:Proof__ss:SO_to_SemilinearElliptic}

The generator $\mathcal{L}$ of the forwards process $X_{\cdot}^x$, in~\eqref{eq:FBSDE_ForwardProcess}, is $\mathcal{L}$ \add{is a second order linear elliptic differential operator satisfying Assumption~\ref{ass:GreensSymmetry}.} 
The main result of \cite{pardoux1998backward}, which we rigorously apply later in Step $3$,  states that the solution to any of the FBSDEs considered herein is given by $\big(
u(X_{\cdot})
,
\nabla u(X_{\cdot})
\big)
$ where $u$ solves the semilinear elliptic PDE in~\eqref{eq:semilinear}-\eqref{eq:semilinear:BC}; which we recall is given by
\begin{align*}
\mathcal{L} u + \alpha(x,u) & = f_0 \ \mathrm{in} \ \mathcal{D},
\\
u& =g \ \mathrm{on} \ \partial \mathcal{D}
.
\end{align*}
Thus, we now construct an efficient approximation of the solution operator to~\eqref{eq:semilinear} using the convolutional NO model (Definition~\ref{def:neural-operator}).

We begin by unpacking Assumption~\ref{ass:GreensSymmetry}
.  Specifically, it guarantees that there exists the Green’s function $G_{\mathcal{L}} (x,y)$ for $\mathcal{L}$ with the Dirichlet boundary condition, that is, 
\[
\mathcal{L} \nabla G_{\mathcal{L}} (\cdot, y) 
= -\delta(\cdot,  y)\ \mathrm{in} \ \mathcal{D},
\]
\[
G_{\mathcal{L}} (\cdot, y) = 0  \ \mathrm{on} \ \partial \mathcal{D}.
\]

\noindent Using $G_{\mathcal{L}}(x,y)$, we may define an integral encoding of the elliptic problem (\ref{eq:semilinear})-(\ref{eq:semilinear:BC}) by:
\begin{equation}
\label{eq:integral-semilinear}
u(x)
=
\int_{\mathcal{D}}G_{\mathcal{L}}(x,y)\left[\alpha \bigl(y,u(y)\bigr) -f_0(y)\right]dy + w_{g}(x), \ x \in \mathcal{D},
\end{equation}
where $f_0 \in W^{s,\infty}_0(\mathcal{D}; \mathbb{R})$ and $w_{g} \in W^{s + \frac{d+2}{2},2}(\mathcal{D}; \mathbb{R})$ is the unique solution of 
\[
\mathcal{L} w_{g} = 0 \ \mathrm{in} \ D, \quad 
w_{g}=g \ \mathrm{on} \ \partial \mathcal{D},
\]
for any given $g \in W^{s + \frac{d+1}{2},2}( \partial \mathcal{D})$.
The Sobolev embedding theorem\footnote{{It is here where we use the domain lifting channels, substantiated by Lemma~\ref{lem:wavelet-sob-rate-app}, if we want to approximate the Green's function in $W^{s,p}(\mathcal{D})$ for $s>0$.  If, however, only an $s=0$ approximation is required; in which case, $W^{s,p}(\mathcal{D})=L^p(\mathcal{D})$ then no domain lifting channels are required.
}}%
, see e.g.~\cite[Section 5.6.3]{EvansPDEBook_2010}, guarantees the inclusion $W^{s + \frac{d + 2}{2},2}(\mathcal{D}) \subset C^{s + \frac{d + 2}{2} -\frac{d}{2}-1, \xi_0}(\overline{\mathcal{D}}) \subset W^{s, \infty}(\mathcal{D})$ (where $0<\xi_0 <1$ is a constant); implying that, $w_g \in W^{s, \infty}(\mathcal{D})$. 
By these remarks, we deduce that the following affine integral operator $T:W^{s,p}(\mathcal{D},\mathbb{R})\to W^{s,p}(\mathcal{D},\mathbb{R})$ by
\[
T(u)(x)
 \eqdef 
\int_{\mathcal{D}}G_{\mathcal{L}}(x,y)\left[\alpha \bigl(y,u(y)\bigr) -f_0(y)\right]dy + w_{g}(x),
\]
is well-defined; where, $x\in \mathcal{D}$.  Consider the following subset of $W^{s,p}(\mathcal{D},\mathbb{R})$ on which we will show that $T$ acts as a contraction
\[
X_{s,\delta}
 \eqdef \{
u \in W^{s, p}(\mathcal{D} ; \mathbb{R}) : \|u\|_{W^{s, \infty}(\mathcal{D} ; \mathbb{R})} \leq \delta
\}
.
\]
Then, $X_{s,\delta}$ is a closed subset in $W^{s, p}(\mathcal{D} ; \mathbb{R})$. 

\subsubsection{Well-Posedness of the {Semilinear} Elliptic Problems}

The following lemma constructs the solution to any of our FBSDEs by using the contractility of $T$; this is related to the approach of~\cite{henry2014numerical,henry2019branching}.

\begin{lemma}[{Contractivity of $T$ on $X_{s,\delta}$}]
\label{lem:sol-semilinear} 
Let $f_0 \in  B_{W^{s,\infty}_0(\mathcal{D};\mathbb{R})}(0, \delta^2)$ and $g \in B_{W^{s + \frac{d+1}{2},2}(\partial \mathcal{D}; \mathbb{R})}(0, \delta^2)$.
Under Assumptions ~\ref{ass:p-p-prime}, ~\ref{ass:GreensSymmetry}, ~\ref{ass:semilinear-term}, and ~\ref{ass:choice-delta-p}, there exists some $0<\rho<1$ such that
the operator $T : X_{s,\delta} \to X_{s,\delta}$ is $\rho$-Lipschitz; where $\rho$ depends on the constant $C_2$ postulated by Assumption~\ref{ass:choice-delta-p}. 
In particular, there exist a unique solution of \eqref{eq:integral-semilinear} in $X_{s,\delta}$.
\end{lemma}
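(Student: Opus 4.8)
The plan is to prove this by verifying the two hypotheses of the Banach fixed point theorem: that $T$ maps $X_{s,\delta}$ into itself, and that it is a contraction there. The key point is that both facts follow from the same pair of estimates: (a) a bound on the integral operator $u\mapsto \int_{\mathcal{D}}G_{\gamma}(\cdot,y)\,v(y)\,dy$ as a map into $W^{s,\infty}(\mathcal{D})$, controlled by $\|v\|$ in a suitable $L^{\infty}$-type norm, which comes from Lemma~\ref{lem:Linfty-Lp-Green} together with H\"older's inequality using the conjugate pair $(p,p')$ of Assumption~\ref{ass:p-p-prime}; and (b) the local Lipschitz behaviour of the Nemytskii map $u\mapsto \alpha(\cdot,u)$ on the ball $X_{s,\delta}$, which is where Assumption~\ref{ass:semilinear-term} enters. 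Since $\alpha$ is polynomial in $z$ of degree $H$ on $|z|<\delta_0\geq\delta$, with vanishing $0$-th and $1$-st order terms (Assumption~\ref{ass:semilinear-term}(i)--(ii)), we get for $u,\tilde u\in X_{s,\delta}$ a bound $\|\alpha(\cdot,u)-\alpha(\cdot,\tilde u)\|_{W^{s,\infty}} \lesssim \big(\sup_h\|\partial_z^h\alpha(\cdot,0)\|_{W^{s,\infty}}\big)\,\delta\,\|u-\tilde u\|_{W^{s,\infty}}$, the crucial extra factor $\delta$ coming from the fact that every monomial in $\alpha$ has degree $\geq 2$ so its "derivative in $u$" still carries at least one factor of $u$ (hence of $\|u\|_{W^{s,\infty}}\le\delta$); the algebra/multiplier structure of $W^{s,\infty}(\mathcal{D})$ makes these products well-behaved. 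Setting $u=\tilde u=0$ in the same computation and using $\alpha(\cdot,0)=0$ gives $\|\alpha(\cdot,u)\|_{W^{s,\infty}}\lesssim \delta\,\|u\|_{W^{s,\infty}}\le\delta\cdot\delta$, i.e.\ a $\mathcal{O}(\delta^2)$ bound.

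First I would fix the functional-analytic setup: note $X_{s,\delta}$ is a closed (hence complete) subset of the Banach space $W^{s,p}(\mathcal{D};\mathbb{R})$, as already observed in the text, and record that the Green's-function integral operator $u\mapsto \int_{\mathcal{D}}G_{\gamma}(\cdot,y)v(y)\,dy$ sends $L^{\infty}(\mathcal{D})$ (and more generally $W^{s,\infty}(\mathcal{D})$, differentiating under the integral sign using $|\partial_x^{\beta}G_\gamma(x,y)|\lesssim|x-y|^{1-d}$ for $|\beta|\le 1$ and an analogous argument for higher derivatives, or invoking elliptic regularity $\nabla\cdot\gamma\nabla(T u - w_g) = f_0 - \alpha(\cdot,u)$ directly) continuously into $W^{s,\infty}(\mathcal{D})$ with operator norm a constant $C$ depending on $s,p,d,\mathcal{D},\gamma$. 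Next I would estimate $\|T(u)\|_{W^{s,\infty}}$: it is bounded by $C\big(\|\alpha(\cdot,u)\|_{W^{s,\infty}} + \|f_0\|_{W^{s,\infty}}\big) + \|w_g\|_{W^{s,\infty}}$. Using step (b) with $\tilde u=0$, the elliptic estimate $\|w_g\|_{W^{s,\infty}}\lesssim\|g\|_{W^{s+(d+1)/2,2}(\partial\mathcal{D})}$ (Sobolev embedding, as already invoked in the text to define $T$), and the hypotheses $\|f_0\|,\|g\|\le\delta^2$, one obtains $\|T(u)\|_{W^{s,\infty}}\le C_1\delta\cdot\delta + C'\delta^2 \le \delta$, where the final inequality uses the smallness $C_1\delta<1$ from Assumption~\ref{ass:choice-delta-p} (absorbing $C'$ into the constants $C_1,C_7$ there). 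This shows $T(X_{s,\delta})\subseteq X_{s,\delta}$.

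Then I would prove contractivity: for $u,\tilde u\in X_{s,\delta}$, $T(u)-T(\tilde u) = \int_{\mathcal{D}}G_{\gamma}(\cdot,y)\big[\alpha(y,u(y))-\alpha(y,\tilde u(y))\big]dy$ (the $f_0$ and $w_g$ terms cancel), so $\|T(u)-T(\tilde u)\|_{W^{s,p}} \le \|T(u)-T(\tilde u)\|_{W^{s,\infty}}\,|\mathcal{D}|^{1/p} \le C\,\|\alpha(\cdot,u)-\alpha(\cdot,\tilde u)\|_{W^{s,\infty}}$, and then step (b) yields $\le C\,C_2'\,\delta\,\|u-\tilde u\|_{W^{s,\infty}}$. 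Here the passage from the $W^{s,p}$ norm in the statement to the $W^{s,\infty}$ norm on the left is the routine bound $\|v\|_{W^{s,p}(\mathcal{D})}\le |\mathcal{D}|^{1/p}\|v\|_{W^{s,\infty}(\mathcal{D})}$ on a bounded domain; crucially, $T$ lands in $W^{s,\infty}$, so the right-hand side uses the $W^{s,\infty}$-metric which is the one defining $X_{s,\delta}$, and the contraction is in that stronger metric, which also dominates the $W^{s,p}$ metric. Setting $\rho := C_2\delta$ with $C_2$ the constant packaging $C\cdot C_2'$, Assumption~\ref{ass:choice-delta-p} gives $\rho<1$. The Banach fixed point theorem on the complete metric space $(X_{s,\delta},\|\cdot\|_{W^{s,\infty}})$ — noting $X_{s,\delta}$ is also closed in this finer metric — then furnishes the unique fixed point, which is precisely the unique solution of~\eqref{eq:integral-semilinear} in $X_{s,\delta}$.

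The main obstacle is making the Nemytskii estimate in step (b) rigorous at the level of the $W^{s,\infty}$ norm rather than just $L^{\infty}$: one must check that composition with the polynomial $\alpha(x,\cdot)$, whose coefficients $\partial_z^h\alpha(\cdot,0)$ lie in $W^{s+\lceil d/p\rceil+1+\sigma,\infty}_0(\mathcal{D})$ by Assumption~\ref{ass:semilinear-term}(iii), maps $X_{s,\delta}$ Lipschitz-continuously into $W^{s,\infty}(\mathcal{D})$ with the all-important gain of a factor $\delta$. This requires the Moser-type product/multiplier estimates for $W^{s,\infty}$ (an algebra under pointwise multiplication on a bounded Lipschitz domain) applied monomial-by-monomial to $\alpha(x,u)-\alpha(x,\tilde u) = \sum_{h=2}^{H}\frac{\partial_z^h\alpha(\cdot,0)}{h!}(u^h-\tilde u^h)$ and factoring $u^h-\tilde u^h = (u-\tilde u)\sum_{j=0}^{h-1}u^j\tilde u^{h-1-j}$, so that each summand carries $\ge h-1\ge 1$ factors bounded by $\delta$ in $W^{s,\infty}$. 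The regularity margin in Assumptions~\ref{ass:semilinear-term}(iii) and~\ref{ass:sigma_s_k_p__regularity} is exactly what guarantees these products stay in $W^{s,\infty}$; once this estimate is in hand, the contraction constants $C_1,C_2,C_7$ of Assumption~\ref{ass:choice-delta-p} are simply the accumulated constants, and the rest is the standard fixed-point argument.
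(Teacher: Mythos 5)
Your high-level plan is the same as the paper's: verify that $T$ maps $X_{s,\delta}$ into itself and is a $\rho$-contraction with $\rho = C_2\delta < 1$, then invoke the Banach fixed point theorem; and your key algebraic device — factoring $u^h - \tilde u^h = (u-\tilde u)\sum_{j} u^j\tilde u^{h-1-j}$ and using that every monomial in $\alpha(x,\cdot)$ has degree $\ge 2$ to extract the crucial extra factor of $\delta$ from $\|u\|_{W^{s,\infty}},\|\tilde u\|_{W^{s,\infty}} \le \delta$ — is exactly the paper's. The choice to run the contraction in the $W^{s,\infty}$ metric rather than the $W^{s,p}$ metric (as the paper does) is a harmless variation, since $X_{s,\delta}$ is a closed ball of $W^{s,\infty}$ and also closed in $W^{s,p}$.

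However, there is a genuine gap in your step~(a), which is also the technical heart of the lemma. You need $\|\partial_x^\beta T(u)\|_{L^\infty}$ controlled for all $|\beta| \le s$, and you propose to get it by ``differentiating under the integral sign using $|\partial_x^\beta G_\gamma(x,y)| \lesssim |x-y|^{1-d}$ for $|\beta| \le 1$ and an analogous argument for higher derivatives.'' There is no analogous pointwise argument: for $|\beta| \ge 2$ the derivative $\partial_x^\beta \Phi_\gamma$ behaves like $|x-y|^{-d}$ or worse and is not locally integrable, so this estimate simply stops being available. The paper resolves this by never differentiating the singular kernel more than formally: after writing $\partial_x^\beta\Phi_\gamma(x-y) = (-1)^{|\beta|}\partial_y^\beta\Phi_\gamma(x-y)$ it integrates by parts to move the $\partial_y^\beta$ onto the other factor, which is compactly supported in $\mathcal{D}$ (that is precisely why Assumption~\ref{ass:semilinear-term}(iii) posits $\partial_z^h\alpha(\cdot,0) \in W^{\cdot,\infty}_0$ and the lemma takes $f_0\in W^{s,\infty}_0$), then bounds the resulting pairing as $|\langle\partial_y^\beta\Phi_\gamma(x-\cdot), v\rangle| \le \|\Phi_\gamma(x-\cdot)\|_{W^{-s,p'}} \|v\|_{W^{s,p}} \le \|\Phi_\gamma(x-\cdot)\|_{L^{p'}}\|v\|_{W^{s,p}}$, finishing with Lemma~\ref{lem:Linfty-Lp-Green}. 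This integration-by-parts / $W^{-s,p'}\times W^{s,p}_0$ duality device is the paper's actual mechanism; it is why the compact-support hypotheses are present in the assumptions and is not recoverable from a naive kernel estimate. Your fallback suggestion — deduce $W^{s,\infty}$-regularity of $T(u) - w_g$ from elliptic $W^{s+2,p}$ regularity for $\nabla\cdot\gamma\nabla(T(u)-w_g) = f_0 - \alpha(\cdot,u)$ together with the Sobolev embedding $W^{s+2,p}\hookrightarrow W^{s,\infty}$ (valid since $p > d$ under Assumption~\ref{ass:p-p-prime}) — would close the gap and constitutes a legitimate alternative route, but you should commit to it and spell it out rather than list it as an aside, and it imports the elliptic-regularity machinery that the paper's duality argument deliberately avoids.
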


\begin{proof}
We begin by observing that
\begin{align}
Tw(x)
& = \int_{\mathcal{D}}G_{\mathcal{L}}(x,y)\left[a\bigl(y,w(y)\bigr) -f_0(y)\right]dy + w_{g}(x)
\nonumber\\
&=\int_{\mathcal{D}}G_{\mathcal{L}}(x,y)\left[\sum_{h = 2}^{H}\dfrac{\partial^{h}_{z}\alpha(y,0)}{h !}w(y)^{h}-f_0(y) \right]dy + w_{g}(x), \ x \in \mathcal{D}.
\nonumber
\end{align}
First, we will first show that $T:X_{s,\delta} \to X_{s,\delta}$. 
Let $w \in X_{s,\delta}$.
By Assumption~\ref{ass:GreensSymmetry}, we see that the Green's functions can be decomposed into the sum of two functions
\[
G_{\mathcal{L}}(x,y)
= \Phi_{\mathcal{L}}(x,y) + \Psi_{\mathcal{L}}(x,y).
\]
\add{
As $f_0 \in B_{W^{s,\infty}_0(\mathcal{D}; \mathbb{R})}(0, \delta^2)$ and $g \in B_{W^{s + \frac{d+1}{2},2}(\partial \mathcal{D}; \mathbb{R})}(0, \delta^2)$, we see that  $w_g \in B_{W^{s,\infty}(\mathcal{D}; \mathbb{R})}(0, \delta^2)$. Using this fact,} 
we see that for $w \in X_{s, \delta}$, $x \in \mathcal{D}$, and $\beta \in \mathbb{N}^d_0$ with $|\beta| \leq s$, 
\allowdisplaybreaks
\add{
\begin{align}
\label{eq:deriv-fixed-point-map}
&
|\partial_{x}^{\beta} Tw(x)|
\\
&
\leq 
\sum_{h = 2}^{H}
\frac{1}{h !}
\left(\left|
\int_{\mathcal{D}} \partial_{x}^{\beta} \Phi_{\mathcal{L}}(x,y)
\partial^{h}_{z}\alpha(y,0) w(y)^{h} dy \right| 
\right.
\left.
+
\left|
\int_{\mathcal{D}} \partial_{x}^{\beta} \Psi_{\mathcal{L}}(x,y)
\partial^{h}_{z}\alpha(y,0) w(y)^{h} dy \right| 
\right)
\nonumber
\\
&
+ \left| \int_{\mathcal{D}} \partial_{x}^{\beta} \Phi_{\mathcal{L}}(x,y) f_0(y) dy \right|
+
\left|
\int_{\mathcal{D}} \partial_{x}^{\beta} \Psi_{\mathcal{L}}(x,y)
f_0(y) dy \right| 
+ |\partial_{x}^{\beta}w_{g}(x)|
\nonumber
\\
&
\lesssim
\sum_{h = 2}^{H}
\frac{1}{h !}
\left|
\int_{\mathcal{D}} \partial_{y}^{\beta} \Phi_{\mathcal{L}}(x,y)
\partial^{h}_{z}\alpha(y,0) w(y)^{h} dy \right|
+ \left| \int_{\mathcal{D}} \partial_{y}^{\beta} \Phi_{\mathcal{L}}(x,y) f_0(y) dy \right|
+ \delta^2
\nonumber
\\
&
\leq 
\sum_{h = 2}^{H}
\frac{1}{h !}
\left\|\partial^\beta_y \tilde{\Phi}_{\mathcal{L},\beta}(x,y)\right\|_{W^{-s,p'}_y(\mathcal{D};\mathbb{R})}
\left(
\left\|
\partial^{h}_{z}\alpha(y,0) w(y)^{h}
\right\|_{W^{s,p}_y(\mathcal{D};\mathbb{R})}
+
\left\|
f_0(y)
\right\|_{W^{s,p}_y(\mathcal{D};\mathbb{R})}
\right)
+\delta^2 
\\
&
\lesssim
\sum_{h = 2}^{H}
\frac{1}{h !}
\left\|\tilde{\Phi}_{\mathcal{L},\beta}(x,y)\right\|_{L^{p'}_y(\mathcal{D};\mathbb{R})}
\left(
\left\|
w(y)^{h}
\right\|_{W^{s,p}_y(\mathcal{D};\mathbb{R})}
+
\left\|
f_0(y)
\right\|_{W^{s,p}_y(\mathcal{D};\mathbb{R})}
\right)
+\delta^2 
\lesssim \delta^2.
\end{align}
where we have used the fact that $\Psi$ is a smooth function and $\tilde{\Phi}_{\mathcal{L},\beta}(x,y) \in L^{\infty}_x(\mathcal{D}; L^{p'}_y(\mathcal{D};\mathbb{R}))$ by Assumption~\ref{ass:GreensSymmetry}, and we have used the fact of $\|w^h\|_{W^{s,p}} \lesssim \delta^2$ for $2 \leq h \leq H$.
}


\noindent Thus, we have that $Tw \in W^{s, \infty}(\mathcal{D} ; \mathbb{R})$ and there is a constant $C_1>0$, depending only on the quantities $s, p, d, \mathcal{D}, \alpha, \mathcal{L}$, satisfying
\begin{equation}
\label{eq:C-1}
\|Tw\|_{W^{s,\infty}(\mathcal{D}; \mathbb{R})} \leq C_1 \delta^2
.
\end{equation}
By choosing $\delta >0$ small enough; specifically, in as Assumption~\ref{ass:choice-delta-p}, we deduce that $Tw \in X_{s,\delta}$. 

\noindent Next, we will show that $T:X_{s,\delta} \to X_{s,\delta}$ is a contraction. 
Let $w_1,w_2 \in X_{s,\delta}$. 
As 
\allowdisplaybreaks
\begin{align*}
w_1(y)^{h}-w_2(y)^{h} 
& 
= 
\left(\sum_{i=0}^{h-1}w_1(y)^{h-1-i}w_2(y)^i \right)
\bigl( w_1(y)-w_2(y)\bigr),
\end{align*}
we see that for $\beta \in \mathbb{N}^d_0$ with $|\beta| \leq s$, 
by the same argument in (\ref{eq:deriv-fixed-point-map})
\allowdisplaybreaks
\begin{align}
\label{eq:contractionestimate_A}
&
|\partial_{x}^{\beta} Tw_1(x) - \partial_{x}^{\beta} Tw_2(x)|
\\
&
\leq 
\sum_{h = 2}^{H}
\frac{1}{h !}
\Bigg( \left|
\int_{\mathcal{D}} \partial_{y}^{\beta} \Phi_{\mathcal{L}}(x,y)
\partial^{h}_{z}\alpha(y,0) (w_1(y)^{h} - w_2(y)^{h}) dy \right| 
\nonumber
\\
&
+
\left|
\int_{\mathcal{D}} \partial_{x}^{\beta} \Psi_{\mathcal{L}}(x,y)
\partial^{h}_{z}\alpha(y,0) (w_1(y)^{h} - w_2(y)^{h}) dy \right| 
\Biggr)
\nonumber
\\
&
\nonumber
\lesssim
\sum_{h = 2}^{H}
\frac{1}{h !}
\Bigg(
\left\|\tilde{\Phi}_{\mathcal{L},\beta}(x,y)\right\|_{L^{p'}_y(\mathcal{D};\mathbb{R})}
\left\|
\partial^{h}_{z}\alpha(y,0) (w_1(y)^{h} - w_2(y)^{h})
\right\|_{W^{s,p}_y(\mathcal{D};\mathbb{R})}
\\
&
+
\left\|\Psi_{\mathcal{L}}(x,y)\right\|_{W^{-s,p'}_y(\mathcal{D};\mathbb{R})}
\left\|
\partial^{h}_{z}\alpha(y,0) (w_1(y)^{h} - w_2(y)^{h})
\right\|_{W^{s,p}_y(\mathcal{D};\mathbb{R})}
\Biggr)
\nonumber
\\
\label{eq:contractionestimate_B}
&
\lesssim
\delta
\|w_1-w_2\|_{W^{s,p}(\mathcal{D};\mathbb{R})},
\end{align}
From the estimate~\eqref{eq:contractionestimate_A}-\eqref{eq:contractionestimate_B}, we deduce that $T|_{X_{s,\delta}}$ is $\rho$-Lipschitz since
\begin{equation}
\label{eq:C-2}
\|Tw_1 -Tw_2 \|_{W^{s, p}(\mathcal{D}; \mathbb{R})} \leq C_2 \delta \| w_1 - w_2 \|_{W^{s, p}(\mathcal{D}; \mathbb{R})} 
= \rho \| w_1 - w_2 \|_{W^{s, p}(\mathcal{D}; \mathbb{R})},
\end{equation}
where $C_{2} >0$ is a constant depending on $s, p, d, \mathcal{D}, \alpha, \mathcal{L}$. 
By choosing $\delta >0$ in Assumption~\ref{ass:choice-delta-p}, we have that $T$ is $\rho$-contraction mapping in $X_{s,\delta}$. 
\end{proof}
We immediately deduce the following consequence of Lemma~\ref{lem:sol-semilinear}  and the Banach Fixed Point Theorem.
\begin{proposition}[{Well-Posedness of $\Gamma^+$ on $B_{W^{s,\infty}_0(\mathcal{D}; \mathbb{R})}(0, \delta^2) \times B_{W^{s + \frac{d+1}{2},2}(\partial \mathcal{D}; \mathbb{R})}(0, \delta^2)$}]
\label{cor:wellposedness}
\hfill\\
The mapping $
\Gamma^{+} 
: 
B_{W^{s,\infty}_0(\mathcal{D}; \mathbb{R})}(0, \delta^2) \times B_{W^{s + \frac{d+1}{2},2}(\partial \mathcal{D}; \mathbb{R})}(0, \delta^2) \to 
X_{s,\delta}
$ 
defined by
\[
\Gamma^{+} : (f_0, g) \mapsto u,
\]
where, $u$ is the unique solution of~\eqref{eq:integral-semilinear}
; is well-defined on $B_{W^{s,\infty}_0(\mathcal{D}; \mathbb{R})}(0, \delta^2) \times B_{W^{s + \frac{d+1}{2},2}(\partial \mathcal{D}; \mathbb{R})}(0, \delta^2)$.
\end{proposition}

\subsection{Expansion of the Green function}
Again we use decomposition of the Green's function to the elliptic operator as $
G_{\mathcal{L}}(x,y)
= \Phi_{\mathcal{L}}(x,y) + \Psi_{\mathcal{L}}(x,y)
$; 
where, $\Phi_{\mathcal{L}}$ is singluar and $\Psi_{\mathcal{L}}:\mathcal{D} \times \mathcal{D} \to \mathbb{R}$ is smooth. Our next objective is to use the regularity of each of these components to obtain linear approximations of the Green's function using the sequences $\phi$ and $\psi$ used to define the convolutional NO (Definition~\ref{def:neural-operator}).

\begin{lemma}[Approximation of the Green's Function and its First Derivatives]
\label{lem:expansion-Green}
Fix a ``target convergence rate'' $r > 0$ and fix a the ``domain lifting dimension'' $k\in \mathbb{N}_+$ to be given by
\begin{equation}
\label{eq:Wavelet_Lifting_dimension}
        k
    \eqdef 
        \Biggl\lceil
            \frac{
                s+ \lceil d/p \rceil + 1 +\sigma
            }{
                r\,2d
            }
        \Biggr\rceil
    \in
    {
        \mathcal{O}\biggl(
            \frac1{p} 
            + 
            \frac{1+\sigma+s}{d}
        \biggr)
    }
.
\end{equation}
Under Assumption~\ref{ass:sigma_s_k_p__regularity}, there exist some sufficiently smooth and compactly supported wavelet frame $\varphi=\{\varphi_n^{\uparrow}\}_{n \in \nabla}$ on {$L^p(\mathcal{D}^k)$}, as in Lemma~\ref{lem:wavelet-sob-rate-app},
such that there is some $\Lambda_N\subset \nabla$ with $|\Lambda_N|\le N$ and $\{c_{n,m}\}_{n,m \in \Lambda_N}$, $\{\alpha^{(h)}_n\}_{n \in \Lambda_N} \subset \mathbb{R}$, {linear maps $A:\mathbb{R}^d\to \mathbb{R}^{kd}$ and $\Pi:\mathbb{R}^{dk}\to \mathbb{R}^d$}
such that we have 
\[
    \|\Psi_{\mathcal{L}} - \Psi_{\mathcal{L},N} \|_{W^{s, p}(\mathcal{D} \times \mathcal{D}; \mathbb{R})} 
\leq 
    C N^{-\frac{s+\lceil \frac{d}{p} \rceil+1+\sigma}{2kd}}
{
    \leq
    C\, N^{-r}
}
,
\]
where $\Psi_{\mathcal{L},N}(x,y) \eqdef \Psi_{\mathcal{L},N}^{\uparrow}(A(x), A(y))
$ and 
$$ 
\Psi_{\mathcal{L},N}^{\uparrow}(\bar{x},\bar{y}) \eqdef \sum_{n,m \in \Lambda_{N}} c_{n,m} \varphi_n^{\uparrow}(\bar{x})\varphi_m^{\uparrow}(\bar{y}), \quad \bar{x}, \bar{y} \in \mathcal{D}^k,
$$
and for any $h=2,...,H$
\[
\| \partial_z^{h}\alpha(\cdot, 0) - \alpha^{(h)}_N\|_{W^{s,p}(\mathcal{D})} \leq C N^{-\frac{s+\lceil \frac{d}{p} \rceil+1+\sigma}{2kd}}
{
    \leq
    C\, N^{-r}
}
,
\]
where 
\[
\alpha_{N}^{(h)}(x) 
 \eqdef  
    \sum_{n \in \Lambda_{N}} \alpha_{n}^{(h)} \varphi_n^{\uparrow}(Ax)
.
\]
\end{lemma}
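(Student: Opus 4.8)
\medskip
\noindent\textbf{Proof strategy.}
The plan is to reduce the statement to the domain-lifted wavelet approximation estimate of Lemma~\ref{lem:wavelet-sob-rate-app}, applied once to the smooth part $\Psi_{\gamma}$ of the Green's function (after a tensor-product lift) and once to each of the finitely many coefficient functions $\partial_z^{h}\alpha(\cdot,0)$, $h=2,\dots,H$, and then to merge the resulting wavelet frames and truncation index sets into a single frame $\varphi$ and a single set $\Lambda_N$.

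First I would record the two regularity inputs. By~\cite{gilbarg1977elliptic,cao2024expansion} one has $G_{\gamma}=\Phi_{\gamma}+\Psi_{\gamma}$ with $\Psi_{\gamma}\in C^{\infty}(\overline{\mathcal{D}}\times\overline{\mathcal{D}})$ symmetric, hence $\Psi_{\gamma}\in W^{t,p}(\mathcal{D}\times\mathcal{D};\mathbb{R})$ for every $t\ge 0$; and, writing $s_{\star}\eqdef s+\lceil d/p\rceil+1+\sigma$, Assumption~\ref{ass:semilinear-term}\,(iii) gives $\partial_z^{h}\alpha(\cdot,0)\in W^{s_{\star},\infty}_{0}(\mathcal{D};\mathbb{R})\hookrightarrow W^{s_{\star},p}(\mathcal{D};\mathbb{R})$ for each $h=2,\dots,H$, the embedding being valid since $\mathcal{D}$ is bounded. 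Fixing the target rate $r>0$ and setting $k$ as in~\eqref{eq:Wavelet_Lifting_dimension}, Assumption~\ref{ass:sigma_s_k_p__regularity}\,(i)--(ii) is precisely the regularity window under which Lemma~\ref{lem:wavelet-sob-rate-app} (building on the Jackson--Bernstein estimate of Lemma~\ref{lem:Sparse_bais_approximation_smooth_function_wkp-app}) produces, for a source function of smoothness $s_{\star}$, a sufficiently smooth and compactly supported wavelet frame $\varphi=\{\varphi_n^{\uparrow}\}_{n\in\nabla}$ on $L^{p}(\mathcal{D}^{k})$, an injective linear lift $A:\mathbb{R}^{d}\to\mathbb{R}^{kd}$ with left inverse $\Pi:\mathbb{R}^{dk}\to\mathbb{R}^{d}$ ($\Pi A=I_d$), and $N$-term approximants whose error is of the claimed form $N^{-s_{\star}/(2kd)}$ in the $W^{s,p}$ norm.

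Next I would apply this lemma to each target. For each $h=2,\dots,H$, invoking Lemma~\ref{lem:wavelet-sob-rate-app} on $\partial_z^{h}\alpha(\cdot,0)$ along the lift $x\mapsto Ax$ yields coefficients $\{\alpha_n^{(h)}\}$ supported on an index set of cardinality $\le N$ with $\|\partial_z^{h}\alpha(\cdot,0)-\alpha_N^{(h)}\|_{W^{s,p}(\mathcal{D})}\le C N^{-s_{\star}/(2kd)}$, where $\alpha_N^{(h)}(x)=\sum_n \alpha_n^{(h)}\varphi_n^{\uparrow}(Ax)$. For $\Psi_{\gamma}$, which lives on the $2d$-dimensional set $\mathcal{D}\times\mathcal{D}$, I would first extend it to $\widetilde{\Psi}\eqdef\Psi_{\gamma}\circ(\Pi\times\Pi)$ on $\mathcal{D}^{k}\times\mathcal{D}^{k}$: this extension is still smooth, its $W^{t,p}(\mathcal{D}^{k}\times\mathcal{D}^{k})$ norms are controlled by those of $\Psi_{\gamma}$ up to factors $|\mathcal{D}|^{k-1}$ absorbed into $C$, and it satisfies $\widetilde{\Psi}(Ax,Ay)=\Psi_{\gamma}(x,y)$ since $\Pi A=I_d$. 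Applying the tensor-product version of Lemma~\ref{lem:wavelet-sob-rate-app} on the $2kd$-dimensional domain $\mathcal{D}^{2k}$ with the \emph{same} single-factor frame $\varphi$ then gives $\Psi_{\gamma,N}^{\uparrow}(\bar x,\bar y)=\sum_{n,m}c_{n,m}\varphi_n^{\uparrow}(\bar x)\varphi_m^{\uparrow}(\bar y)$ with $\|\widetilde{\Psi}-\Psi_{\gamma,N}^{\uparrow}\|_{W^{s,p}(\mathcal{D}^{2k})}\le C N^{-s_{\star}/(2kd)}$; restricting along $A\times A$ transfers this to the asserted bound on $\|\Psi_{\gamma}-\Psi_{\gamma,N}\|_{W^{s,p}(\mathcal{D}\times\mathcal{D})}$ with $\Psi_{\gamma,N}(x,y)=\Psi_{\gamma,N}^{\uparrow}(Ax,Ay)$. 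Since all $H$ approximations use the one frame $\varphi$, I would take $\Lambda_N$ to be the union of the $H-1$ index sets of the $\alpha$-parts and the (double) index set of $\Psi_{\gamma}$ --- still $\mathcal{O}(N)$ elements --- pad every coefficient array by zero on the complementary indices, absorb the factor $H$ into $C$, and rename $N$. Substituting $k=\big\lceil s_{\star}/(2dr)\big\rceil$ from~\eqref{eq:Wavelet_Lifting_dimension} converts $C N^{-s_{\star}/(2kd)}$ into $\lesssim C N^{-r}$, which is the claim.

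I expect the main obstacle to be entirely contained in Lemma~\ref{lem:wavelet-sob-rate-app}: reconciling Jackson--Bernstein-type wavelet rates measured in the \emph{high-order} Sobolev norm $W^{s,p}$ --- which require the target to carry enough excess smoothness over $W^{s,p}$ --- with the Sobolev embedding, which caps how much smoothness a function on a $d$-dimensional (respectively $2d$-dimensional) physical domain can carry relative to $L^{p}$. The lift $\mathcal{D}\hookrightarrow\mathcal{D}^{k}$ is exactly the device that opens the needed smoothness-per-dimension budget, and Assumption~\ref{ass:sigma_s_k_p__regularity} is what makes the two competing constraints simultaneously satisfiable. Within the present lemma the only delicate points are: that a single frame $\varphi$ on $L^{p}(\mathcal{D}^{k})$ can serve all $H$ targets (so that its tensor squares handle $\Psi_{\gamma}$ on $\mathcal{D}^{2k}$ while it itself handles the $\alpha$-parts on $\mathcal{D}^{k}$); that the extension $\Psi_{\gamma}\mapsto\widetilde{\Psi}$ via $\Pi$ preserves the needed regularity and that the $W^{s,p}$ norm pulls back correctly under the lift $A$; and that unioning the finitely many truncation sets keeps the rank $\mathcal{O}(N)$ with constant depending only on $s,\sigma,k,p,\mathcal{D},d,\alpha,\gamma$.
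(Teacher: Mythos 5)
Your proposal is correct and takes essentially the same approach as the paper: both reduce the statement to the domain-lifted wavelet estimate of Lemma~\ref{lem:wavelet-sob-rate-app}, applied to $\Psi_{\gamma}$ (on the product domain $\mathcal{D}\times\mathcal{D}$) and to each $\partial_z^{h}\alpha(\cdot,0)$, with Assumptions~\ref{ass:semilinear-term} and~\ref{ass:sigma_s_k_p__regularity} supplying exactly the regularity window that lemma requires. The paper's proof is a terse two-sentence invocation of that lemma; you merely spell out the bookkeeping it leaves implicit --- the tensor-product structure of the frame used for $\Psi_{\gamma}$, the lift-via-$\Pi$ / restrict-via-$A$ mechanism with $\Pi A = I_d$, and the union of the finitely many truncation sets $\Lambda_N$ --- so there is no genuinely different route here.
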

\begin{proof}
Fix a ``target convergence rate'' $r > 0$ and fix a the ``domain lifting dimension'' $k\in \mathbb{N}_+$ be as in~\eqref{eq:Wavelet_Lifting_dimension}.
Thanks to Assumption~\ref{ass:sigma_s_k_p__regularity}, with Assumption~\ref{ass:sigma_s_k_p__regularity}, we can apply Lemma~\ref{lem:wavelet-sob-rate-app} as $f= \Psi_{\mathcal{L}}, \partial_{z}^h \alpha(\cdot, 0)$, which obtains Lemma~\ref{lem:expansion-Green}. 
Note that $\partial_{z}^{h}\alpha(\cdot,0) \in W^{s + \lceil \frac{d}{p} \rceil+1+ \sigma ,\infty}(\mathcal{D};\mathbb{R})$ by Assumption~\ref{ass:semilinear-term} and $\Psi \in W^{s + \lceil \frac{d}{p} \rceil+1+ \sigma ,\infty}(\mathcal{D}\times \mathcal{D};\mathbb{R})$ as the regular part of Green function $G_{\mathcal{L}}$.
\end{proof}

\subsection{Our Main PDE Approximation Guarantees}
\label{s:Proof__ss:ApproximationResult}

We now know that the solution operator $\Gamma^+$ to our family of elliptic problems is a well-defined contraction on $X_{s,\delta}$, and since we now know that the Green's function associated to our elliptic operator can also be efficiently approximated by a truncated wavelet expansion.  We now use these tools to show that $\Gamma^+$ can be efficiently approximated by our neural operators.
We recall the definition of the map $S_{\gamma}: W^{s+(d+1)/2,2}(\partial \mathcal{D}; \mathbb{R}) \to W^{s, \infty}(\mathcal{D}; \mathbb{R})$ given in~\eqref{eq:Sgamma_boundary_to_domain_operator}.

\subsection{Proof of Theorem~\ref{thrm:Main_EllipticPDESemilinear}}
\label{s:Proofs__ss:Proofofthm:semilinear}
\noindent We now prove Theorem~\ref{thrm:Main_EllipticPDESemilinear} in a series of several steps.  
\hfill\\
Let $(f_0,g) \in  B_{W^{s,\infty}_0(\mathcal{D};\mathbb{R})}(0, \delta^2) \times B_{W^{s + (d+1)/2,2}(\partial \mathcal{D}; \mathbb{R})}(0, \delta^2)$ and $u \in X_{s,\delta}$ is a solution of (\ref{eq:integral-semilinear}), that is, $\Gamma^{+}(f_0,g)=u$.

\add{Let $\varepsilon \in (0,1)$. In this subsection, for simplicity, we use the symbol $\lesssim$ to omit multiplicative constants on the right-hand side that are independent of $\varepsilon$.}

\subsubsection{(Step 3.1 - Reduction to Finite Dimensions by Truncated Expansion)}
We define the map $T_{N}$ by 
\begin{align}
\label{eq:definition_truncarted__T}
T_{N}(u)(x)
&
=
\sum_{h=2}^{H}
\frac{1}{h!}
\int_{\mathcal{D}}G_{\mathcal{L}, N}(x,y)\alpha^{(h)}_N(y) 
(u(y))^{h} dy
-\int_{\mathcal{D}}G_{\mathcal{L}, N}(x,y)f_0(y) dy
+ w_{g}(x).
\end{align}
where 
\begin{align*}
&
G_{\mathcal{L}, N}(x,y) \eqdef \Phi_{\mathcal{L}}(x,y) + \Psi_{\mathcal{L},N}(x,y),
\end{align*}
where $\Psi_{\mathcal{L},N}$ and $\alpha_{N}^{(h)}$ are defined in Lemma~\ref{lem:expansion-Green}.
By Lemma~\ref{lem:expansion-Green}, there exist large $N \in \mathbb{N}$ such that 
\begin{align}
&
\|
\Psi_{\mathcal{L}}-
\Psi_{\mathcal{L},N} \|_{W^{s, p}(\mathcal{D}\times \mathcal{D}; \mathbb{R})} \lesssim N^{-\frac{s+\lceil \frac{d}{p} \rceil+1+\sigma}{2kd}}  \leq \epsilon,
\label{eq:est:G-G-N-M-2}
\\
&
\| \partial_z^{h}\alpha(\cdot, 0) - \alpha^{(h)}_N\|_{W^{s,p}(\mathcal{D}; \mathbb{R})} \lesssim N^{-\frac{s+\lceil \frac{d}{p} \rceil+1+\sigma}{2kd}} \leq \epsilon.
\label{eq:est:G-G-N-M-3}
\end{align}
We denote by
\begin{align*}
        C_4
    \eqdef 
        2
        &
        \Biggl(
        \Psi_{\mathcal{L}}\|_{W^{s, \infty}(\mathcal{D}\times \mathcal{D}; \mathbb{R})}
        +
        \sum_{h=2}^{H}\|\partial_z^{h}\alpha(\cdot, 0)\|_{W^{s, \infty}(\mathcal{D};\mathbb{R})}
        \Biggr)
    < \infty
,
\end{align*}
where the above constant $C_4>0$ is finite and it only depends on  $s, p, d, \mathcal{D}, \alpha, \mathcal{L}$.

\begin{lemma}
\label{lem:boundness-G-gamma-N-M}
We have that for large $N \in \mathbb{N}$
\begin{align*}
\|
\Psi_{\mathcal{L},N}\|_{W^{s, \infty}(\mathcal{D}\times \mathcal{D}; \mathbb{R})}
\leq C_4
\mbox{ and }
\sum_{h=2}^{H}
\| \alpha^{(h)}_{N} \|_{W^{s, \infty}(\mathcal{D};\mathbb{R})}
\leq C_4.
\end{align*}
\end{lemma}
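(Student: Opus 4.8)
\textbf{Proof plan for Lemma~\ref{lem:boundness-G-gamma-N-M}.}
The plan is to upgrade the $W^{s,p}$-estimates of Lemma~\ref{lem:expansion-Green} to the stronger norm $W^{s,\infty}$ along a suitable sequence of truncation levels, and then to exploit the factor $2$ built into the definition of $C_4$. First note that $C_4<\infty$: the smooth part $\Psi_\gamma$ of the Green's function lies in $C^\infty(\overline{\mathcal{D}\times\mathcal{D}})\subset W^{s,\infty}(\mathcal{D}\times\mathcal{D})$, and by Assumption~\ref{ass:semilinear-term}(iii) each $\partial_z^{h}\alpha(\cdot,0)$, $h=2,\dots,H$, lies in $W^{s+\lceil d/p\rceil+1+\sigma,\infty}_0(\mathcal{D})\subset W^{s,\infty}(\mathcal{D})$, so (by Lemma~\ref{lem:Linfty-Lp-Green} and these inclusions) $C_4$ is a finite constant depending only on $s,\sigma,p,d,\mathcal{D},\alpha,\gamma$. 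Consequently it suffices to produce a sequence $N_k\to\infty$ with
\[
\|\Psi_\gamma-\Psi_{\gamma,N_k}\|_{W^{s,\infty}(\mathcal{D}\times\mathcal{D})}\longrightarrow 0
\quad\text{and}\quad
\sum_{h=2}^{H}\|\partial_z^{h}\alpha(\cdot,0)-\alpha^{(h)}_{N_k}\|_{W^{s,\infty}(\mathcal{D})}\longrightarrow 0,
\]
since then, once $k$ is large enough that these two errors drop below $\|\Psi_\gamma\|_{W^{s,\infty}}$ and $\sum_h\|\partial_z^{h}\alpha(\cdot,0)\|_{W^{s,\infty}}$ respectively, the triangle inequality gives $\|\Psi_{\gamma,N_k}\|_{W^{s,\infty}}\le 2\|\Psi_\gamma\|_{W^{s,\infty}}\le C_4$ and $\sum_h\|\alpha^{(h)}_{N_k}\|_{W^{s,\infty}}\le 2\sum_h\|\partial_z^{h}\alpha(\cdot,0)\|_{W^{s,\infty}}\le C_4$, which is the claim.

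To obtain the two limits I would treat the two targets separately. For $\Psi_\gamma$, which is $C^\infty(\overline{\mathcal{D}\times\mathcal{D}})$, the (sufficiently smooth, compactly supported) wavelet partial sums converge to it in every $W^{t,q}$ within the frame's range; embedding a high enough such space into $W^{s,\infty}(\mathcal{D}\times\mathcal{D})$ gives the first limit essentially for free. For the $\partial_z^{h}\alpha(\cdot,0)$, which have only finite smoothness, I would re-run the Jackson--Bernstein machinery behind Lemma~\ref{lem:expansion-Green} --- namely Lemmata~\ref{lem:Sparse_bais_approximation_smooth_function_wkp-app} and~\ref{lem:wavelet-sob-rate-app} together with the domain-lifting trick --- at the elevated Sobolev order $\tilde s\eqdef s+\lceil d/p\rceil+1$ rather than $s$. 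This is affordable because Assumption~\ref{ass:semilinear-term}(iii) gives regularity $s+\lceil d/p\rceil+1+\sigma$ with $\sigma> s+\lceil d/p\rceil+1\ge\lceil d/p\rceil+1$, so there is still positive smoothness budget over $\tilde s$ (at the possible cost of enlarging the lifting dimension, which is permitted since $s,\sigma,d,p$ are fixed and Assumption~\ref{ass:sigma_s_k_p__regularity} only constrains it from below). Since the truncation in Lemma~\ref{lem:expansion-Green} is a uniform-scale (equivalently, near-best $N$-term) wavelet truncation --- which is exactly what the algebraic rate $N^{-(s+\lceil d/p\rceil+1+\sigma)/(2kd)}$ reflects --- the same truncation sets $\Lambda_{N_k}$ that control the $W^{s,p}$-error also drive the $W^{\tilde s,p}$-error to zero. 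Finally the Morrey--Sobolev embedding $W^{\tilde s,p}(\mathcal{D})=W^{s+\lceil d/p\rceil+1,p}(\mathcal{D})\hookrightarrow W^{s,\infty}(\mathcal{D})$, valid because the regularity gap $\lceil d/p\rceil+1$ strictly exceeds $d/p$, converts this into the desired $W^{s,\infty}$-convergence.

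The crux --- and the only place genuine care is needed --- is precisely this passage from $W^{s,p}$ to $W^{s,\infty}$: within the original $d$-dimensional domain one cannot simultaneously push a Jackson--Bernstein estimate up to order $\tilde s$ and then Sobolev-embed $W^{\tilde s,p}\hookrightarrow W^{s,\infty}$ without running out of smoothness, and this is exactly the incompatibility that the domain-lifting channels resolve by buying extra dimensions; this is why Lemma~\ref{lem:wavelet-sob-rate-app} (rather than a plain wavelet estimate) must be invoked. A secondary, lesser point is checking that the truncation sets chosen to be near-optimal for the $W^{s,p}$-rate in Lemma~\ref{lem:expansion-Green} also serve for the stronger norm; this is automatic for a uniform-scale truncation and for near-best $N$-term approximation of functions as smooth as ours, but it is where one would need to be attentive if a different truncation rule were adopted. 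Granting this upgrade, the conclusion follows from the triangle-inequality argument of the first paragraph, completing the proof of Lemma~\ref{lem:boundness-G-gamma-N-M}.
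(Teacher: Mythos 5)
Your triangle-inequality reduction in the first paragraph is fine, and your instinct to obtain a genuine $W^{s,\infty}$-estimate on the error is a more quantitative route than the one the paper actually takes — the paper simply invokes $L^p$-convergence from Lemma~\ref{lem:expansion-Green}, passes to an a.e.-convergent subsequence, and asserts $\|\Psi_{\gamma,N_k}\|_{W^{s,\infty}}\to\|\Psi_\gamma\|_{W^{s,\infty}}$, then picks $k$ large. But the mechanism you propose for the upgrade has a genuine gap in its smoothness bookkeeping. Re-running Lemma~\ref{lem:wavelet-sob-rate-app} at the elevated order $\tilde s= s+\lceil d/p\rceil+1$ requires, for whatever substitute parameter $\sigma'$ you use, both (a) the lemma's condition $\tilde s+\lceil d/p\rceil+1<\sigma'$, i.e.\ $\sigma'>s+2(\lceil d/p\rceil+1)$, and (b) $f\in W^{\tilde s+\lceil d/p\rceil+1+\sigma',\infty}(\mathcal{D})$. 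Assumption~\ref{ass:semilinear-term}(iii) only supplies $f\in W^{s+\lceil d/p\rceil+1+\sigma,\infty}$, so (b) forces $\sigma'\le\sigma-(\lceil d/p\rceil+1)$; feeding this into (a) gives $\sigma>s+3(\lceil d/p\rceil+1)$, strictly stronger than the paper's hypothesis $\sigma>s+\lceil d/p\rceil+1$. Your remark that there is ``still positive smoothness budget over $\tilde s$'' is true but beside the point: the lemma does not merely ask that some smoothness remain above $\tilde s$, it asks that the residual $\sigma'$ itself exceed $\tilde s+\lceil d/p\rceil+1$, and that budget is not there. Enlarging $k$ cannot cure this, because the binding constraint couples $\sigma$ to $s$ and $\lceil d/p\rceil$ only and never involves $k$.

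A repair in the spirit of your idea does exist, but it is not the one you wrote. The proof of Lemma~\ref{lem:simple_composition_lemma} in fact bounds $\|D^\beta(f\circ g)\|_{L^\infty(\Omega_1)}$ first and then converts to $L^p$ using finiteness of $\operatorname{Vol}(\Omega_1)$; as a byproduct it already gives $\|f\circ g\|_{W^{k,\infty}(\Omega_1)}\lesssim \|A\|_{op}^{k}\|f\|_{W^{k',p}(\Omega_2)}$. Carrying this through Lemma~\ref{lem:wavelet-sob-rate-app} at the \emph{original} order $s$ therefore yields a $W^{s,\infty}(\mathcal{D})$-estimate on the truncation error with no need to elevate $s$ at all, and hence no extra demands on $\sigma$. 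With that observation your triangle-inequality conclusion goes through cleanly and gives a quantitative version of Lemma~\ref{lem:boundness-G-gamma-N-M} (with an explicit $N$, not merely existence of a subsequence), which is actually sharper than what the paper records. As stated, however, the proposal does not close.
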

\begin{proof}
In the proof of Lemma~\ref{lem:expansion-Green}, we have showed that 
\begin{align*}
&
\lim_{N \to \infty}\,
\|
\Psi_{\mathcal{L}} -
\Psi_{\mathcal{L},N} \|_{W^{s,p}(\mathcal{D}\times \mathcal{D}; \mathbb{R})}=0.
\end{align*}
By the triangle inequality, we have that for large $N \in \mathbb{N}$
\begin{align*}
&
\| 
\Psi_{\mathcal{L},N} \|_{W^{s, \infty}(\mathcal{D} \times \mathcal{D}; \mathbb{R})}
\leq 
\| \Psi_{\mathcal{L}} \|_{W^{s, \infty}(\mathcal{D} \times \mathcal{D}; \mathbb{R})} + \| 
\Psi_{\mathcal{L},N} - \Psi_{\mathcal{L}} \|_{W^{s, \infty}(\mathcal{D} \times \mathcal{D}; \mathbb{R})}
\leq 
2 \| \Psi_{\mathcal{L}} \|_{W^{s, \infty}(\mathcal{D} \times \mathcal{D}; \mathbb{R})} \leq C_4.
\end{align*}
The estimation for $\alpha^{(h)}_{N}$ follow Mutatis mutandis.
\end{proof}

\begin{lemma}
\label{lem:step2}
Recall the definition of $T_{N}$ in~\eqref{eq:definition_truncarted__T}.
There is a constant $C_5>0$, depending only on $s, p, d, \mathcal{D}, \alpha, \mathcal{L}$, such that:
for any $u \in B_{W^{s,\infty}(\mathcal{D}; \mathbb{R})}(0, \delta)$ the following estimate holds
\[
    \|T(u)-T_{N}(u)\|_{W^{s,p}(\mathcal{D}; \mathbb{R})} \leq C_5\, \epsilon. 
\] 
\end{lemma}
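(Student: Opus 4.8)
The plan is to expand $T(u)-T_{N_k}(u)$ into pieces that isolate the two sources of error — the truncation of the semilinear coefficients $\partial_z^h\alpha(\cdot,0)$ and the truncation of the smooth part $\Psi_\gamma$ of the Green's function — and then estimate each piece by reusing the duality/Green's-function bounds already established in the proof of Lemma~\ref{lem:sol-semilinear}, in particular the chain of inequalities in~\eqref{eq:deriv-fixed-point-map}. First, since $\|u\|_{W^{s,\infty}(\mathcal{D})}\le\delta<\delta_0$, Assumption~\ref{ass:semilinear-term}~(i) lets me write $\alpha(y,u(y))=\sum_{h=2}^{H}\frac{\partial_z^h\alpha(y,0)}{h!}\,u(y)^h$ pointwise, so $T(u)$ has the same polynomial form as $T_{N_k}(u)$ and the two $w_g$ contributions cancel in the difference. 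Because the singular parts $\Phi_\gamma(x-y)$ in $G_\gamma$ and in $G_{\gamma,N_k}$ coincide, $G_\gamma-G_{\gamma,N_k}=\Psi_\gamma-\Psi_{\gamma,N_k}$, and the two-term telescoping $G_\gamma\,\partial_z^h\alpha-G_{\gamma,N_k}\,\alpha^{(h)}_{N_k}=G_\gamma\,(\partial_z^h\alpha-\alpha^{(h)}_{N_k})+(\Psi_\gamma-\Psi_{\gamma,N_k})\,\alpha^{(h)}_{N_k}$ yields
\[
T(u)-T_{N_k}(u)=\underbrace{\sum_{h=2}^{H}\tfrac1{h!}\!\int_{\mathcal{D}}\!G_\gamma(x,y)\bigl(\partial_z^h\alpha(y,0)-\alpha^{(h)}_{N_k}(y)\bigr)u(y)^h\,dy}_{(I)}+\underbrace{\sum_{h=2}^{H}\tfrac1{h!}\!\int_{\mathcal{D}}\!\bigl(\Psi_\gamma-\Psi_{\gamma,N_k}\bigr)(x,y)\,\alpha^{(h)}_{N_k}(y)u(y)^h\,dy}_{(II)}-\underbrace{\int_{\mathcal{D}}\!\bigl(\Psi_\gamma-\Psi_{\gamma,N_k}\bigr)(x,y)\,f_0(y)\,dy}_{(III)}.
\]

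For $(I)$ I would repeat the estimate~\eqref{eq:deriv-fixed-point-map} with $F_h\eqdef(\partial_z^h\alpha(\cdot,0)-\alpha^{(h)}_{N_k})\,u^h$ in place of $\partial_z^h\alpha(\cdot,0)\,w^h$: split $G_\gamma=\Phi_\gamma+\Psi_\gamma$; for the convolution part move the at most $s$ derivatives of $\Phi_\gamma(x-\cdot)$ onto $F_h$ by integration by parts and bound $|\langle\Phi_\gamma(x-\cdot),\partial^\beta F_h\rangle|\le\|\Phi_\gamma(x-\cdot)\|_{W^{-s,p'}_y(\mathcal{D})}\|F_h\|_{W^{s,p}(\mathcal{D})}\lesssim\|\Phi_\gamma(x-\cdot)\|_{L^{p'}_y(\mathcal{D})}\|F_h\|_{W^{s,p}(\mathcal{D})}$, which is $\lesssim\|F_h\|_{W^{s,p}(\mathcal{D})}$ uniformly in $x$ by Lemma~\ref{lem:Linfty-Lp-Green}; for the smooth part use $\|\partial_x^\beta\Psi_\gamma(x,\cdot)\|_{L^{p'}_y(\mathcal{D})}\lesssim\|\Psi_\gamma\|_{W^{s,\infty}(\mathcal{D}\times\mathcal{D})}$ directly. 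Since $\mathcal{D}$ is bounded this gives $\|(I)\|_{W^{s,p}(\mathcal{D})}\lesssim\sum_{h=2}^{H}\tfrac1{h!}\|F_h\|_{W^{s,p}(\mathcal{D})}$, and then the Banach-algebra property of $W^{s,\infty}(\mathcal{D})$ together with boundedness of the product $W^{s,p}\times W^{s,\infty}\to W^{s,p}$ gives $\|F_h\|_{W^{s,p}(\mathcal{D})}\lesssim\|\partial_z^h\alpha(\cdot,0)-\alpha^{(h)}_{N_k}\|_{W^{s,p}(\mathcal{D})}\,\|u\|_{W^{s,\infty}(\mathcal{D})}^h$; invoking~\eqref{eq:est:G-G-N-M-3}, $\|u\|_{W^{s,\infty}}\le\delta<1$, and the finiteness of $H$ yields $\|(I)\|_{W^{s,p}(\mathcal{D})}\lesssim\epsilon$.

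Terms $(II)$ and $(III)$ are simpler, since the kernel $\Psi_\gamma-\Psi_{\gamma,N_k}$ already carries the whole error in $W^{s,p}(\mathcal{D}\times\mathcal{D})$ and no integration by parts is needed: for $|\beta|\le s$ one has $\partial_x^\beta\!\int_{\mathcal{D}}(\Psi_\gamma-\Psi_{\gamma,N_k})(x,y)F(y)\,dy=\int_{\mathcal{D}}\partial_x^\beta(\Psi_\gamma-\Psi_{\gamma,N_k})(x,y)F(y)\,dy$, so by Hölder in $y$ and Fubini in $x$, $\bigl\|\int_{\mathcal{D}}(\Psi_\gamma-\Psi_{\gamma,N_k})(\cdot,y)F(y)\,dy\bigr\|_{W^{s,p}(\mathcal{D})}\lesssim\|\Psi_\gamma-\Psi_{\gamma,N_k}\|_{W^{s,p}(\mathcal{D}\times\mathcal{D})}\,\|F\|_{L^{p'}(\mathcal{D})}$. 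Taking $F=\alpha^{(h)}_{N_k}u^h$ and using Lemma~\ref{lem:boundness-G-gamma-N-M} ($\|\alpha^{(h)}_{N_k}\|_{W^{s,\infty}(\mathcal{D})}\le C_4$) with $\|u\|_{W^{s,\infty}(\mathcal{D})}\le\delta$ bounds $\|F\|_{L^{p'}(\mathcal{D})}\lesssim C_4\delta^h\lesssim1$, so~\eqref{eq:est:G-G-N-M-2} gives $\|(II)\|_{W^{s,p}(\mathcal{D})}\lesssim\epsilon$; taking $F=f_0$ (with $\|f_0\|_{L^{p'}(\mathcal{D})}\lesssim\delta^2\lesssim1$) gives $\|(III)\|_{W^{s,p}(\mathcal{D})}\lesssim\epsilon$. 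Adding the three bounds and absorbing all constants — which involve only $H$, $C_4$, $\|\Phi_\gamma\|_{L^{p'}}$, $\|\Psi_\gamma\|_{W^{s,\infty}(\mathcal{D}\times\mathcal{D})}$, $|\mathcal{D}|$, the product/algebra constants of $W^{s,p}$ and $W^{s,\infty}$, and $\delta$, all depending only on $s,p,d,\mathcal{D},\alpha,\gamma$ — into a single constant $C_5$ completes the argument.

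The step I expect to require the most care is the convolution part of $(I)$: to run the integration-by-parts/duality estimate I need $F_h$ to behave like an element of $W^{s,p}_0(\mathcal{D})$, which is automatic for $\partial_z^h\alpha(\cdot,0)$ (compactly supported in $\mathcal{D}$ by Assumption~\ref{ass:semilinear-term}~(iii)--(iv)) but requires a remark for the wavelet truncation $\alpha^{(h)}_{N_k}$, whose frame elements may overhang $\partial\mathcal{D}$. I would handle this either by extending $F_h$ by zero (or by a fixed Stein extension) and noting that $\|\Phi_\gamma(x-\cdot)\|_{L^{p'}}$ stays uniformly bounded over a fixed compact neighbourhood of $\overline{\mathcal{D}}$, or by arranging in Lemma~\ref{lem:expansion-Green} that the frame elements are supported in a fixed compact set, so that the convolution estimate of Lemma~\ref{lem:Linfty-Lp-Green} applies verbatim; everything else is routine bookkeeping with estimates already in hand.
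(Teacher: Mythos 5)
Your proof is correct and takes essentially the same route as the paper: split $T(u)-T_{N_k}(u)$ using the Green's function decomposition $G_\gamma=\Phi_\gamma+\Psi_\gamma$, estimate the singular-kernel contributions by the $W^{-s,p'}$--$W^{s,p}_0$ duality pairing and the smooth-kernel contributions directly, then invoke the wavelet approximation rates from Lemma~\ref{lem:expansion-Green} together with the algebra property of $W^{s,\infty}$. Your telescoping $G_\gamma(\partial_z^h\alpha-\alpha^{(h)}_{N_k})u^h+(\Psi_\gamma-\Psi_{\gamma,N_k})\alpha^{(h)}_{N_k}u^h$ is a slightly cleaner arrangement than the paper's (whose middle term~\eqref{eq:est-step1-2} mixes both error sources and is dispatched only ``by the similar way''), and your closing remark about ensuring $F_h$ lies in $W^{s,p}_0(\mathcal{D})$ for the duality step flags a genuine point that the paper leaves implicit.
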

\begin{proof}
Let $u \in B_{W^{s,\infty}(\mathcal{D}; \mathbb{R})}(0, \delta)$.
We see that for $\beta \in \mathbb{N}^d_0$ with $|\beta| \leq s$, 

\begin{align}
&
\left| \partial_x^{\beta} T(u)(x) - \partial_x^{\beta} T_{N}(u)(x)\right|
\nonumber
\\
&
\leq 
\sum_{h=2}^{H}
\frac{1}{h!}
\left|
\int_{\mathcal{D}}
\partial_x^{\beta}\Phi_{\mathcal{L}}(x,y) 
\partial_{z}^{h}\alpha(y,0)u(y)^{h} - \partial_x^{\beta}\Phi_{\mathcal{L}}(x,y) \alpha^{(h)}_N(y) u(y)^{h} dy\right|
\label{eq:est-step1-1}
\\
&
+
\sum_{h=2}^{H}
\frac{1}{h!}
\left|
\int_{\mathcal{D}}
\partial_x^{\beta}\Psi_{\mathcal{L}}(x,y) 
\partial_{z}^{h}\alpha(y,0)u(y)^{h} - \partial_x^{\beta}\Psi_{\mathcal{L},N}(x,y) \alpha^{(h)}_N(y) u(y)^{h} dy\right|
\label{eq:est-step1-2}
\\
&
+ \left| \int_{\mathcal{D}} \partial_x^{\beta}\Psi_{\mathcal{L}}(x,y)f_0(y) - \partial_x^{\beta}\Psi_{\mathcal{L},N}(x,y) f_0(y) dy\right|,
\label{eq:est-step1-4}
\end{align}
We estimate (\ref{eq:est-step1-1}) as the following: 
\begin{align*}
&
\left|
\int_{\mathcal{D}}
\partial_x^{\beta}\Phi_{\mathcal{L}}(x,y) 
\partial_{z}^{h}\alpha(y,0)u(y)^{h} - \partial_x^{\beta}\Phi_{\mathcal{L}}(x,y) \alpha^{(h)}_N(y) u(y)^{h} dy\right|
\\
\numberthis
\label{eq:HolderSobolev}
&
\leq 
\|\tilde{\Phi}_{\mathcal{L},\beta}(x,y)  \|_{L^{p'}_y(\mathcal{D};\mathbb{R})}
\| \partial_z^{h}\alpha(y,0)u(y)^{h} - \alpha^{(h)}_N(y) u(y)^{h} \|_{W^{s,p}_y(\mathcal{D};\mathbb{R})}
\\
&
\lesssim \| \partial_z^{h}\alpha(y,0) - \alpha^{(h)}_N(y) \|_{W^{s,p}_y(\mathcal{D};\mathbb{R})} 
,
\end{align*}
We can estimate (\ref{eq:est-step1-2}), and (\ref{eq:est-step1-4}) by the similar way in (\ref{eq:est-step1-1}). 
Then we see that 
\begin{align*}
\left\| \partial_x^{\beta} T(u) - \partial_x^{\beta} T_{N}(u)\right\|_{L^p(\mathcal{D};\mathbb{R})}
\lesssim & \sum_{h=2}^{H} \| \partial_z^{h}\alpha(y,0) - \alpha^{(h)}_N(y) \|_{W^{s,p}_y(\mathcal{D};\mathbb{R})}
\\
& + \| \Psi_{\mathcal{L}} - \Psi_{\mathcal{L},N}  \|_{W^{p, 2}_x(\mathcal{D}\times \mathcal{D};\mathbb{R})}
\lesssim 
\epsilon.
\end{align*}
This completes our proof.
\end{proof}

\begin{lemma}
\label{lem:T-H-N-M-net-ball}
$T_{N}$ maps $B_{W^{s,\infty}(\mathcal{D}; \mathbb{R})}(0, \delta)$ to itself.
\end{lemma}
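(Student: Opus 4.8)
The plan is to mimic, almost verbatim, the argument already used to show that $T$ maps $X_{s,\delta}$ into itself in Lemma~\ref{lem:sol-semilinear}, but now applied to the truncated operator $T_{N_k}$ in~\eqref{eq:definition_truncarted__T}. The key point is that $T_{N_k}$ has exactly the same structural form as $T$: an integral against $G_{\gamma,N_k}(x,y)=\Phi_{\gamma}(x-y)+\Psi_{\gamma,N_k}(x,y)$ of the quantity $\sum_{h=2}^{H}\frac{1}{h!}\alpha^{(h)}_{N_k}(y)\,u(y)^h - f_0(y)$, plus the harmonic lift $w_g(x)$. So I would estimate $\|\partial_x^{\beta}T_{N_k}(u)\|$ for $|\beta|\le s$ by splitting $G_{\gamma,N_k}$ into its singular part $\Phi_\gamma$ and its (now finite-rank, but still smooth) part $\Psi_{\gamma,N_k}$, exactly as in~\eqref{eq:deriv-fixed-point-map}.

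First I would move the $\beta$-derivative off $\Phi_\gamma$ via $\partial_x^\beta\Phi_\gamma(x-y)=(-1)^{|\beta|}\partial_y^\beta\Phi_\gamma(x-y)$ and apply the generalized H\"older inequality for Sobolev spaces, $|\langle u,v\rangle|\le \|u\|_{W^{s,p}}\|v\|_{W^{-s,p'}}$, using Lemma~\ref{lem:Linfty-Lp-Green} to bound $\|\Phi_\gamma(x-\cdot)\|_{W^{-s,p'}_y(\mathcal D)}\le \|\Phi_\gamma(x-\cdot)\|_{L^{p'}_y(\mathcal D)}\lesssim 1$ uniformly in $x$. For the $\Psi_{\gamma,N_k}$ term I would use the uniform-in-$k$ bound $\|\Psi_{\gamma,N_k}\|_{W^{s,\infty}(\mathcal D\times\mathcal D)}\le C_4$ from Lemma~\ref{lem:boundness-G-gamma-N-M}, and likewise $\sum_h\|\alpha^{(h)}_{N_k}\|_{W^{s,\infty}(\mathcal D)}\le C_4$. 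Then since $u\in B_{W^{s,\infty}(\mathcal D)}(0,\delta)$ (so $\|u^h\|_{W^{s,p}}\lesssim \delta^h \lesssim \delta^2$ for $h\ge 2$, using that $W^{s,\infty}$ is a Banach algebra-ish control and $\mathcal D$ bounded) and $\|f_0\|_{W^{s,\infty}_0}\le\delta^2$ and $\|w_g\|_{W^{s,\infty}}\lesssim\|g\|\le\delta^2$, I would collect everything into a bound of the form $\|T_{N_k}(u)\|_{W^{s,\infty}(\mathcal D)}\le C_7\,\delta^2$ for a constant $C_7>0$ depending only on $s,p,d,\mathcal D,\alpha,\gamma$ (this is precisely the constant promised in~\eqref{eq:C-7} referenced by Assumption~\ref{ass:choice-delta-p}).

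Finally, invoking Assumption~\ref{ass:choice-delta-p}, which stipulates $C_7\delta<1$, I get $\|T_{N_k}(u)\|_{W^{s,\infty}(\mathcal D)}\le C_7\delta^2 = (C_7\delta)\delta < \delta$, so $T_{N_k}(u)\in B_{W^{s,\infty}(\mathcal D)}(0,\delta)$, which is the claim. The one genuinely delicate point — the main obstacle — is to make sure all the implied constants in the $\Psi_{\gamma,N_k}$-term and the $\alpha^{(h)}_{N_k}$-term are \emph{uniform in $k$} (equivalently in $N_k$); this is exactly what Lemma~\ref{lem:boundness-G-gamma-N-M} was set up to provide, so the proof is essentially a matter of carefully bookkeeping those estimates rather than introducing any new idea. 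A secondary care point is handling the product $\alpha^{(h)}_{N_k}(y)u(y)^h$ in $W^{s,p}$: one uses the multiplicative (Leibniz/Moser-type) inequality $\|fg\|_{W^{s,p}}\lesssim \|f\|_{W^{s,\infty}}\|g\|_{W^{s,p}}+\|f\|_{W^{s,p}}\|g\|_{W^{s,\infty}}$ together with $\|u^h\|_{W^{s,p}}\lesssim \|u\|_{W^{s,\infty}}^{h-1}\|u\|_{W^{s,p}}\lesssim \delta^{h-1}\cdot\delta$, exactly as implicitly done in~\eqref{eq:deriv-fixed-point-map}.
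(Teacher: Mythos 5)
Your proposal is correct and tracks the paper's own proof essentially line for line: split $G_{\gamma,N_k}=\Phi_\gamma+\Psi_{\gamma,N_k}$, transfer the derivative onto the $y$-variable of $\Phi_\gamma$ and apply the $W^{s,p}$--$W^{-s,p'}$ duality together with Lemma~\ref{lem:Linfty-Lp-Green}, bound the $\Psi_{\gamma,N_k}$ and $\alpha^{(h)}_{N_k}$ contributions via the uniform-in-$k$ estimates of Lemma~\ref{lem:boundness-G-gamma-N-M}, and close with $\|T_{N_k}(u)\|_{W^{s,\infty}}\le C_7\delta^2<\delta$ via Assumption~\ref{ass:choice-delta-p}. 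You also correctly flag the only genuinely delicate point — uniformity of the truncated-expansion bounds in $k$ — which is precisely the role of Lemma~\ref{lem:boundness-G-gamma-N-M}.
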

\begin{proof}
$u \in B_{W^{s,\infty}(\mathcal{D}; \mathbb{R})}(0, \delta)$.
We see that for $\beta \in \mathbb{N}^d_0$ with $|\beta| \leq s$,  by the similar way in Lemma~\ref{lem:step2}, and using Lemma~\ref{lem:boundness-G-gamma-N-M}
\allowdisplaybreaks
\begin{align*}
&
\left| \partial_x^{\beta} T_{N}(u)(x)\right|
\nonumber
\\
&
\leq 
\sum_{h=2}^{H}
\frac{1}{h!}
\left|
\int_{\mathcal{D}}
\partial_x^{\beta}\Phi_{\mathcal{L}}(x,y) \alpha^{(h)}_N(y) u(y)^{h} dy\right|
+
\sum_{h=2}^{H}
\frac{1}{h!}
\left|
\int_{\mathcal{D}}
\partial_x^{\beta}\Psi_{\mathcal{L}, N}(x,y) 
\alpha^{(h)}_{N}(y) u(y)^{h} dy\right|
\\
&
+ \left| \int_{\mathcal{D}} \partial_x^{\beta}\Phi_{\mathcal{L}}(x,y)f_0(y)dy\right| 
+ \left| \int_{\mathcal{D}} \partial_x^{\beta}\Psi_{\mathcal{L}, N}(x,y)f_0(y)  dy\right| 
+ |\partial_x^{\beta} w_{g}(x)|
\\
&
\leq 
\sum_{h=2}^{H}
\frac{1}{h!}\| \tilde{\Phi}_{\mathcal{L},\beta}(x,y)  \|_{L^{p'}_y(\mathcal{D};\mathbb{R})}
\| \alpha^{(h)}_N(y)u(y)^h \|_{W^{s,p}_y(\mathcal{D};\mathbb{R})}
\\
&
+\sum_{h=2}^{H}
\frac{1}{h!}
\| \partial_x^{\beta}\Psi_{\mathcal{L},N}(x,y)  \|_{L^{p'}_y(\mathcal{D};\mathbb{R})}
\| \alpha^{(h)}_N(y)u(y)^h \|_{W^{s,p}_y(\mathcal{D};\mathbb{R})}
\\
&
+ \| \tilde{\Phi}_{\mathcal{L},\beta}(x,y)  \|_{L^{p'}_y(\mathcal{D};\mathbb{R})}
\| f_0(y) \|_{W^{s,p}_y(\mathcal{D};\mathbb{R})}
\\
& + 
\| \partial_x^{\beta}\Psi_{\mathcal{L},N}(x,y)  \|_{L^{p'}_y(\mathcal{D};\mathbb{R})}
\| f_0(y) \|_{W^{s,p}_y(\mathcal{D};\mathbb{R})}
+
\| w_g\|_{W^{s,\infty}(\mathcal{D};\mathbb{R})}
\lesssim \delta^2.
\end{align*}
Thus, we have that 
\begin{equation}
\label{eq:C-7}
\|T_{N}(u)\|_{W^{s,\infty}(\mathcal{D}; \mathbb{R})} \leq C_7 \delta^2,
\end{equation}
where $C_7>0$ is a constant depending on $s, p, d, \mathcal{D}, \alpha, \mathcal{L}$. By the choice of $\delta$ in Assumption~\ref{ass:choice-delta-p}, we see that $\|T_{N}(u)\|_{W^{s,\infty}(\mathcal{D}; \mathbb{R})} \leq \delta$.

\end{proof}

\subsubsection{Step 3.2 - Approximation of Contraction}

We define $\Gamma : B_{W^{s,\infty}_0(\mathcal{D})}(0,\delta^2) \times B_{W^{s + (d+1)/2,2}(\partial \mathcal{D}; \mathbb{R})}(0, \delta^2) \to W^{s,p}(\mathcal{D}; \mathbb{R})$ by 
\[
\Gamma(f_0, g) \eqdef  \underbrace{T_{N} \circ \cdots \circ T_{N}}_{J}(0)
\eqdef \left(\bigcirc_{j=1}^{J}T_{N} \right)(0).
\]

\begin{lemma}
Let $J =\lceil \frac{\log (1/\epsilon)}{\log (1/\rho)} \rceil \in \mathbb{N}$. 
Then, there exist a constant $C_8>0$ depending on $p, d, \mathcal{D}, \alpha, \mathcal{L}$ such that for all $(f_0, g) \in B_{W^{s,\infty}_0(\mathcal{D})}(0,\delta^2) \times B_{W^{s + (d+1)/2,2}(\partial \mathcal{D}; \mathbb{R})}(0, \delta^2)$
\[
\| \Gamma^{+}(f_0,g) - \Gamma(f_0,g) \|_{W^{s,p}(\mathcal{D})}
\leq C_8 \epsilon.
\]
\end{lemma}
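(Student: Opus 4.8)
The plan is to run the standard perturbed-fixed-point estimate, splitting the error into a ``truncation'' part, controlled by Lemma~\ref{lem:step2}, and a ``finitely-many-iterations'' part, controlled by the $\rho$-contractivity of $T$ from Lemma~\ref{lem:sol-semilinear}. Write $u\eqdef \Gamma^{+}(f_0,g)$, so that $Tu=u$ and $u\in X_{s,\delta}$; hence $\|u\|_{W^{s,p}(\mathcal{D};\mathbb{R})}\lesssim \|u\|_{W^{s,\infty}(\mathcal{D};\mathbb{R})}\le \delta\eqdef C_6'$, where the implicit constant depends only on $|\mathcal{D}|$ and a combinatorial factor counting multi-indices of order $\le s$; set $C_6\eqdef C_6'\cdot(\text{that constant})$. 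Also put $a_j\eqdef T^{j}(0)$ and $b_j\eqdef T_{N_k}^{j}(0)$, so that $\Gamma(f_0,g)=b_J$. By Lemma~\ref{lem:sol-semilinear} each $a_j\in X_{s,\delta}$, and by Lemma~\ref{lem:T-H-N-M-net-ball} each $b_j\in B_{W^{s,\infty}(\mathcal{D};\mathbb{R})}(0,\delta)\subseteq X_{s,\delta}$; the starting point $0$ lies in both sets since $\delta>0$.

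First I would bound $\|u-a_J\|_{W^{s,p}(\mathcal{D};\mathbb{R})}$. Since $Tu=u$ and $T$ is $\rho$-Lipschitz on $X_{s,\delta}$, iterating gives $\|u-a_J\|_{W^{s,p}(\mathcal{D};\mathbb{R})}=\|T^{J}u-T^{J}0\|_{W^{s,p}(\mathcal{D};\mathbb{R})}\le \rho^{J}\|u\|_{W^{s,p}(\mathcal{D};\mathbb{R})}\le C_6\,\rho^{J}$. With $J=\lceil \log(1/\epsilon)/\log(1/\rho)\rceil$ one has $J\log(1/\rho)\ge \log(1/\epsilon)$, hence $\rho^{J}\le \epsilon$, so this term is $\le C_6\,\epsilon$. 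Next I would bound $\|a_J-b_J\|_{W^{s,p}(\mathcal{D};\mathbb{R})}$ by a telescoping recursion: for each $j\ge 1$, write $a_j-b_j=\big(T(a_{j-1})-T(b_{j-1})\big)+\big(T(b_{j-1})-T_{N_k}(b_{j-1})\big)$; the first summand has $W^{s,p}$-norm at most $\rho\|a_{j-1}-b_{j-1}\|_{W^{s,p}(\mathcal{D};\mathbb{R})}$ because $a_{j-1},b_{j-1}\in X_{s,\delta}$, and the second has $W^{s,p}$-norm at most $C_5\,\epsilon$ by Lemma~\ref{lem:step2} because $b_{j-1}\in B_{W^{s,\infty}(\mathcal{D};\mathbb{R})}(0,\delta)$. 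Since $a_0=b_0=0$, unrolling yields $\|a_J-b_J\|_{W^{s,p}(\mathcal{D};\mathbb{R})}\le C_5\,\epsilon\sum_{j=0}^{J-1}\rho^{j}\le \tfrac{C_5}{1-\rho}\,\epsilon$.

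Combining the two bounds by the triangle inequality gives $\|\Gamma^{+}(f_0,g)-\Gamma(f_0,g)\|_{W^{s,p}(\mathcal{D};\mathbb{R})}\le \big(C_6+\tfrac{C_5}{1-\rho}\big)\epsilon\eqdef C_8\,\epsilon$, with $C_8$ depending only on $s,p,d,\mathcal{D},\alpha,\gamma$ (through $C_5$, $\rho$, and $\delta$), which is the claim; moreover $J\in\mathcal{O}(\log(1/\epsilon))$ since $\rho$ is a fixed constant in $(0,1)$, giving the logarithmic depth advertised in Theorem~\ref{thm:semilinear}.

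There is no genuine analytic obstacle here; the only points requiring care are bookkeeping ones. First, one must verify that every iterate $a_j$ and every iterate $b_j$ remains inside the set on which, respectively, the $\rho$-contractivity of $T$ (Lemma~\ref{lem:sol-semilinear}) and the truncation estimate of Lemma~\ref{lem:step2} are valid — this is precisely the role played by the invariance statements in Lemmata~\ref{lem:sol-semilinear} and~\ref{lem:T-H-N-M-net-ball}. Second, one uses the elementary estimate $\rho^{\lceil \log(1/\epsilon)/\log(1/\rho)\rceil}\le \epsilon$, which is what converts the geometric decay of the iteration into the $\mathcal{O}(\log(1/\epsilon))$ bound on the number $J$ of composed layers $T_{N_k}$.
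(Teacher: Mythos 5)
Your proof is correct and follows the paper's argument essentially verbatim: both split the error into a pure-iteration term $\|u - T^J(0)\|$ controlled by the $\rho$-contractivity of $T$ from Lemma~\ref{lem:sol-semilinear} (giving $\rho^J\delta \lesssim \epsilon$ once $J$ is chosen logarithmically), and a perturbation term $\|T^J(0) - T_{N_k}^J(0)\|$ controlled by the per-step estimate of Lemma~\ref{lem:step2} together with Lemma~\ref{lem:T-H-N-M-net-ball} to keep the $T_{N_k}$-iterates in the set where that estimate applies, with the geometric series yielding $\tfrac{C_5}{1-\rho}\epsilon$. The only cosmetic difference is that you phrase the telescoping as the one-step recursion $e_j \le \rho\,e_{j-1} + C_5\,\epsilon$ rather than writing out the hybrid-composition sum $\sum_{h=1}^{J}\rho^{J-h}\|T(u_h)-T_{N_k}(u_h)\|$ explicitly, but unrolling your recursion reproduces exactly that sum.
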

\begin{proof}
From Lemma~\ref{lem:sol-semilinear}, $T:X_{s,\delta} \to X_{s,\delta}$ is $\rho$-contraction mapping, which implies that 
we find that
\begin{align}
\|\Gamma^{+}(f_0,g) - \left(\bigcirc_{j=1}^{J}T \right)(0) \|_{W^{s,p}(\mathcal{D}; \mathbb{R})} 
& = 
\|\left(\bigcirc_{j=1}^{J}T \right)(u) - \left(\bigcirc_{j=1}^{J}T \right)(0) \|_{W^{s,p}(\mathcal{D}; \mathbb{R})} 
\nonumber
\\
&
\leq  
\rho^{J}\|u-0\|_{W^{s,p}(\mathcal{D}; \mathbb{R})}
\nonumber
\\
& \lesssim 
\rho^{J}\|u\|_{W^{s,\infty}(\mathcal{D}; \mathbb{R})}
\nonumber
\\
& \leq
\rho^{J} \delta
\lesssim \epsilon,
\label{eq:est-net-N-1}
\end{align} 
where $u$ is a unique solution of \eqref{eq:integral-semilinear} in $X_{s,\delta}$.
Next, we see that 
\begin{align}
&
\|\left(\bigcirc_{j=1}^{J}T \right)(0)  - \Gamma(f_0,g) \|_{W^{s,p}(\mathcal{D}; \mathbb{R})} 
= 
\|\left(\bigcirc_{j=1}^{J}T \right)(0) - \left(\bigcirc_{j=1}^{J}T_{N} \right)(0)  \|_{W^{s,p}(\mathcal{D}; \mathbb{R})}
\nonumber
\\ 
& 
\leq 
\sum_{h=1}^{J}
\|\left(\bigcirc_{j=h}^{J}T \circ \bigcirc_{j=1}^{h-1}T_{N} \right)(0) - \left(\bigcirc_{j=h+1}^{J}T \circ \bigcirc_{j=1}^{h}T_{N} \right)(0)  \|_{W^{s,p}(\mathcal{D}; \mathbb{R})}
\nonumber
\\
&
\leq 
\sum_{h=1}^{J} 
\rho^{J-h} 
\|\left(T \circ \bigcirc_{j=1}^{h-1}T_{N} \right)(0) - \left(T_{N} \circ \bigcirc_{j=1}^{h-1}T_{N} \right)(0)  \|_{W^{s,p}(\mathcal{D}; \mathbb{R})}
\nonumber
\\
&
=
\sum_{h=1}^{J} 
\rho^{J-h} 
\|T(u_{h}) - T_{N}(u_{h})  \|_{W^{s,p}(\mathcal{D}; \mathbb{R})},
\label{eq:est-net-N-2}
\end{align}
where, we see that, by Lemma~\ref{lem:T-H-N-M-net-ball}
$$
u_{h}  \eqdef  \bigcirc_{j=1}^{h-1}T_{N}(0) \in B_{W^{s,\infty}(\mathcal{D}; \mathbb{R})}(0, \delta).
$$
Note that if $h=1$ or $h=J$, we denote by $\bigcirc_{j=1}^{h-1}T_{N}=Id$ or $\bigcirc_{j=h+1}^{J}T_{N} = Id$, respectively; where $Id$ denotes the identity map on $B_{W^{s,\infty}_0(\mathcal{D})}(0,\delta^2) \times B_{W^{s + (d+1)/2,2}(\partial \mathcal{D}; \mathbb{R})}(0, \delta^2)$.
By Lemma~\ref{lem:step2}, we see that 
\begin{align*}
&
\|T(u) - T_{N}(u)  \|_{W^{s,p}(\mathcal{D}; \mathbb{R})}
\leq C_5 \epsilon,
\end{align*}
which implies that with (\ref{eq:est-net-N-2})
\begin{align}
\label{eq:est-net-N-3}
&
\|\left(\bigcirc_{j=1}^{J}T \right)(0)  - \Gamma(f_0,g) \|_{W^{s,p}(\mathcal{D}; \mathbb{R})} 
\leq 
\sum_{h=1}^{J} \rho^{J-h}
C_5 \epsilon
\leq 
\sum_{h=0}^{\infty} \rho^{h}
C_5 \epsilon
=
\frac{C_5}{1-\rho}
\epsilon
\lesssim \epsilon.
\end{align}
Thus, by (\ref{eq:est-net-N-1}) and (\ref{eq:est-net-N-3}), we conclude that 
\begin{align*}
\|\Gamma^{+}(f_0,g) - \Gamma(f_0,g) \|_{W^{s,p}(\mathcal{D}; \mathbb{R})} 
&
\leq 
\|\Gamma^{+}(f_0,g) - \left(\bigcirc_{j=1}^{J}T \right)(0) \|_{W^{s,p}(\mathcal{D}; \mathbb{R})} 
\\
& +
\|\left(\bigcirc_{j=1}^{J}T \right)(0) -  \Gamma(f_0,g)\|_{W^{s,p}(\mathcal{D}; \mathbb{R})} 
\lesssim \epsilon.
\end{align*}
{This concludes our proof.}
\end{proof}
\subsubsection{Step 3.3 - Neural Operator Representation of $\Gamma$}
Having shown that  $\Gamma$, which approximately implemented $\Gamma^{+}$ on \\
$B_{W^{s,\infty}_0(\mathcal{D})}(0,\delta^2) \times B_{W^{s + (d+1)/2,2}(\partial \mathcal{D}; \mathbb{R})}(0, \delta^2)$, it is enough to show that $\Gamma$ can be implemented by our neural operator, Definition~\ref{def:neural-operator}, in order to conclude our results.  In particular, an explicit construction of this representation will yield quantitative approximation estimates, when combined with the exponential self-composition of the previous step and the rank truncation from step $1$.  

For maximal legibility, let us begin by recalling that $\Gamma$ is defined by
\[
\Gamma(u_0)= \underbrace{T_{N} \circ \cdots \circ T_{N}}_{J}(0)
= \left(\bigcirc_{j=1}^{J}T_{N} \right)(0).
\]
where the operator $T_{N}$ is defined by
\begin{align*}
T_{N}(u)(x)
&
=
\sum_{h=2}^{H}
\frac{1}{h!}
\int_{\mathcal{D}}G_{\mathcal{L}, N}(x,y)\alpha^{(h)}_N(y) 
(u(y))^{h} dy
-\int_{\mathcal{D}}G_{\mathcal{L}, N}(x,y)f_0(y) dy
+ w_{g}(x).
\end{align*}

It holds that from (\ref{eq:est:G-G-N-M-2}) and (\ref{eq:est:G-G-N-M-3})
\begin{align*}
&
\sum_{h=2}^{H}\| \partial_z^{h}\alpha(y,0) - \alpha^{(h)}_N(y) \|_{W^{s, p}_y(\mathcal{D};\mathbb{R})} + \| \Psi_{\mathcal{L}} - \Psi_{\mathcal{L},N}  \|_{W^{s, p}_x(\mathcal{D}\times \mathcal{D};\mathbb{R})} \lesssim N^{-\kappa/d} \leq \epsilon.
\end{align*}

Next, by using Proposition~\ref{prop:ReQU_ResNetImplementation}, there exists a ReQU-ResNet $F_{net}^{(h)}: \mathbb{R}^2 \to \mathbb{R}$ such that
\[
F_{net}^{(h)}(x,y)=x y^h, \quad x,y \in \mathbb{R}.
\]
Note that, the size of $F_{net}^{(h)}$ depends on $H$. 
We see that

\begin{align*}
&
T_{N}(u)(t,x)
\\
&
=
\sum_{h=2}^{H} \frac{1}{h!} \int_{\mathcal{D}}G_{\mathcal{L}, N}(x,y)  
F_{net}^{(h)}( \alpha^{(h)}_{N}(y), u(y)) dy
-\int_{\mathcal{D}}G_{\mathcal{L}, N}(x,y)f_0(y) dy + w_{g}(x)
\\
&
= 
\sum_{h=2}^{H} \frac{1}{h!} 
\int_{\mathcal{D}} \Phi_{\mathcal{L}}(x,y) F_{net}^{(h)}(\alpha^{(h)}_{N}(y), u(y))dy
+
\sum_{h=2}^{H} \frac{1}{h!} 
\sum_{n,m\leq N} c_{n,m} \langle \varphi^{\uparrow}_m \circ A, F_{net}^{(h)}(\alpha^{(h)}_{N}, u) \rangle \varphi^{\uparrow}_n(A(x))
\\
&
- \int_{\mathcal{D}} \Phi_{\mathcal{L}}(x,y) f_0(y)dy
-
\sum_{n,m \in {\Lambda_{N}}_k}  
c_{n,m} \langle \varphi^{\uparrow}_m \circ A, f_0 \rangle \varphi^{\uparrow}_n(A(x)) + w_{g}(x)
\\
&
= 
\sum_{h=2}^{H} 
\tilde{c}^{(h)}\int_{\mathcal{D}} \Phi_{\mathcal{L}}(x,y) F_{net}^{(h)}(\alpha^{(h)}_{N}(y), u(y))dy
+
\sum_{h=2}^{H}
\sum_{n,m\leq N} \tilde{c}_{n,m}^{(h)} \langle \varphi^{\uparrow}_m \circ A, F_{net}^{(h)}(\alpha^{(h)}_N, u) \rangle \varphi^{\uparrow}_n(A(x))
\\
&
- \int_{\mathcal{D}} \Phi_{\mathcal{L}}(x,y) f_0(y)dy
-
\sum_{n,m\leq N} 
\tilde{c}_{n,m} \langle \varphi^{\uparrow}_m \circ A, f_0 \rangle \varphi^{\uparrow}_n(A(x)) + w_{g}(x),
\end{align*}
where 
\[
\tilde{c}^{(h)} 
 \eqdef \frac{1}{h!}
,
\]
\[
\tilde{c}_{n,m}^{(h)} 
 \eqdef 
\left\{
\begin{array}{ll}
\frac{1}{h!}c_{n,m} & n,m \in {\Lambda_{N}}_k \\
0 & \text{otherwise}
\end{array}
\right.,
\]
\[
\tilde{c}_{n,m} 
 \eqdef 
\left\{
\begin{array}{ll}
c_{n,m} & n,m \in {\Lambda_{N}}_k \\
0 & \text{otherwise}
\end{array}
\right.,
\]
and $\alpha^{(h)}_N=\sum_{n \in \Lambda_{N}} \tilde{\alpha}_n^{(h)}\varphi^{\uparrow}_n \circ A$ where 
$
\tilde{\alpha}_{n}^{(h)}
 \eqdef 
\alpha_{n}^{(h)} I_{n\le N}
$ 
and $I_{n\le N}$ maps any integer $n$ to $1$ if $n\le N$ and to $0$ otherwise (i.e.\ it is the indicator function of the set $\{n\in \mathbb{Z}:\,n\le N\}$).

We denote by
\[
\tilde{u}_{f_0,g}(x) \eqdef -
\int_{\mathcal{D}} \Phi_{\mathcal{L}}(x,y) f_0(y)dy
-
\sum_{n,m\leq N} 
\tilde{c}_{n,m} \langle \varphi^{\uparrow}_m \circ A, f_0 \rangle \varphi^{\uparrow}_n(A(x)) + w_{g}(x).
\]
Then, we see that $ \Gamma(f_0, g)(x) = v_{J}(x) $
where 
\begin{align*}
&
v_{j+1}(x)
= %
\tilde{u}_{f_0,g}(x)
+
\sum_{h=2}^{H} 
\tilde{c}^{(h)}\int_{\mathcal{D}} \Phi_{\mathcal{L}}(x,y) F_{net}^{(h)}(\alpha^{(h)}_{N}(y), u(y))dy
\\
&
+
\sum_{h=2}^{H}
\sum_{n,m\leq N} \tilde{c}_{n,m}^{(h)} \langle \varphi^{\uparrow}_m \circ A, F_{net}^{(h)}(\alpha^{(h)}_N, v_j) \rangle \varphi^{\uparrow}_n(A(x)),
\quad j=0,...,J-1,
\end{align*}
where $v_{0} \eqdef 0$.
Define $
\tilde{W}_0 = 
\big(
I_{i\le 2,j=2}
\big)_{i,j=1}^{H+1,2}
\in \mathbb{R}^{(H+1) \times 2}$,
where $I_{i\le 2,j=2}$ is the indicator function of the set $\{(i,j)\in \mathbb{N}_+:\, i\le 2 \mbox{ and } j=2\}$. 
Define $\tilde{K}^{(0)}_N : L^{p}(\mathcal{D};\mathbb{R})^2 \to W^{s,p}(\mathcal{D};\mathbb{R})^{H+1}$ by 
\[
(\tilde{K}^{(0)}_N
\begin{pmatrix}
f_{0} \\
w_{g}
\end{pmatrix}
)(x)  \eqdef 
\sum_{n,m \in \Lambda_{N}}
\tilde{C}_{n,m}^{(0)}\langle \varphi^{\uparrow}_m \circ A, \begin{pmatrix}
f_{0} \\
w_{g}
\end{pmatrix} \rangle
\varphi^{\uparrow}_n(A(x)),
\]
where $\tilde{C}_{n,m}^{(0)} = \big(-\tilde{c}_{n,m}I_{i\le 2,j=1}\big)_{i,j=1}^{H+1,2},
\in \mathbb{R}^{(H+1) \times 2}$.  Define $\tilde{K}^{(0)} : L^{p}(\mathcal{D};\mathbb{R})^2 \to W^{s,p}(\mathcal{D};\mathbb{R})^{H+1}$ by 
\[
(\tilde{K}^{(0)}
\begin{pmatrix}
f_{0} \\
w_{g}
\end{pmatrix}
)(x)  \eqdef 
\tilde{C}^{(0)} \int_{\mathcal{D}} \Phi_{\mathcal{L}}(x,y) \begin{pmatrix}
f_{0} \\
w_{g}
\end{pmatrix}(y) dy ,
\]
where $\tilde{C}^{(0)} = 
\big(-I_{i\le 2,j=1}\big)_{i,j=1}^{H+1,2}
\in \mathbb{R}^{(H+1) \times 2}$.  
We also define by 
\[
\tilde{b}^{(0)}_N(x) \eqdef (0, 0, \alpha_N^{(2)}(x), ..., \alpha_N^{(H)}(x))^{T},
\]
We therefore compute
\[
\tilde{W}^{(0)}
\begin{pmatrix}
f_{0}(x) \\
w_{g}(x)
\end{pmatrix}
+ 
\Biggl(\tilde{K}^{(0)}_{N}
\begin{pmatrix}
f_{0} \\
w_{g}
\end{pmatrix}
\Biggr)(x) 
+ 
\Biggl(\tilde{K}^{(0)}
\begin{pmatrix}
f_{0} \\
w_{g}
\end{pmatrix}
\Biggr)(x) 
+ \tilde{b}^{(0)}_N(x) 
=
\begin{pmatrix}
\tilde{u}_{f_0,g}(x) \\
\tilde{u}_{f_0,g}(x) \\
\alpha_N^{(2)}(x) \\
\vdots\\
\alpha_N^{(H)}(x)
\end{pmatrix}
=
\begin{pmatrix}
\tilde{u}_{f_0,g}(x) \\
v_{1}(x) \\
\alpha_N^{(2)}(x) \\
\vdots\\
\alpha_N^{(H)}(x)
\end{pmatrix},
\]
Next, we define $\tilde{F}_{net} : \mathbb{R}^{H+1} \to \mathbb{R}^{2H-1}$ by
\[
\tilde{F}_{net}(u)
= 
\begin{pmatrix}
F^{(2)}_{net}(u_3, u_{1}) \\
\vdots \\ 
F^{(H)}_{net}(u_{H+1}, u_{1})
\\
u_2
\\
\vdots
\\
u_{H+1}
\end{pmatrix}, \quad
u=(u_1, ..., u_{H+1}) \in \mathbb{R}^3,
\]
which can have an exact implementation by a ReQU-Resnet (see Proposition~\ref{prop:ReQU_ResNetImplementation}).

We define
\[
\tilde{W} = \begin{pmatrix}
0 & \cdots & 0 & 1 & 0 & \cdots & 0 \\
0 & \cdots & 0 & 1 & 0 & \cdots & 0 \\
0 & \cdots & 0 & 0 & 1 & \cdots & 0 \\
& & & \vdots & &  \\
0 & \cdots & 0 & 0 & 0 & \cdots & 1 \\
\end{pmatrix}
\in \mathbb{R}^{(H+1) \times (2H-1)},
\]
and $\tilde{K}_{N} : W^{s,p}(\mathcal{D};\mathbb{R})^{H+1} \to W^{s,p}(\mathcal{D};\mathbb{R})^{3}$, for $u=(u_1,...,u_{H+1}) \in W^{s,p}(\mathcal{D};\mathbb{R})^{H+1}$,
\begin{align*}
&
(\tilde{K}_{N}u)(x)  \eqdef 
\sum_{n\leq N} \sum_{m \leq N} 
\tilde{C}_{n,m} \langle \varphi^{\uparrow}_m \circ A, u \rangle 
\varphi^{\uparrow}_n(A(x)) 
=
\begin{pmatrix}
\sum_{h=2}^{H}
\sum_{n,m\leq N} 
\tilde{c}_{n,m}^{(h)} \langle \varphi^{\uparrow}_m \circ A, u_{h-1} \rangle 
\varphi^{\uparrow}_n(A(x)) \\
0 \\
\vdots\\
0
\end{pmatrix},
\end{align*}
where 
$$
\tilde{C}_{n,m} = 
\begin{pmatrix}
\tilde{c}_{n,m}^{(2)} & \cdots & \tilde{c}_{n,m}^{(H)} & 0 \\
0 & \cdots &  0 & 0 \\ 
& & \vdots & \\ 
0 & \cdots &  0 & 0
\end{pmatrix}
\in
\mathbb{R}^{(H+1) \times (2H-1)},
$$
and $\tilde{K} : W^{s,p}(\mathcal{D};\mathbb{R})^{H+1} \to W^{s,p}(\mathcal{D};\mathbb{R})^{3}$, for $u=(u_1,...,u_{H+1}) \in W^{s,p}(\mathcal{D};\mathbb{R})^{H+1}$,
\begin{align*}
&
(\tilde{K}u)(x)  \eqdef 
\tilde{C} \int_{\mathcal{D}} \Phi_{\mathcal{L}}(x,y) u(y)dy 
= 
\begin{pmatrix}
\sum_{h=2}^{H} 
\tilde{c}^{(h)} \int_{\mathcal{D}} \Phi_{\mathcal{L}}(x,y) u_{h-1}(y)dy \\
0 \\
\vdots\\
0
\end{pmatrix},
\end{align*}
where 
$$
\tilde{C} = 
\begin{pmatrix}
\tilde{c}^{(2)} & \cdots & \tilde{c}^{(H)} & 0 \\
0 & \cdots &  0 & 0 \\ 
& & \vdots & \\ 
0 & \cdots &  0 & 0
\end{pmatrix}
\in
\mathbb{R}^{(H+1) \times (2H-1)}
.
$$
Then, we have that for $j=1,...,J-1$
\allowdisplaybreaks
\begin{align*}
&
\left[(\tilde{W} + \tilde{K} + \tilde{K}_{N}) \circ \tilde{F}_{net} 
\begin{pmatrix}
v_{j} \\
\tilde{u}_{f_0,g}
\\
\alpha^{(2)}_N
\\
\vdots
\\
\alpha^{(H)}_N
\end{pmatrix}
\right]
(x)
\\
&
=
\tilde{W} 
\begin{pmatrix}
F^{(2)}_{net}(\alpha^{(2)}_N, v_j(x)) \\
\vdots \\ 
F^{(H)}_{net}(\alpha^{(H)}_N,v_j(x))
\\
\tilde{u}_{f_0,g}(x)
\\
\alpha^{(2)}_N(x)
\\
\vdots
\\
\alpha^{(H)}_N(x)
\end{pmatrix}
+ 
\tilde{K}
\begin{pmatrix}
F^{(2)}_{net}(\alpha^{(2)}_N, v_j) \\
\vdots \\ 
F^{(H)}_{net}(\alpha^{(H)}_N, v_j)
\\
\tilde{u}_{f_0,g}
\\
\alpha^{(2)}_N(x)
\\
\vdots
\\
\alpha^{(H)}_N(x)
\end{pmatrix}
(x)
+ 
\tilde{K}_{N}
\begin{pmatrix}
F^{(2)}_{net}(\alpha^{(2)}_N, v_j) \\
\vdots \\ 
F^{(H)}_{net}(\alpha^{(H)}_N, v_j)
\\
\tilde{u}_{f_0,g}
\\
\alpha^{(2)}_N(x)
\\
\vdots
\\
\alpha^{(H)}_N(x)
\end{pmatrix}
(x)
= 
\begin{pmatrix}
v_{j+1}(x) \\
\tilde{u}_{f_0,g}
\\
\alpha^{(2)}_N(x)
\\
\vdots
\\
\alpha^{(H)}_N(x)
\end{pmatrix}.
\end{align*}
\noindent We denote by
\[
\tilde{W}'  \eqdef  (1, 0, \cdots, 0) \in \mathbb{R}^{1\times (H+1)}.
\]
Then, we finally obtain that  
\begin{align*}
\Gamma(f_0, w_g) 
&
=
\tilde{W}' 
\circ
\left[
\underbrace{(\tilde{W} + \tilde{K} + \tilde{K}_{N}) \circ \tilde{F}_{net} \circ \cdots \circ (\tilde{W} + \tilde{K} +\tilde{K}_{N}) \circ \tilde{F}_{net}}_{J-1}
\right]
\\
&
\circ
(\tilde{W}^{(0)} + \tilde{K}^{(0)} + \tilde{K}^{(0)}_{N} + \tilde{b}_N^{(0)} ) (\begin{pmatrix}
f_0 \\
w_g
\end{pmatrix}).
\end{align*}
By the above construction, $\Gamma \in \mathcal{NO}^{L, W, \sigma}_{N,\varphi}(W^{s,p}(\mathcal{D};\mathbb{R})^{2}, W^{s,p}(\mathcal{D};\mathbb{R}))$.
Moreover, depth $L(\Gamma)$ and width $W(\Gamma)$ of neural operator $\Gamma$ can be estimated as 
\begin{align*}
&
L(\Gamma) \lesssim J 
\lesssim \log (\epsilon^{-1})
, \quad
W(\Gamma) \lesssim H \lesssim 1.
\end{align*}
This concludes our proof of Theorem~\ref{thrm:Main_EllipticPDESemilinear}.

\subsection{Step 4 - Obtaining the Estimate on the BSDE Solution Operator}
\label{s:BSDE_Lem}
Since the neural operator can efficiently approximate the solution operator $\Gamma^+$, we will show that our FBNO model can efficiently approximate the solution operator to our family of FBSDEs.
We note that the forward process $X_{\cdot}$ satisfies the (global) Lipschitz and \textit{uniform ellipticity} conditions.  Note that the assumptions on the forward process, required by~\cite[page 23]{pardoux1998backward} are trivially satisfied for~\eqref{eq:FBSDE_ForwardProcess} since it has zero drift and a constant positive definite-diffusion coefficient.

One typically assumes the following growth conditions of the set of \textit{generators} for the backward process.  Our next lemma will show that these assumptions all hold under Assumption~\ref{ass:semilinear-term} (iv) and for each $(f_0,g)\in \mathcal{X}_{s,\delta,K}$ for any $s,\delta>0$ and any compact $K$ containing $\mathcal{D}$.  
We first state these assumptions, before showing that they are satisfied.
\begin{assumption}[{Regularity: Backwards Process - \cite[page 23]{pardoux1998backward}}]
\label{ass:Backwards_BSDE__extendedexplanation}
Suppose that $\alpha:\mathbb{R}^{d+1}\to \mathbb{R}$ 
is continuous each $f_0\in \mathcal{X}_{s,\delta,K}$ are continuous and there exist constants $\mu<0<q,K,K^{\prime}$ and some $0>\lambda> 2\mu + K^2$ such that
the map $F:(x,y,z):\mathbb{R}^{d+1+1}\ni (x,y,z)\to \alpha(x,y)+f_0(x)\in \mathbb{R}$
satisfies: 
for each $x\in \mathbb{R}^d$ and $y,\tilde{y}\in \mathbb{R}$, $z,\tilde{z}\in \mathbb{R}$
\begin{itemize}
    \item[(i)] $|F(x,y,z)| \le K^{\prime} (1+|x|^q + |y| + |z|)$,
    \item[(ii)] $(y-\tilde{y})(
        F(x,y,z)-F(x,\tilde{y},z) 
    ) \le \mu (y-\tilde{y})^2$,
    \item[(iii)] $|
        F(x,y,z)
        -
        F(x,y,\tilde{z})
    |
    \le K \|z-\tilde{z}\|
    $,
    \item[(iv)] $\mathbb{E}\Big[\int_0^{\infty}\, e^{\lambda  \, t}\, 
        F(X_0^{x^{\prime}},0,0)^2
    dt\Big]<\infty$,
\end{itemize}
where, in (iv), $X_{\cdot}^{x^{\prime}}$ denoted~\eqref{eq:FBSDE_ForwardProcess} but with initial condition $x^{\prime}\in \mathbb{R}^d$.
\end{assumption}
\begin{lemma}
\label{lem:conditions_are_met_BackwardsProces}
Suppose that Assumption~\ref{eq:Growth_alpha} holds.
If $s,\delta>0$, if $K\subset \mathbb{R}^d$ is compact and contains $\mathcal{D}$, and suppose that Assumption~\ref{ass:semilinear-term} (ii), (iii), and (iv) hold then: for every $(f_0,g)\in \mathcal{X}_{s,\delta,K}$ (defined in~\eqref{eq:pertubations_terminal_conditions}) the backwards process~\eqref{eq:FBSDE} satisfies Assumption~\ref{ass:Backwards_BSDE__extendedexplanation}.
\end{lemma}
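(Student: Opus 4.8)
The plan is to verify, for a fixed $(f_0,g)\in\mathcal{X}_{s,\delta,K}$, each of the four conditions of Assumption~\ref{ass:Backwards_BSDE__extendedexplanation} for the driver $F(x,y,z)\eqdef\alpha(x,y)+f_0(x)$, the key structural observation being that $F$ does not depend on $z$. First I would dispatch the non-quantitative part: $\alpha$ is continuous (the sole non-estimate hypothesis of Assumption~\ref{ass:Backwards_BSDE__extendedexplanation}, which is part of our standing regularity on the reference generator, and which is compatible with the local polynomial structure and the zero/compact-support conditions of Assumption~\ref{ass:semilinear-term}(ii)--(iv)), and each $f_0\in\mathcal{X}_{s,\delta,K}$ is continuous since $f_0\in W^{s,\infty}_0(\mathcal D;\mathbb R)$ with $s>0$ embeds into $C(\overline{\mathcal D})$ and $f_0$ vanishes outside $\mathcal D$; extending by zero also yields $\|f_0\|_{L^\infty(\mathbb R^d)}\le\delta^2$, a bound I would use repeatedly.

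For condition~(i) I would add $|\alpha(x,y)|\le C_1(1+|x|^q+|y|)$ from Assumption~\ref{eq:Growth_alpha}(i) to $|f_0(x)|\le\delta^2$, obtaining $|F(x,y,z)|\le(C_1+\delta^2)(1+|x|^q+|y|+|z|)$, so one takes $K'\eqdef C_1+\delta^2$ and $q$ the exponent of Assumption~\ref{eq:Growth_alpha}(i). For condition~(ii), since the $f_0(x)$ terms cancel one has $F(x,y,z)-F(x,\tilde y,z)=\alpha(x,y)-\alpha(x,\tilde y)$, and Assumption~\ref{eq:Growth_alpha}(ii) gives $(y-\tilde y)\bigl(F(x,y,z)-F(x,\tilde y,z)\bigr)\le\tilde C_2|y-\tilde y|^2$, so we set $\mu\eqdef\tilde C_2<0$. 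Condition~(iii) is free: $F$ being $z$-independent yields $|F(x,y,z)-F(x,y,\tilde z)|=0\le K\|z-\tilde z\|$ for \emph{every} $K>0$, which is precisely the flexibility we exploit next.

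It then remains to choose the surviving constants compatibly with $0>\lambda>2\mu+K^2$. Since $\mu=\tilde C_2<0$, I would pick any $K\in(0,\sqrt{-2\tilde C_2})$, so that $2\mu+K^2<0$, and then any $\lambda\in(2\tilde C_2+K^2,0)$. Finally, for condition~(iv): as literally written, $F(X_0^{x'},0,0)=\alpha(x',0)+f_0(x')$ is a deterministic real number, so $\mathbb E\bigl[\int_0^\infty e^{\lambda t}F(X_0^{x'},0,0)^2\,dt\bigr]=F(x',0,0)^2/(-\lambda)<\infty$ since $\lambda<0$; under the more standard reading with the path $X^{x'}_t$, the same conclusion follows because $t\mapsto F(X_t^{x'},0,0)$ is bounded, using $\|f_0\|_\infty\le\delta^2$ together with the fact that $x\mapsto\alpha(x,0)$ is continuous and supported in the compact set $K$ by Assumption~\ref{ass:semilinear-term}(ii) and~(iv), hence bounded.

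I do not anticipate a genuine obstacle: the conditions of Assumption~\ref{ass:Backwards_BSDE__extendedexplanation} are tailored to the structure of $F$. The only mildly delicate points are the simultaneous selection of $\mu$, $K$, $\lambda$ satisfying the coupling $0>\lambda>2\mu+K^2$ --- which is painless exactly because $F$ is $z$-free, so $K$ may be taken arbitrarily small --- and the bookkeeping in~(iv), where Assumption~\ref{ass:semilinear-term}(ii)--(iv) are precisely what is needed to see that $\alpha(\cdot,0)$ is a bounded, compactly supported function.
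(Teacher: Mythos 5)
Your proof is correct and follows essentially the same route as the paper's: verify each clause of Assumption~\ref{ass:Backwards_BSDE__extendedexplanation} for $F(x,y,z)=\alpha(x,y)+f_0(x)$ by exploiting (a) the $z$-independence of $F$, (b) the compact support of $f_0$, and (c) Assumption~\ref{eq:Growth_alpha}. Two small points where you are actually more careful than the paper: you take $K>0$ small (the paper sets $K=0$, which strictly speaking conflicts with the quantifier $\mu<0<q,K,K'$ in the Assumption), and you explicitly flag and handle both readings of the $F(X_0^{x'},0,0)$ term in condition (iv), whereas the paper silently replaces it with $F(X_t^{x},0,0)$.
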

\begin{proof}
Fix $f_0\in \mathcal{X}_{s,\delta,K}$.
Assumption~\ref{eq:Growth_alpha} (ii) implies Assumption~\ref{ass:semilinear-term} (ii), with $\mu=\tilde{C}_2$, since $F(x,y,z)=\alpha(x,y)$.
Since $F$ is a constant function of the variable $z$ then, Assumption~\ref{ass:semilinear-term} (iii) holds with $K=0$.

Under Assumption~\ref{eq:Growth_alpha} (i), since each $f_0\in \mathcal{X}_{s,\delta,K}$ is supported in the compact set $K$ thus
\[
        \sup_{x\in \mathbb{R}}|f_0(x)|
    =
        \sup_{x\in K}|f_0(x)|
    \eqdef:
        \tilde{C}_{1}
    <
        \infty
.
\]
Consequentially, for each $x\in \mathbb{R}^d$ and every $y\in \mathbb{R}$ we have
\[
    |f_0(x)| + |\alpha(x,y)|
    \le 
    \tilde{C}_1 + 
    C_1 (1+|x|^p + |y|)
    \le 
        \tilde{C}_{1,1}(1+|x|^q + |y|),
\]
where $K^{\prime}\eqdef 2 \max\{C_1,\tilde{C}_1\}$.  Thus, Assumption~\ref{eq:Growth_alpha} (i) holds.

Since $\tilde{C}_2$ was negative and $K=0$, then let \[
0>\lambda\eqdef \tilde{C}_2 > \tilde{C}_2 + 0=
\tilde{C}_2 + K^2
.
\]
Next, observe that, since $f_0\in \mathcal{X}_{s,\delta,K}$ then
$$
    \sup_{x\in \mathbb{R}^d}\, |F(x,0,0)|
=
    \max_{x\in K}\, |\alpha(x,z)+f(z)|
\eqdef:
    C^{\star}
<
    \infty
.
$$
Consequentially, for all $x^{\prime}\in \mathbb{R}^d$ we have that
\[
        \mathbb{E}\big[
        \int_0^t\, e^{-\lambda\,t} (F(X_t^x,0,0)^2
        \big]
    \le 
        \int_0^t\, e^{-\lambda \, t} \, (C^{\star})^2 dt
    <
        \infty
.
\]
Thus, Assumption~\ref{ass:Backwards_BSDE__extendedexplanation} is satisfies by for each $f_0\in\mathcal{X}_{s,\delta,K}$.
\end{proof}
In what follows, for every $0<t\le \infty$, we will make use of the following convenient semi-norm ${\|\cdot\|_{\mathcal{S}^1_{t\wedge \tau}}}$ defined for each $u_{\cdot}\in {\mathcal{S}^{1}}$ by
\[
    \|u_{\cdot}\|_{{\mathcal{S}}^{\add{\infty}}_{t\wedge \tau}}
    \eqdef 
    \mathbb{E}\Big[
        \sup_{0\le s\le t\wedge \tau}\,
            |u_s|
    \Big]
    .
\]

\subsubsection{Technical Lemmata: Transferring PDE Results to BSDEs}
\label{s:Proof__ss:ApproximationResult___sss:TechnicalZLammta}
In order to proceed further, we will need a set of technical lemmata that allow us to transfer our neural-operator results for elliptic PDEs without gradient dependence to non-Markovian BSDEs. 

The Non-Markovian Feynman-Kac-Type representation (Proposition~\ref{prop:FK_TypeResult}).
Its proof traces the lines of~\citep[Proposition 4.1.2]{zhang2017backward}; with the expression for the linear BSDE (LBSDE) in~\citep[Proposition 4.1.1]{zhang2017backward} replaced for the solution in~\citep[Theorem 4.1 and 4.6]{pardoux1998backward}.
\begin{proof}[{Proof of Proposition~\ref{prop:FK_TypeResult}}]
The Novikov condition in Assumption~\ref{ass:StrongNovikov} implies that we may apply the Girsanov Theorem, as formulated in~\citep[Theorem 3.8]{jacod2013limit}, to deduce that $\Upsilon_{\cdot}$ is a $\mathbb{P}$-a.s.\ non-negative martingale and that the measure $\mathbb{Q}$ defined in~\eqref{eq:RN_density} is equivalent to $\mathbb{P}$ with density $\Upsilon_{\tau}$.  
Moreover, the process $W^{\mathbb{Q}}$ defined by $W^{\mathbb{Q}}_t\eqdef W_t + \int_0^t \beta_s\,ds$ is a $\mathbb{Q}$-Brownian motion on $[0,\tau]$.

Using~\eqref{eq:SDE_X__originalgeneral} and the identity $dW_s = dW_s^{\mathbb{Q}} - \beta_s ds$, we see that under $\mathbb{Q}$ the process $X_\cdot$ satisfies
\[
    dX_t = \mu(s,X_t)\,dt + \gamma(s,X_t)\,dW_t^{\mathbb{Q}},\qquad 0\le t\le\tau,
\]
so that the forward component of the FBSDE system coincides with~\eqref{eq:FBSDE}.

Under $\mathbb{Q}$, the (unperturbed) BSDE associated to the FBSDE system in~\eqref{eq:FBSDE} reads
\[
    \hat{Y}_t
    =
    g(X_{\tau})
    +
    \int_{t\wedge \tau}^{\tau}\,
        \Big(
            \alpha(X_s,\hat{Y}_s)
            +
            f_0(X_s)
        \Big)
    ds
    -
    \int_{t\wedge \tau}^{\tau}\,
        \hat{Z}_s\,dW^{\mathbb{Q}}_s.
\]
Under Assumption~\ref{eq:Growth_alpha} 
the conditions in~\cite[page 23]{pardoux1998backward} hold; thus, we may apply~\citep[Theorem 4.1 and 4.6]{pardoux1998backward}. Hence, for every $0\le t\le \tau$ we have
\begin{equation}
\label{eq:Pardoux_applied_fixed}
        \hat{Y}_t = u(X_t) 
    \mbox{ and }
        \hat{Z}_t = (\nabla u)(X_t)\,\gamma(t,X_t).
\end{equation}

Now define, for every $0\le t\le \tau$,
\begin{equation}
\label{eq:identities_swapswap_fixed}
    Y_t\eqdef \Upsilon_t^{-1}\hat{Y}_t = \Upsilon_t^{-1}u(X_t)
    ,\qquad
    Z_t\eqdef \Upsilon_t^{-1}\big(
            \hat{Z}_t - \hat{Y}_t \beta_t^{\top}
        \big),
\end{equation}
which is exactly~\eqref{eq:Y_COM}-\eqref{eq:Z_COM} in view of~\eqref{eq:Pardoux_applied_fixed}.  
Since $\Upsilon_{\cdot}$ is $\mathbb{P}$-a.s.\ strictly positive, these identities are well-defined.  
A direct application of It\^{o}'s formula to the process $Y_t = \Upsilon_t^{-1} u(X_t)$, using the SDE for $X_{\cdot}$, the dynamics of $\Upsilon_{\cdot}$ in~\eqref{eq:GirsanovMartingale}, and the identification of $(\hat{Y},\hat{Z})$ in~\eqref{eq:Pardoux_applied_fixed}, then shows that $(Y,Z)$ satisfies the BSDE~\eqref{eq:BSDE_perturbed__Y} on $[0,\tau]$ under $\mathbb{P}$.  
This yields the desired representation.
\end{proof}
The next lemma records sufficient conditions on the predictable process $\beta_{\cdot}$ guaranteeing that the Dol\'eans–Dade exponential $\Upsilon_{\cdot}$ appearing in~\eqref{eq:Gamma_COM} possesses reciprocals with the required degree of regularity.
\begin{lemma}[Finiteness of Powers of the Reciprocal Dol\'{e}ans-Dade Exponential]
\label{lem:Gamma_inverse_moment}
In the setting of Proposition~\ref{prop:FK_TypeResult},
let $\beta$ be an $\mathbb{R}^d$-valued predictable process.  
Fix $T>0$ and $1\le p<\infty$, and define
\[
A_T \eqdef \int_0^T \|\beta_s\|^2\,ds,
\qquad
\Upsilon_t \eqdef \exp\Bigl(
    -\int_0^t \beta_s  dW_s
    - \tfrac12 \int_0^t \|\beta_s\|^2 ds
\Bigr)
\]
for very $0\le t\le T$.  
If the ``strong Novikov condition'' $
\mathbb{E}\Bigl[
    \exp\Bigl(\tfrac12 (p^2 + p) A_T\Bigr)
\Bigr] < \infty
$ holds
then the reciprocal Dol\'{e}ans-Dade exponential satisfies 
$
\mathbb{E}\Bigl[\sup_{0\le t\le T} \Upsilon_t^{-p}\Bigr] < \infty
$.
\end{lemma}
\begin{proof}[{Proof of Lemma~\ref{lem:Gamma_inverse_moment}}]
For every $0\le t\le T$, define $
M_t \eqdef \int_0^t \beta_s\cdot dW_s$, 
$
\langle M\rangle_t = \int_0^t \|\beta_s\|^2 ds \eqdef A_t
$.
Then, for $0\le t\le T$ define $
\Upsilon_t^{-p}
=
\exp\Bigl(
    p M_t + \tfrac{p}{2} A_t
\Bigr)
$.
For any $0\le t\le T$, we have $A_t\le A_T$.  Whence
\[
    \sup_{0\le t\le T} \Upsilon_t^{-p}
\le
    \exp\Big(
        p \sup_{0\le t\le T} M_t + \tfrac{p}{2} A_T
    \Big)
.
\]
Hence
\begin{equation}
\label{eq:Gamma_sup_bound}
    \mathbb{E}\Bigl[\sup_{0\le t\le T} \Upsilon_t^{-p}\Bigr]
\le
    \mathbb{E}\Bigl[
        \exp\Bigl(p \sup_{0\le t\le T} M_t + \tfrac{p}{2} A_T\Bigr)
    \Bigr].
\end{equation}
By the Dambis-Dubins-Schwarz Theorem, cf.\ \citep[Theorem V.1.6; page 181]{YorContBM_1999__Book}, there exists a one-dimensional Brownian motion $(B_u)_{u\ge 0}$ satisfying $M_t = B_{A_t}$ for every $0\le t\le T$.  Consequently, $
\sup_{0\le t\le T} M_t
=
\sup_{0\le u\le A_T} B_u
$
.
Conditioning on $A_T$ in \eqref{eq:Gamma_sup_bound} yields
\begin{equation}
\label{eq:conditioning_bound}
\mathbb{E}\Bigl[\sup_{0\le t\le T} \Upsilon_t^{-p}\Bigr]
\le
\mathbb{E}\Bigl[
    e^{\frac{p}{2}A_T}
    \,\mathbb{E}\bigl[e^{
    p
    \,
        B_{A_T}^{\star}
    }\mid A_T\bigr]
\Bigr]
\end{equation}
where, for every $t\ge 0$, we use $B_t^{\star}\eqdef \sup_{0\le u\le t} B_u$ to denote the running maximum of the Brownian motion $B_{\cdot}$ at time $t$.
By~\citep[Theorem 2.21]{YuvalBrownianBoyBook_2010} the running maximum $B_{\cdot}^{\star}$ of the one dimensional Brownian motion $B_{\cdot}$ has the same martingale distribution as the process $(|B_t|)_{t\ge 0}$ at any time $t\ge 0$.  
Consequently, for any $t\ge 0$
\begin{equation}
\label{eq:conditioned_bound}
    \mathbb{E}\bigl[e^{p\,B_t^{\star}}\bigr]
    =
    \mathbb{E}\bigl[e^{p\,|B_t|}\bigr]
    =
    2\,\mathbb{E}\bigl[e^{p\,B_t}\mathbf{1}_{\{B_t\ge 0\}}\bigr]
    \le
    2\,\mathbb{E}\bigl[e^{p\,B_t}\bigr]
    =
    2\,e^{\frac12 p^2 t}.
\end{equation}
Incorporating the estimate in~\eqref{eq:conditioned_bound} into~\eqref{eq:conditioning_bound} yields
\[
    \mathbb{E}\Bigl[\sup_{0\le t\le T} \Upsilon_t^{-p}\Bigr]
\le
    \mathbb{E}\Bigl[
        e^{\frac{p}{2}A_T}
        \cdot 2 e^{\frac12 p^2 A_T}
    \Bigr]
=
    2\,\mathbb{E}\Bigl[
        \exp\Bigl(\tfrac12 (p^2 + p) A_T\Bigr)
    \Bigr]
<
    \infty
;
\]
thus completing our proof.
\end{proof}

The next lemma guarantees that any $C^1$ approximation $u$ also induces approximations of both $Y_{\cdot}$ and $Z_{\cdot}$ in $\mathcal{H}_T^2$, via the Feynman–Kac-type transformation in Proposition~\ref{prop:FK_TypeResult}.
\begin{lemma}[Approximation Up To Change of Measure]
\label{lem:perturbation_bound}
In the setting of Proposition~\ref{prop:FK_TypeResult},
fix $\varepsilon\ge 0$,
and let $u^{\varepsilon}\in C^1(\mathbb{R}^d,\mathbb{R})$ be such that $\|u-u^{\varepsilon}\|_{C^1(\mathcal{D})}\le \varepsilon$.  
Let
\begin{align}
\label{eq:Y_COM__esp}
    Y_t^{\varepsilon} & = \Upsilon_t^{-1}\,u^{\varepsilon}(X_t),
\\
\label{eq:Z_COM__sp}
    Z_t^{\varepsilon} & = \Upsilon_t^{-1}\,\big[
            (\nabla u^{\varepsilon})(X_t)\,\gamma(t,X_t) - u^{\varepsilon}(X_t)\beta_t^{\top}
        \big]
.
\end{align}
Suppose that $\beta\in \mathcal{H}_T^2$ 
satisfies the ``strong Novikov condition''
\begin{equation}
\label{eq:finienss_doeals_date}
        \mathbb{E}\biggl[
            \exp\big(
            \tfrac{1}{2}\big(
                p^2 + p
            \big)
                \int_0^T
                \,
                    \|\beta_t\|^2
                \,
                dt
            \big)
        \biggr]
    <
        \infty
.
\end{equation}
Then, there exists a constant $C>0$ depending only on $\beta_{\cdot},T$, and on $\gamma$ satisfying
\begin{equation}
\label{eq:Y_bound}
        \|Y_{\cdot}-Y_{\cdot}^{\varepsilon}\|_{\mathcal{H}_{\tau\wedge T}^2}
    \le 
        C\,
        \varepsilon
\mbox{ and }
        \|Z_{\cdot}-Z_{\cdot}^{\varepsilon}\|_{\mathcal{H}_{\tau\wedge T}^2}
    \le 
        C\,\varepsilon
.
\end{equation}
\end{lemma}
\begin{proof}[Proof of Lemma~\ref{lem:perturbation_bound}]
Abbreviate $\bar{\tau}\eqdef {\tau\wedge T}$.  
Then
\allowdisplaybreaks
\begin{align*}
    \|Y_{\cdot}-Y_{\cdot}^{\varepsilon}\|_{\mathcal{H}_{\tau\wedge T}^2}
& \eqdef
    \mathbb{E}\biggl[
        \sup_{0\le t \le \bar{\tau}}\,
            \big|
                \Upsilon_t^{-1}\,u(X_t)
                -
                \Upsilon_t^{-1}\,u^{\varepsilon}(X_t)
            \big|
    \biggr]
\\
& = 
    \mathbb{E}\biggl[
        \sup_{0\le t \le \bar{\tau}}
            \,
            \Upsilon_t^{-1}
            \,
            \big|
                u(X_t)
                -
                u^{\varepsilon}(X_t)
            \big|
    \biggr]
\\
& \le
    \mathbb{E}\biggl[
        \sup_{0\le t \le \bar{\tau}}
            \,
            \Upsilon_t^{-1}
        \,
        \sup_{0\le t \le \bar{\tau}}
            \,
            \big|
                u(X_t)
                -
                u^{\varepsilon}(X_t)
            \big|
    \biggr]
\\
\numberthis
\label{eq:def_tau_inDom}
& \le
    \mathbb{E}\biggl[
        \sup_{0\le t \le \bar{\tau}}
            \,
            \Upsilon_t^{-1}
        \,
        \,
        \sup_{x\in \mathcal{D}}
            \,
            \big|
                u(x)
                -
                u^{\varepsilon}(x)
            \big|
    \biggr]
\\
& \le
    \mathbb{E}\biggl[
        \sup_{0\le t \le \bar{\tau}}
            \,
            \Upsilon_t^{-1}
        \,
        \big\|
            u
            -
            u^{\varepsilon}
        \big\|_{C^1(\mathcal{D})}
    \biggr]
\\
& \le
    \varepsilon
    \mathbb{E}\biggl[
        \sup_{0\le t \le \bar{\tau}}
            \,
            \Upsilon_t^{-1}
    \biggr]
\\
& \le
    \varepsilon
    \mathbb{E}\biggl[
        \sup_{0\le t \le T}
            \,
            \Upsilon_t^{-1}
    \biggr]
\\
\numberthis
\label{eq:generalized_NovikovApplied}
& =
    \varepsilon
    \,
    C_{\beta,1}
<\infty
\end{align*}
where~\eqref{eq:def_tau_inDom} held since $X_t\in \mathcal{D}$ for all $t\in [0,\bar{\tau}]\subseteq [0,\tau]$ and~\eqref{eq:generalized_NovikovApplied} held by Lemma~\ref{lem:Gamma_inverse_moment} since we have assumed the strong Novikov condition in~\eqref{eq:finienss_doeals_date} (applied here with $p=1$); where $C_{\beta,1}
\eqdef 
\mathbb{E}\biggl[
        \sup_{0\le t \le T}
            \,
            \Upsilon_t^{-1}
    \biggr]
$.  Next, we argue similarly for the gap between $Z_{\cdot}$ and $Z_{\cdot}^{\varepsilon}$; observe that
\allowdisplaybreaks
\begin{align*}
    \|Z_{\cdot}-Z_{\cdot}^{\varepsilon}\|_{\mathcal{H}_{\tau\wedge T}^2}
\le & 
    \mathbb{E}\biggl[
        \sup_{0\le t \le \bar{\tau}}\,
            \big\|
                \Upsilon_t^{-1}
                \big(
                    \nabla u(X_t)\gamma(t,X_t)
                    -
                    u(X_t)\beta_t^{\top}
                \big)
            -
                \Upsilon_t^{-1}
                \big(
                    \nabla u^{\varepsilon}(X_t)\gamma(t,X_t)
                    -
                    u^{\varepsilon}(X_t)\beta_t^{\top}
                \big)
            \big\|
    \biggr]
\\
= & 
    \mathbb{E}\biggl[
        \sup_{0\le t \le \bar{\tau}}\,
            \Upsilon_t^{-1}
            \,
            \big\|
                \big(
                    \nabla u(X_t)
                    -
                    \nabla u^{\varepsilon}(X_t)
                \big)
                    \gamma(t,X_t)
            -
                \big(
                    u(X_t)-u^{\varepsilon}(X_t)
                \big)
                \,
                \beta_t^{\top}
            \big\|
    \biggr]
\\
\le & 
        \mathbb{E}\biggl[
            \sup_{0\le t \le \bar{\tau}}\,
                \Upsilon_t^{-1}
                \,
                \big\|
                    \big(
                        \nabla u(X_t)
                        -
                        \nabla u^{\varepsilon}(X_t)
                    \big)
                        \gamma(t,X_t)
                \big\|
        \biggr]
    +
        \mathbb{E}\biggl[
            \sup_{0\le t \le \bar{\tau}}\,
                \Upsilon_t^{-1}
                \,
                \big\|
                    \big(
                        u(X_t)-u^{\varepsilon}(X_t)
                    \big)
                    \,
                    \beta_t^{\top}
                \big\|
        \biggr]
\\
\le & 
        \mathbb{E}\biggl[
            \sup_{0\le t \le \bar{\tau}}\,
                \Upsilon_t^{-1}
                \,
                \big\|
                 \gamma(t,X_t)
                \big\|_{op}
                \big\|
                    \big(
                        \nabla u(X_t)
                        -
                        \nabla u^{\varepsilon}(X_t)
                    \big)
                \big\|
        \biggr]
\\
&\,\,
    +
        \mathbb{E}\biggl[
            \sup_{0\le t \le \bar{\tau}}\,
                \Upsilon_t^{-1}
                \,
                \big\|
                    \big(
                        u(X_t)-u^{\varepsilon}(X_t)
                    \big)
                    \,
                    \beta_t^{\top}
                \big\|
        \biggr]
\\
\numberthis
\label{eq:gamma_bound}
\le & 
        C_{T,\mathcal{D},\gamma}
        \,
        \mathbb{E}\biggl[
            \sup_{0\le t \le \bar{\tau}}\,
                \Upsilon_t^{-1}
                \,
                \big\|
                    \big(
                        \nabla u(X_t)
                        -
                        \nabla u^{\varepsilon}(X_t)
                    \big)
                \big\|
        \biggr]
\\
&\,\,
    +
        \mathbb{E}\biggl[
            \sup_{0\le t \le \bar{\tau}}\,
                \Upsilon_t^{-1}
                \,
                \big\|
                    \big(
                        u(X_t)-u^{\varepsilon}(X_t)
                    \big)
                    \,
                    \beta_t^{\top}
                \big\|
        \biggr]
\\
\le & 
        C_{T,\mathcal{D},\gamma}
        \,
        \mathbb{E}\biggl[
            \sup_{0\le t \le \bar{\tau}}\,
                \Upsilon_t^{-1}
                \,
                \big\| u-u^{\varepsilon} \big\|_{C^1(\mathcal{D})}
        \biggr]
    +
        \mathbb{E}\biggl[
            \sup_{0\le t \le \bar{\tau}}\,
                \Upsilon_t^{-1}
                \,
                \big\|
                    \big(
                        u(X_t)-u^{\varepsilon}(X_t)
                    \big)
                    \,
                    \beta_t^{\top}
                \big\|
        \biggr]
\\
\le & 
        C_{T,\mathcal{D},\gamma}
        \,
        \mathbb{E}\biggl[
            \sup_{0\le t \le \bar{\tau}}\,
                \Upsilon_t^{-1}
                \,
                \big\| u-u^{\varepsilon} \big\|_{C^1(\mathcal{D})}
        \biggr]
    +
        \mathbb{E}\biggl[
            \sup_{0\le t \le \bar{\tau}}\,
                \Upsilon_t^{-1}
                \,
                    |
                        u(X_t)-u^{\varepsilon}(X_t)
                    |
                \,
                \big\|
                    \beta_t^{\top}
                \big\|
        \biggr]
\\
\le & 
        C_{T,\mathcal{D},\gamma}
        \,
        \mathbb{E}\biggl[
            \sup_{0\le t \le \bar{\tau}}\,
                \Upsilon_t^{-1}
                \,
                \big\| u-u^{\varepsilon} \big\|_{C^1(\mathcal{D})}
        \biggr]
    +
        \mathbb{E}\biggl[
            \sup_{0\le t \le \bar{\tau}}\,
                \Upsilon_t^{-1}
                \,
                \big\| u-u^{\varepsilon} \big\|_{C^1(\mathcal{D})}
                \,
                \big\|
                    \beta_t^{\top}
                \big\|
        \biggr]
\\
\le & 
        \varepsilon\,
        C_{T,\mathcal{D},\gamma}
        \,
        \mathbb{E}\biggl[
            \sup_{0\le t \le \bar{\tau}}\,
                \Upsilon_t^{-1}
        \biggr]
    +
        \varepsilon\,
        \mathbb{E}\biggl[
            \sup_{0\le t \le \bar{\tau}}\,
                \Upsilon_t^{-1}
                \,
                \big\|
                    \beta_t^{\top}
                \big\|
        \biggr]
\\
\numberthis
\label{eq:CS}
\le & 
        \varepsilon\,
        C_{T,\mathcal{D},\gamma}
        \,
        \mathbb{E}\biggl[
            \sup_{0\le t \le \bar{\tau}}\,
                \Upsilon_t^{-1}
        \biggr]
    +
        \varepsilon
        \,
        \sqrt{
            \mathbb{E}\biggl[
                \sup_{0\le t \le \bar{\tau}}\,
                    \Upsilon_t^{-2}
            \biggr]
                    \,
            \mathbb{E}\biggl[
                \sup_{0\le t \le \bar{\tau}}\,
                    \big\|
                        \beta_t^{\top}
                    \big\|^2
            \biggr]
        }
\end{align*}
where in~\eqref{eq:gamma_bound} $\|\cdot\|_{op}$ denoted the operator norm of a $d\times d$ matrix, where we have defined 
$C_{T,\mathcal{D},\gamma}
\eqdef 
    \max_{0\le t\le T,\,x\in \mathcal{D}}\,
    \|\gamma(t,x)\|_{op}$ and which is finite since: $\gamma$ is continuous and thus must be bounded on compact set $[0,T]\times \mathcal{D}$, where we have again used the fact that $X_t\in \mathcal{D}$ for every $0\le t\le \bar{\tau}\le \tau$ and therefore
\begin{equation}
\label{eq:operator_norm__bound}
    C_{T,\mathcal{D},\gamma}
=
    \max_{0\le t\le T,\,x\in \mathcal{D}}\,
    \|\gamma(t,x)\|_{op}
\le 
    c
    \max_{0\le t\le T,\,x\in \mathcal{D}}\,
        |\gamma(t,x)|
    <\infty
\end{equation}
for some absolute constant $c>0$ giving the equivalence of the matrix norms $\|A\|_{op}\le 
\|A\|_{\infty,\infty}\eqdef \max_{i,j=1,\dots,D}|A_{i,j}|
$ for any $d\times d$ matrix $A$; moreover~\eqref{eq:CS} held by the Cauchy-Schwarz inequality.  

Now, Lemma~\ref{lem:Gamma_inverse_moment} (applied with $p=2$) implies that $\mathbb{E}\biggl[
                \sup_{0\le t \le \bar{\tau}}\,
                    \Upsilon_t^{-2}
            \biggr]$ is finite.  Moreover, since $\beta\in \mathcal{H}_T^2$ by assumption, we have $\mathbb{E}\biggl[
                \sup_{0\le t \le \bar{\tau}}\,
                    \big\|
                        \beta_t^{\top}
                    \big\|^2
            \biggr] < \infty$.  
Consequently, the constant $C_{T,\mathcal{D},\gamma,2}^2\eqdef 
            \mathbb{E}\biggl[
                \sup_{0\le t \le \bar{\tau}}\,
                    \Upsilon_t^{-2}
            \biggr]
                    \,
            \mathbb{E}\biggl[
                \sup_{0\le t \le \bar{\tau}}\,
                    \big\|
                        \beta_t^{\top}
                    \big\|^2
            \biggr]
$ is finite.  
Define $C\eqdef \max\{C_{\beta,1}\,C_{T,\mathcal{D},\gamma},C_{T,\mathcal{D},\gamma,2}\}$; whence \eqref{eq:CS} implies that 
$
\|Z_{\cdot}-Z_{\cdot}^{\varepsilon}\|_{\mathcal{H}_{\tau\wedge T}^2}
\le 
C\, \varepsilon
$.
\end{proof}
\subsubsection{Completion of the Proof of Main Theorem}
\paragraph{Step $1$ - Approximating the Solution Operator to the Associated PDEs}
Let $\mathcal{L}\eqdef \nabla \cdot (\gamma \nabla)$.  
For each source datum $(f_0,g)\in \mathcal{X}_{s,\delta,K}$, applying \citep[Theorems 4.1 and 4.6]{pardoux1998backward} implies that $(Y_{\cdot},Z_{\cdot})\eqdef G(f_0)$ admit the representation
\begin{equation}
\label{eq:Representation_as_PDE_Sol}
Y_t = u(X_t) \mbox{ and } Z_t = \nabla u(X_t),
\end{equation}
where $u$ is a solution to the following semilinear elliptic Dirichlet problem~
\begin{equation}
\label{eq:Target_PDE}
\begin{aligned}
\mathcal{L} u(x) + a(x,u(x)) & = f_0(x) \qquad \forall {x\in \mathcal{D}},
\\
u(x)&  = g(x) \qquad \forall x\in \partial \mathcal{D}
.
\end{aligned}
\end{equation}
We will approximate the solution operator $G:W^{s,p}(\mathcal{D};\mathbb{R})^{2}\mapsto W^{s,p}(\mathcal{D};\mathbb{R})$ on the subset $\mathcal{X}_{s,\delta,K}$ (for which~\cite{pardoux1998backward}'s PDE representation holds) mapping any $(f_0,g) \mapsto u$ to the solution $u$ of the  PDE~\eqref{eq:Target_PDE}.  Since we are under Assumption~\ref{ass:semilinear-term}, then Theorem~\ref{thrm:Main_EllipticPDESemilinear} applies.  Therefore, for each $\varepsilon,\delta>0$ there exists a neural operator $\Gamma$ in \hfill\\
$\mathcal{NO}^{L,W, \sigma}_{N,\varphi}(W^{s,p}(\mathcal{D};\mathbb{R})^{2}, W^{s,p}(\mathcal{D};\mathbb{R}))$ satisfying the uniform estimate
\begin{equation}
\label{eq:NO_Approx}
        \sup_{(f_0,g)\in \mathcal{X}}\,
            \|G(f_0,g) - \Gamma(f_0,g)\|_{W^{s,2}(\bar{\mathcal{D}})}
    \le 
        \epsilon,
\end{equation}
where $\mathcal{X}\eqdef 
B_{W^{s,\infty}_0(\mathcal{D};\mathbb{R})}(0, \delta^2) 
\times 
B_{H^{s + (d+1)/2}(\partial \mathcal{D}; \mathbb{R})}(0, \delta^2)$; moreover $\Gamma$ has depth, width, and rank as in Theorem~\ref{thrm:Main_EllipticPDESemilinear}.

Since, for each $G(f_0,g),\,\Gamma(f_0,g)\in W^{s,p}(\mathcal{D})$ where, by Assumption~\ref{ass:p-p-prime} and our requirements on $s$ we have that $s > 4 + \lceil \frac{d}{p} \rceil$ and $1<\frac{p}{p-1}<\frac{d}{d-1}$,
now since the boundary of $\mathcal{D}$ is $C^1$ and since $s > 4 + \lceil \frac{d}{p} \rceil$ then the Sobolev Embedding Theorem, see e.g.~\cite[Section 5.6.3]{EvansPDEBook_2010}, applies.  Whence, for each $(f_0,g) \in \mathcal{X}$ both outputs $G(f_0,g),\,\Gamma(f_0,g) \in C^{s-\lceil \frac{d}{p}\rceil-1,0}(\bar{\mathcal{D}})
\subseteq 
C^{3}(\bar{\mathcal{D}})
$.  Consequentially, for each $(f_0,g) \in \mathcal{X}$ the functions $G(f_0,g)$ and $\Gamma(f_0,g)$ have enough regularity to apply the It\^{o} Lemma. 
Moreover, for each $(f_0,g) \in \mathcal{X}$ the estimate~\eqref{eq:NO_Approx} and the Sobolev embedding theorem imply
\begin{equation}
\label{eq:NO_Approx__c1}
\begin{aligned}
        \|G(f_0,g) - \Gamma(f_0,g)\|_{C^{3}(\bar{\mathcal{D}})}
    & \le 
        \|G(f_0,g) - \Gamma(f_0,g)\|_{C^{s-\lceil \frac{d}{p}\rceil-1,0}(\bar{\mathcal{D}})}
    \\
    & \le 
        \|G(f_0,g) - \Gamma(f_0,g)\|_{W^{s,p}(\bar{\mathcal{D}})}
    \le 
        \varepsilon,
\end{aligned}
\end{equation}
where the right-hand side of~\eqref{eq:NO_Approx__c1} held by~\eqref{eq:NO_Approx} and we have used the fact that {$4 + \lceil \frac{d}{p} \rceil \ge 3$} in deducing the left-hand side of~\eqref{eq:NO_Approx__c1}.

\paragraph{Step $2$ - Controlling the Perturbed Dynamics}
For every datum $(f_0,g)\in \mathcal{X}$, let $u^{\varepsilon}\eqdef \Gamma(f_0,g)$, and consider the pair of $\mathbb{F}$-measurable processes 
$(Y^{\varepsilon}_{\cdot},X^{\varepsilon}_{\cdot})$ defined for each $t\ge 0$ by
\begin{equation}
\label{eq:defs_processes__YZ_epsilon}
        Y_t^{\varepsilon} 
        = 
            \add{\Upsilon_t^{-1}}
            u^{\varepsilon}(X_t) 
    \mbox{ and } 
        Z_t^{\varepsilon} 
        = 
            \add{\Upsilon_t^{-1}\big(}
            \nabla u^{\varepsilon}(X_t)
            \add{
                -u^{\varepsilon}(X_t) \beta_t^{\top}
            \big)}
.
\end{equation}
We show that $\|Y_{\cdot}-Y_{\cdot}^{\varepsilon}\|_{\mathcal{S}^{1}_{t\wedge \tau}}$ and $\|Z_{\cdot}-Z_{\cdot}^{\varepsilon}\|_{\mathcal{S}^{1}_{t\wedge \tau}}$ are controlled by a function of the approximation error $\varepsilon>0$.

\add{Since we are in the setting of Lemma~\ref{lem:perturbation_bound}, then there exists some constant $C>0$ depending only on $\beta_{\cdot},T$, and on $\gamma$ (thus independent of $g$, $f_0$, and of $\varepsilon$) such that
\begin{equation}
\label{eq:Y_bound}
        \|Y_{\cdot}-Y_{\cdot}^{\varepsilon}\|_{\mathcal{H}_{\tau\wedge T}^2}
    \le 
        C\,
        \varepsilon
\mbox{ and }
        \|Z_{\cdot}-Z_{\cdot}^{\varepsilon}\|_{\mathcal{H}_{\tau\wedge T}^2}
    \le 
        C\,\varepsilon
.
\end{equation}
}
\paragraph{Step 3 - Completing the Approximation Guarantee}
Recalling the definitions of 
\[
        \hat{\Gamma}(f_0,g)
    \add{
        =
            \big(
                Y_{\cdot}^{\varepsilon}
                ,
                Z_{\cdot}^{\varepsilon}
            \big)
        =
            \Big(
              \Upsilon_t^{-1}\,
                u^{\varepsilon}(X_t) 
        ,
            \Upsilon_t^{-1}\,
            \big(
            \nabla u^{\varepsilon}(X_t)
                -u^{\varepsilon}(X_t) \beta_t^{\top}
            \big)
            \Big)
    }
\]
and, since $\Gamma^{\star}(f_0,g)=(Y_{\cdot},Z_{\cdot})$, then this completes the argument upon noting that~
\eqref{eq:Y_bound}
implies that
\[
    \sup_{(f_0,g)\in \mathcal{X}_{s,\delta,K}}
    \,
        \big\|
            \Gamma^{\star}(f_0,g) 
            -
            \hat{\Gamma}(f_0,g)
        \big\|_{{\mathcal{S}}_{t\wedge \tau}^{\infty}\times {\mathcal{S}}_{t\wedge \tau}^{\infty}}
    \lesssim 
        \varepsilon,
\]
where we recall that: for each $u,v\in {\mathcal{S}}^1$ and each $t\ge 0$ we have defined $
\big\|
(u,v)
\big\|_{{\mathcal{S}}_{t\wedge \tau}^{\infty}\times {\mathcal{S}}_{t\wedge \tau}^{\infty}}
\eqdef 
\max\big\{
    \big\|
    u
    \big\|_{{\mathcal{S}}_{t\wedge \tau}^{\infty}}
,
    \big\|
    v
    \big\|_{{\mathcal{S}}_{t\wedge \tau}^{\infty}}
\big\}$.
This completes the proof of Theorem~\ref{thrm:Main_Stochastic}.

\subsection{Proof of Corollary~\ref{cor:Cs_Approx}}
{
The proof of Corollary is nearly identical to a specific step within the proof of Theorem~\ref{thrm:Main_Stochastic}.
\begin{proof}[{Proof of Corollary~\ref{cor:Cs_Approx}}]
Since, for each $\Gamma^{+}(f_0,g),\,\Gamma(f_0,g)\in W^{s,p}(\mathcal{D})$ where, by Assumption~\ref{ass:p-p-prime} and our requirements on $s$ we have that $s > 4 + \lceil \frac{d}{p} \rceil$ and $1<\frac{p}{p-1}<\frac{d}{d-1}$,
now since the boundary of $\mathcal{D}$ is $C^1$ and since $s > 4 + \lceil \frac{d}{p} \rceil$ then the Sobolev Embedding Theorem, see e.g.~\cite[Section 5.6.3]{EvansPDEBook_2010}, applies.  Whence, for each $(f_0,g) \in \mathcal{X}$ both outputs $\Gamma^{+}(f_0,g),\,\Gamma(f_0,g) \in C^{s-\lceil \frac{d}{p}\rceil-1,0}(\bar{\mathcal{D}})
\subseteq 
C^{3}(\bar{\mathcal{D}})
$.  Consequentially, for each $(f_0,g) \in \mathcal{X}$ the functions $\Gamma^{+}(f_0,g)$ and $\Gamma(f_0,g)$ have enough regularity to apply the It\^{o} Lemma. 
Moreover, for each $(f_0,g) \in \mathcal{X}$ the estimate~\eqref{eq:NO_Approx} and the Sobolev embedding theorem imply
\begin{equation}
\label{eq:NO_Approx__c1}
\begin{aligned}
        \|\Gamma^{+}(f_0,g) - \Gamma(f_0,g)\|_{C^{s-\lceil \frac{d}{p}\rceil-1,0}(\bar{\mathcal{D}})}
    \le 
        \|\Gamma^{+}(f_0,g) - \Gamma(f_0,g)\|_{W^{s,p}(\bar{\mathcal{D}})}
    \le 
        \varepsilon,
\end{aligned}
\end{equation}
where the right-hand side of~\eqref{eq:NO_Approx__c1} held by~\eqref{eq:NO_Approx} and we have used the fact that {$4 + \lceil \frac{d}{p} \rceil \ge 3$} in deducing the left-hand side of~\eqref{eq:NO_Approx__c1}.
\end{proof}
}

\appendix

\section{Some Closed-Form Expressions and Related Proofs}
\label{s:SomeClosedForms}
This appendix contains some convenient closed-form expressions used across our various examples.
\subsection{Closed-Form Expressions for Green's Functions}
\label{s:GreensFunctions}
\begin{proof}[{Proof of Proposition~\ref{prop:L_expansion}}]
First, we expand $\mathcal{L}$ and identify its drift and diffusion coefficients.
Fix $u\in C_c^\infty(\mathcal{D})$ and $x\in \mathcal{D}$.  Since $\gamma$ is constant,
\[
    \nabla(hu)
=
    h\,\nabla u + u\,\nabla h,
\qquad
    \gamma\nabla(hu)
=
    h\,\gamma\nabla u + u\,\gamma\nabla h.
\]
Therefore, we may expand $\operatorname{div}(\gamma\nabla(hu))$ as 
\begin{align*}
    \operatorname{div}(\gamma\nabla(hu))
&=
    \operatorname{div}\bigl(h\,\gamma\nabla u\bigr)
    +
    \operatorname{div}\bigl(u\,\gamma\nabla h\bigr)
\\
&=
    \nabla h\cdot(\gamma\nabla u)
    +
    h\,\operatorname{div}(\gamma\nabla u)
    +
    \nabla u\cdot(\gamma\nabla h)
    +
    u\,\operatorname{div}(\gamma\nabla h).
\end{align*}
Since $\gamma$ is symmetric then $
    \nabla h\cdot(\gamma\nabla u)
    =
    \nabla u\cdot(\gamma\nabla h)
$; therefore, $
    \operatorname{div}(\gamma\nabla(hu))
=
    2\,(\gamma\nabla h)\cdot\nabla u
    +
    h\,\operatorname{div}(\gamma\nabla u)
    +
    u\,\operatorname{div}(\gamma\nabla h)
$.
Now, dividing across by $h(x)$ and noting that $h^{-1}\nabla h=\nabla\log h$ yields
\begin{align}
\label{eq:generator_compute}
    (\mathcal{L}u)(x)
&=
    h(x)^{-1}\operatorname{div}(\gamma\nabla(hu))(x)
\\
\nonumber
&=
    \operatorname{div}(\gamma\nabla u)(x)
    +
    2\,\bigl(\gamma\nabla\log h(x)\bigr)\cdot\nabla u(x)
    +
    h(x)^{-1}u(x)\,\operatorname{div}(\gamma\nabla h)(x).
\end{align}
The generalized harmonic assumption on $h$ in~\eqref{eq:gen_harmonicity} that $\operatorname{div}(\gamma\nabla h)=0$; whence $\mathcal{L}$ simplifies to 
\begin{equation}
\label{eq:generator_compute_simplified}
    (\mathcal{L}u)(x)
=
    \operatorname{div}(\gamma\nabla u)(x)
    +
    2\,\bigl(\gamma\nabla\log h(x)\bigr)\cdot\nabla u(x)
=
    \operatorname{tr}\bigl(\gamma\nabla^2 u(x)\bigr)
    +
    2\,\bigl(\gamma\nabla\log h(x)\bigr)\cdot\nabla u(x).
\end{equation}
Thus $\mathcal{L}$ is a uniformly elliptic operator with no zeroth-order term, and in the standard It\^{o} form
\[
    (\mathcal{L}u)(x)
=
    \mu(x)\cdot\nabla u(x)
    + 
    \tfrac12\langle \sigma\sigma^{\top}
    ,
    \nabla^2 u(x)
    \rangle_F
\]
where $\langle\cdot,\cdot\rangle_F$ denotes the Fr\"{o}benius inner-product on the space of $d\times d$ matrices and where we have set
$\mu(x) \eqdef 2\,\gamma\nabla\log h(x)$ as well as $\sigma \eqdef \sqrt{2\gamma}$
(since $\tfrac12\sigma\sigma^{\top} = \tfrac12(2\gamma)=\gamma$).  Hence $\mathcal{L}$ is the generator of the diffusion process $X_{\cdot}$ solving the SDE in~\eqref{eq:Generator_of_DoobhTransformX}.

\noindent Next, we derive the Green's function.  Let $\mathcal{L}_0$ denote the constant-coefficient operator
\[
    \mathcal{L}_0 u \eqdef \operatorname{div}(\gamma\nabla u),
\]
and let $G_{\mathcal{L}}$ be the (Dirichlet) Green's function associated to $\mathcal{L}_0$ on $\mathcal{D}$, as in~\eqref{eq:GreensFunction}.  
As shown in~\cite[Lemma A.1]{cao2024expansion}, the Green's function $G_{\mathcal{L}}$ associated to the Dirichlet problem
\[
\begin{aligned}
\nabla \cdot \gamma \nabla G_{\mathcal{L}} (\cdot, y) 
& = -\delta(\cdot,  y)  & \mathrm{in} \ \mathcal{D},
\\
G_{\mathcal{L}} (\cdot, y) & = 0  & \mathrm{on} \ \partial \mathcal{D},
\end{aligned}
\]
admits the decomposition 
\[
    G_{\mathcal{L}}(x,y)
= 
        \Phi_{\mathcal{L}}(x-y)
+ 
        \Psi_{\mathcal{L}}(x,y),
\]
where $\Psi_{\mathcal{L}}\in C^{\infty}(\mathcal{D}^2)$ is symmetric and $
\Phi_{\mathcal{L}}
=
\frac{C_d}{
            \operatorname{det}(\gamma)^{-1/2}
            \|\gamma^{1/2}\,\cdot\|^{d-2}
        }
\in W^{1,p'}_{\operatorname{loc}}(\mathcal{D},\mathbb{R})
$ for any $1\le p'<d/(d-1)$; 
with $C_d =
\Gamma(1+d/2)\,(d(d-2)\pi^{d/2})^{-1}>0$.
Define the map $G_{h,\gamma}:\mathcal{D}^2\to \mathbb{R}$ for every $x,y\in \mathcal{D}$ by
\[
    G_{h,\gamma}(x,y)
    \eqdef
    \frac{h(y)}{h(x)}\,G_{\mathcal{L}}(x,y)
.
\]
Then, for each fixed $y\in\mathcal{D}$ we have $
    h(x)\,G_{h,\gamma}(x,y)
=
    h(y)\,G_{\mathcal{L}}(x,y)
$.
Since $\mathcal{L}u = h^{-1}\mathcal{L}_0(hu)$ by definition, we obtain, in the distributional sense,
\[
    \mathcal{L}_x G_{h,\gamma}(\cdot,y)
=
    h(x)^{-1}\,\mathcal{L}_{0,x}\bigl(h(\cdot)\,G_{h,\gamma}(\cdot,y)\bigr)
=
    h(x)^{-1}\,\mathcal{L}_{0,x}\bigl(h(y)\,G_{\mathcal{L}}(\cdot,y)\bigr)
=
    h(x)^{-1}h(y)\,\bigl(-\delta_y\bigr).
\]
Using the fact that $h(x)^{-1}\delta_y(x) = h(y)^{-1}\delta_y(x)$, this simplifies to $
    \mathcal{L}_x G_{h,\gamma}(\cdot,y)
=
    -\delta_y(\cdot),
$.  Whence,
$G_{h,\gamma}$ must be the (Dirichlet) Green's function associated to $\mathcal{L}$, since 
$G_{h,\gamma}(x,y)\to 0$ as $x\to\partial\mathcal{D}$ by the Dirichlet boundary condition on $G_{\mathcal{L}}$ and the smooth positivity of $h$ up to the boundary.
Upon substituting the decomposition $G_{\mathcal{L}}(x,y)=\Phi_{\mathcal{L}}(x-y)+\Psi_{\mathcal{L}}(x,y)$ and the explicit formula~\eqref{eq:GreensFunction__SingularPart} for $\Phi_{\mathcal{L}}$ into the definition of $G_{h,\gamma}$ yields~\eqref{eq:GreensFunction_h__Doob}; that is,
\[
        G_{h,\gamma}(x,y)
    =
        \frac{h(y)}{h(x)}\,\Psi_{\mathcal{L}}(x,y)
    +
        \frac{h(y)}{h(x)}\,
        \frac{C_d}{
            \operatorname{det}(\gamma)^{-1/2}
            \|\gamma^{1/2}(x-y)\|^{d-2}
        }.
\]

\noindent It only remains to verify the regularity of the ``regular'' and ``singular'' parts of $G_{h,\gamma}$.  Indeed, since $h\in C^2(\mathcal{D},(0,\infty))$ and since $\Psi_{\mathcal{L}}\in C^{\infty}(\mathcal{D}^2)$, then the product
\[
    \Psi_{h,\gamma}(x,y)\eqdef \frac{h(y)}{h(x)}\,\Psi_{\mathcal{L}}(x,y)
\]
is again smooth on $\mathcal{D}^2$, so $\Psi_{h,\gamma}\in C^{\infty}(\mathcal{D}^2)$.  For the singular part, fix $y\in\mathcal{D}$ and note that $x\mapsto\Phi_{\mathcal{L}}(x-y)$ belongs to $W^{1,p'}_{\operatorname{loc}}(\mathcal{D})$ for any $p'<d/(d-1)$, as remarked above.  Since $x\mapsto h(y)/h(x)$ is smooth and strictly positive on $\mathcal{D}$, the product
\[
    \Phi_{h,\gamma}(\cdot,y)\eqdef \frac{h(y)}{h(\cdot)}\,\Phi_{\mathcal{L}}(\cdot-y)
\]
also lies in $W^{1,p'}_{\operatorname{loc}}(\mathcal{D})$ for every such $p'$, which proves the stated regularity for $\Phi_{h,\gamma}$.

\noindent To see the final claim in the special cases where $\gamma = \lambda I_d$ for some $\lambda>0$ observe that: since $\operatorname{div}(\gamma\nabla h)=0$ then
$
    0
=
    \operatorname{div}(\gamma\nabla h(x))
=
    \operatorname{div}(\lambda\nabla h(x))
=
    \lambda\,\Delta h(x)
$ for every $x\in \mathcal{D}$.  Therefore, $\Delta h(x)=0$ for each $x\in \mathcal{D}$ which is the definition of $h$ being harmonic.
\end{proof}

\begin{lemma}[Verification of Weak Symmetry Assumption]
\label{lem:GreensFunctionSymmetry}
Let $\mathcal{D}\subset\mathbb{R}^d$ be bounded with $d\ge 3$.
Let $h\in C^2(\overline{\mathcal{D}},(0,\infty))$, $\gamma\in P_d^+$ constant,
$\Psi_{\mathcal{L}}\in C^{\infty}(\mathcal{D}^2)$, and define
\[
G_{h,\gamma}(x,y)
=
\frac{h(y)}{h(x)}\,\Psi_{\mathcal{L}}(x,y)
+
\frac{h(y)}{h(x)}\,
\frac{C_d}{
\det(\gamma)^{-1/2}\,\|\gamma^{1/2}(x-y)\|^{d-2}
},
\quad (x,y)\in\mathcal{D}^2.
\]
Fix $x\in\mathcal{D}$ and $1\le p<\frac{d}{d-1}$.
Then there exists $\tilde G(x,\cdot)\in W^{1,p}(\mathcal{D})$ such that
for all $u\in C_c^{\infty}(\mathcal{D})$,
\begin{equation}
\label{eq:desired_IP}
\int_{\mathcal{D}} \partial_{x_1}G_{h,\gamma}(x,y)\,u(y)\,dy
=
\int_{\mathcal{D}} \tilde G(x,y)\,\partial_{y_1}u(y)\,dy.
\end{equation}
\end{lemma}

\begin{proof}[{Proof of Lemma~\ref{lem:GreensFunctionSymmetry}}]
Write
\[
G_{h,\gamma}=\Psi_{h,\gamma}+\Phi_{h,\gamma},
\]
where
\[
\Psi_{h,\gamma}(x,y)
\eqdef
\frac{h(y)}{h(x)}\,\Psi_{\mathcal{L}}(x,y),
\qquad
\Phi_{h,\gamma}(x,y)
\eqdef
\frac{h(y)}{h(x)}\,
\frac{C_d}{
\det(\gamma)^{-1/2}\,\|\gamma^{1/2}(x-y)\|^{d-2}
}.
\]
Then $\Psi_{h,\gamma}\in C^{\infty}(\mathcal{D}^2)$, so
$\partial_{x_1}\Psi_{h,\gamma}(x,\cdot)\in L^p(\mathcal{D})$ for all
$1\le p\le\infty$.

Since $\gamma\in P_d^+$ is constant, there exist $0<c_1\le c_2<\infty$ such that
$c_1|z|\le \|\gamma^{1/2}z\|\le c_2|z|$ for all $z\in\mathbb{R}^d$.
Moreover $h(y)/h(x)$ is smooth and bounded above and below on $\mathcal{D}^2$.
Differentiating the singular term in $x_1$ therefore yields a homogeneous
singularity of order $1-d$, and there exist $r>0$ and $C>0$ such that, for all
$y\in B_r(x)\cap\mathcal{D}$,
\begin{equation}
\label{eq:sing_bound_strict__user}
|\partial_{x_1}\Phi_{h,\gamma}(x,y)|
\le C |x-y|^{1-d}.
\end{equation}
Hence, for $0<r<\operatorname{dist}(x,\partial\mathcal{D})$,
\begin{equation}
\label{eq:singularityverified}
\int_{B_r(x)\cap\mathcal{D}}
|\partial_{x_1}G_{h,\gamma}(x,y)|^p\,dy
\le
C^p\int_{B_r(x)} |x-y|^{p(1-d)}\,dy
= C^p \omega_d\int_0^r \rho^{d-1+p(1-d)}\,d\rho,
\end{equation}
and the right-hand side of~\eqref{eq:singularityverified} is finite since
$p<\tfrac{d}{d-1}$.
Thus $\partial_{x_1}G_{h,\gamma}(x,\cdot)\in L^p(B_r(x))$.
On $\mathcal{D}\setminus B_r(x)$ the kernel is smooth in $y$, so
$\partial_{x_1}G_{h,\gamma}(x,\cdot)$ is bounded there.
Consequently,
\[
\partial_{x_1}G_{h,\gamma}(x,\cdot)\in L^p(\mathcal{D})
\quad\text{for every }1\le p<\tfrac{d}{d-1}.
\]

Define $f:\mathbb{R}^d\to\mathbb{R}$ by
\[
f(y)\eqdef -\partial_{x_1}G_{h,\gamma}(x,y)\,\mathbf{1}_{\mathcal{D}}(y),
\]
so $f\in L^p(\mathbb{R}^d)$.
For $y=(y_1,\tilde{y})\in\mathbb{R}\times\mathbb{R}^{d-1}$ set
\[
F(y_1,\tilde{y})\eqdef \int_{-\infty}^{y_1} f(t,\tilde{y})\,dt.
\]
By Fubini's theorem $F$ is well-defined for a.e.\ $y$ and $F\in L^p(\mathbb{R}^d)$.
For any $\varphi\in C_c^{\infty}(\mathbb{R}^d)$,
\[
\int_{\mathbb{R}^d} F(y)\,\partial_{y_1}\varphi(y)\,dy
=
\int_{\mathbb{R}^{d-1}}
\int_{\mathbb{R}}
\Big(\int_{-\infty}^{y_1} f(t,\tilde{y})\,dt\Big)
\partial_{y_1}\varphi(y_1,\tilde{y})\,dy_1\,d\tilde{y}
\]
\[
=
-\int_{\mathbb{R}^{d-1}}
\int_{\mathbb{R}} f(y_1,\tilde{y})\,\varphi(y_1,\tilde{y})\,dy_1\,d\tilde{y}
=
-\int_{\mathbb{R}^d} f(y)\,\varphi(y)\,dy,
\]
where we used the fundamental theorem of calculus in $y_1$ and the compact
support of $\varphi$ in $y_1$.
Thus $\partial_{y_1}F=f$ in $\mathcal{D}'(\mathbb{R}^d)$ and, in particular,
$F\in W^{1,p}(\mathbb{R}^d)$.

Set $\tilde G(x,y)\eqdef F(y)$ for $y\in\mathcal{D}$.
Then $\tilde G(x,\cdot)\in W^{1,p}(\mathcal{D})$ and
\[
\partial_{y_1}\tilde G(x,\cdot)
=
f|_{\mathcal{D}}
=
-\partial_{x_1}G_{h,\gamma}(x,\cdot)
\quad\text{in }\mathcal{D}'(\mathcal{D}).
\]
Let $u\in C_c^{\infty}(\mathcal{D})$ and extend $u$ by $0$ to $\mathbb{R}^d$.
Since $f=-\partial_{x_1}G_{h,\gamma}(x,\cdot)$ on $\mathcal{D}$ and vanishes
outside $\mathcal{D}$, we obtain
\[
\int_{\mathcal{D}} \partial_{x_1}G_{h,\gamma}(x,y)\,u(y)\,dy
=
-\int_{\mathbb{R}^d} f(y)\,u(y)\,dy
=
\int_{\mathbb{R}^d} F(y)\,\partial_{y_1}u(y)\,dy
=
\int_{\mathcal{D}} \tilde G(x,y)\,\partial_{y_1}u(y)\,dy,
\]
which is exactly~\eqref{eq:desired_IP}.
\end{proof}

\subsection{Closed-Form Expression for Stochastic Adapter}
\label{s:ClosedForm__StochasticAdapted}
We illustrate several convenient closed-form expressions for the Dol\'{e}ans–Dade exponentials used in the definition of our stochastic adapter. Each such exponential admits a decomposition into a product of a ``Markovian part', a simple closed-form transformation of a Brownian motion, and a second factor given by a simple transformation of an integral functional of the path of the driving Brownian motion.
\begin{proposition}[Stochastic Integral-Free Dol\'{e}ans-Dade Exponential]
\label{prop:markov_form}
\hfill\\
Let 
$f:[0,\infty)\times\mathbb{R}^d\to \mathbb{R}^d$ and define the $\mathbb{F}$-predictable process $\beta_{\cdot}$ for every $t\ge 0$ by $\beta_t\eqdef f(t,W_t)$.
Fix a continuously differentiable $g:[0,\infty)\to \mathbb{R}$, a twice continuously differentiable $h:\mathbb{R}^d\to \mathbb{R}$, $\lambda\ge 0$ and $C\in \mathbb{R}$ with $C\neq 0$ and 
define $F(t,x)\eqdef g(t)h(x)$ for each $t\ge 0$ and every $x\in \mathbb{R}^d$.  If
\begin{equation}
\label{eq:harmonic_like_condition}
    \Delta h(x) = 2\lambda h(x)
,
\qquad 
    g(t) = C e^{-\lambda t},
\qquad 
    f(t,x) = g(t)\nabla h(x)
\end{equation}
for every $t\ge 0$ and each $x\in \mathbb{R}$; 
then, whenever it exists, the Dol\'{e}ans-Dade exponential $\Upsilon_{\cdot}$ of $\beta_{\cdot}$ (cf.~\eqref{eq:GirsanovMartingale}) is given by 
\begin{equation}
\label{eq:lem:markov_form__stoch_exp}
        \Upsilon_t
    =
        \exp\big(
            -F(t,W_t)+c
        \big)
        \,
        \exp\bigg(
            -\tfrac{C^2}{2}
            \,
            \int_0^t\,
                \,
                \tfrac{
                    \|\nabla h(W_s)\|^2
                }{e^{2\lambda s}}
            ds
        \bigg)
\end{equation}
where $c\eqdef 
Ch(0)$.
\end{proposition}
\begin{proof}[{Proof of Proposition~\ref{prop:markov_form}}]
It will be convenient to denote the components of the Brownian motion $(W_t)_{t\ge 0}$ at any time $t\ge 0$ by $(W_t^{(1)},\dots,W_t^{(d)})$
Clearly $F$ is once continuously differentiable in its first variable ``time'' and twice continuously differentiable in its remaining ``space'' variables; thus, we may apply the It\^{o} lemma to deduce that
\begin{align*}
F(t,W_t)-F(0,W_0)
=
\int_0^t \partial_t F(s,W_s)\,ds
+
\sum_{i=1}^d \int_0^t \partial_{x_i}F(s,W_s)\,dW_s^{(i)}
+
\frac12 \sum_{i=1}^d \int_0^t \partial_{x_ix_i}^2 F(s,W_s)\,ds
.
\end{align*}
Since $F(t,x)=g(t)h(x)$ for each $t\ge 0$ and every $x\in \mathbb{R}^d$, we compute
\[
\partial_t F(t,x)=g'(t)h(x),\qquad
\nabla_x F(t,x)=g(t)\nabla h(x),\qquad
\Delta_x F(t,x)=g(t)\Delta h(x).
\]
Therefore, our expression for $F(t,W_t)-F(0,W_0)$ reduces to
\begin{align}
\label{eq:Itoitopumpkinito__II}
F(t,W_t)-F(0,W_0)
&=
\int_0^t g(s)\nabla h(W_s)\cdot dW_s
+
\int_0^t \Bigl(\partial_t\,g\,(s)h(W_s)+\tfrac12 g(s)\Delta h(W_s)\Bigr)\,ds.
\end{align}
Upon applying the structural assumptions in~\eqref{eq:harmonic_like_condition}, the drift term in~\eqref{eq:Itoitopumpkinito__II} simplifies to
\[
\partial_t\,g\,(s)h(W_s)+\tfrac12 g(s)\Delta h(W_s)
=
g(s)\bigl(-\lambda h(W_s)+\lambda h(W_s)\bigr)
=
0.
\]
Consequently, we have established the Markovianity of our stochastic integral; i.e.\
\begin{align}
\label{eq:markovinmarkovwhoareyou}
F(t,W_t)-F(0,W_0)
=
\int_0^t g(s)\nabla h(W_s)\cdot dW_s
=
\int_0^t f(s,W_s)\cdot dW_s,
\end{align}
which concludes the proof of our first identity.
Next, observe that, for every $t\ge 0$,
\[
        \|f(t,W_t)\|^2 
    = 
        \|g(t)\nabla h(W_t)\|^2
    =
        g(t)^2
        \,
        \|\nabla h(W_t)\|^2
    =
        C^2 e^{-2\lambda t}
        \,
        \|\nabla h(W_t)\|^2,
\]
and hence
\begin{equation}
\label{eq:f_integrated}
        \int_0^t \|f(s,W_s)\|^2\,ds
    =
        C^2 \int_0^t e^{-2\lambda s}\,\|\nabla h(W_s)\|^2\,ds.
\end{equation}
By definition of the Dol\'{e}ans-Dade exponential $\Upsilon_{\cdot}$ of the process $\beta_t = f(t,W_t)$, provided that it exists, is defined by
\begin{equation}
\label{eq:def_DoleansDade}
        \Upsilon_t
    =
        \exp\biggl(
            -\sum_{i=1}^d \int_0^t (\beta_s)_i\,dW_s^{(i)}
            -
            \tfrac12 \int_0^t \|\beta_s\|^2\,ds
        \biggr)
.
\end{equation}
Incorporating~\eqref{eq:markovinmarkovwhoareyou} and~\eqref{eq:f_integrated} into~\eqref{eq:def_DoleansDade} yields 
our conclusion; namely, that~\eqref{eq:lem:markov_form__stoch_exp} holds.
\end{proof}

\section{Approximation of Smooth Function by Lifted Wavelet Expansion}
\label{a:DomainLiftingTrick}

Let $\Omega \subset \mathbb{R}^{d'}$ be a bounded domain. Let $1<p<\infty$ and $s, \sigma \in \mathbb{N}$. Let
\[
\left\{
\Phi^{j}_{r} : j \in \mathbb{N}_0, \ r = 1,...,N_j 
\right\},
\]
be an orthonormal wavelet frame in $L^2(\Omega)$. We can choose such a wavelet frame to be sufficiently smooth.
See \cite[Theorem 2.33]{triebel2008function} for its existence. From \cite[Theorem 2.55]{triebel2008function}, any $f \in W^{s,p}(\Omega)$ can be represented as denoting by $\nabla = \{ (j, r) : j \in \mathbb{N}, \ r = 1,..., N_j \}$,
\[
f = \sum_{(j,r) \in \nabla}{\lambda^j_r(f)} 2^{-jd'/2} {\Phi^{j}_{r}},
\]
in the sense of $W^{s,p}(\Omega)$, where 
\[
{\lambda^j_r(f)
\eqdef 2^{jd'/2}\sum_{|\alpha| \leq s}\langle f, \partial_x^{\alpha}\Phi^{j}_{r} \rangle.}
\]
We are now interested in the rate of 
$
\|f-f_{\Lambda_N}\|_{W^{s,p}(\Omega)} 
$
with respect to $N$ where 
\begin{equation}
\label{eq:wavelet-N-app}
f_{\Lambda_N} \eqdef \sum_{(j,r) \in \Lambda_N} {\lambda^j_{r}(f)} 2^{-jd'/2} \Phi^{j}_{r},
\end{equation}
under a suitable choice of $\Lambda_N \subset \nabla$ with $|\Lambda_N|\leq N$.
Let 
$$
S_N = \Biggl\{ \sum_{(j,r) \in \Lambda_N} {\lambda^j_r(f)} 2^{-jd/2} \Phi^{j}_{r} : |\Lambda_N| \leq N  \Biggr\}
,
$$ 
be the set of all $N$-term combinations. 

\begin{lemma}[{Approximating Smooth Functions in $W^{s,p}(\Omega)${: Jackson-Bernstein Estimates}}]
\label{lem:Sparse_bais_approximation_smooth_function_wkp-app}
Let $\Omega \subset \mathbb{R}^{d'}$ be bounded domain. 
Fix $1<p<\infty$, and $s, d' \in \mathbb{N}$ such that 
\begin{equation}
\label{eq:choice-s-d}
\frac{s}{d'} < \frac{1}{p} < 1 - \frac{\sigma}{d'}, \quad \sigma>s.
\end{equation}
Let $f\in W^{s+\sigma, \infty}(\Omega)$ and $\sigma \in \mathbb{N}$. 
For every $N\in \mathbb{N}_+$ there is some at-most $N$-term combination $f_{\Lambda_N}\in W^{s+\sigma,\infty}(\Omega)$ of the wavelets such that
\begin{align*}
    \|f-f_{\Lambda_N}\|_{W^{s,p}(\Omega)}
\lesssim
    N^{-\frac{s+\sigma}{2d'}}
        \|f\|_{W^{s+\sigma,\infty}(\Omega)}.
\end{align*}   
\end{lemma}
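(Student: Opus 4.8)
The plan is to deduce the estimate from the theory of nonlinear (best $N$-term) wavelet approximation in Sobolev spaces, i.e.\ from a Jackson--Bernstein argument in the spirit of \cite[Theorem 4.3.2]{Cohen2003_NumAnalWaveletMethodsBook}, after first recording $f$ in the wavelet representation of \cite[Theorem 2.55]{triebel2008function}. Concretely, set
\[
r \eqdef \frac{s+\sigma}{2}>0,
\qquad
\frac1\tau \eqdef \frac{r}{d'}+\frac1p \;=\; \frac{s+\sigma}{2d'}+\frac1p .
\]
The two hypotheses in \eqref{eq:choice-s-d} are exactly what makes this choice admissible: from $\tfrac1p<1-\tfrac{\sigma}{d'}$ and $s<\sigma$ one gets $\tfrac1\tau<1+\tfrac{s-\sigma}{2d'}<1$, hence $\tau>1$, while $\tfrac1\tau>\tfrac1p$ gives $\tau<p$; together with $\tfrac{s}{d'}<\tfrac1p$ this keeps all the spaces appearing below in the Banach range and compatible with the fixed, sufficiently smooth, wavelet frame, so that Triebel's representation and Cohen's Jackson estimate are both available for $W^{s,p}(\Omega)$ on the bounded domain $\Omega$.

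First I would reduce to the canonical setting: using a bounded Sobolev extension off $\Omega$ followed by multiplication by a fixed smooth compactly supported cutoff equal to $1$ on $\Omega$, I may assume $f$ is supported in a fixed cube, with $\|f\|_{W^{s+\sigma,\infty}}$ changed only by a constant; this affects only constants. By \cite[Theorem 2.55]{triebel2008function}, $f=\sum_{(j,r)\in\nabla}\lambda^j_r(f)\,2^{-jd'/2}\Phi^j_r$ in $W^{s,p}(\Omega)$, with the $W^{s,p}(\Omega)$-norm equivalent to the natural $\ell^p$-type norm of the Sobolev-adapted coefficients. The candidate approximant is the truncation \eqref{eq:wavelet-N-app} in which $\Lambda_N$ collects the indices of the $N$ largest terms $\|\lambda^j_r(f)\,2^{-jd'/2}\Phi^j_r\|_{W^{s,p}(\Omega)}$ (greedy selection). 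Since the $\Phi^j_r$ are sufficiently smooth and compactly supported, any such finite combination automatically lies in $W^{s+\sigma,\infty}(\Omega)$, which disposes of that requirement of the statement.

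The core of the argument is the embedding chain
\[
W^{s+\sigma,\infty}(\Omega)\hookrightarrow B^{s+\sigma}_{\infty,\infty}(\Omega)\hookrightarrow B^{s+r}_{\tau,\tau}(\Omega),
\]
whose second inclusion uses the strict gap $s+r=\tfrac{3s+\sigma}{2}<s+\sigma$ --- this is where the hypothesis $\sigma>s$ is spent: on a bounded domain a strict loss of smoothness absorbs the drop of the integrability index from $\infty$ to $\tau$ and of the fine index from $\infty$ to $\tau$, and one obtains $\|f\|_{B^{s+r}_{\tau,\tau}(\Omega)}\lesssim\|f\|_{W^{s+\sigma,\infty}(\Omega)}$. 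Because $\tfrac1\tau=\tfrac{r}{d'}+\tfrac1p$, the space $B^{s+r}_{\tau,\tau}(\Omega)$ sits exactly on the DeVore--Jawerth--Popov critical line for $N$-term wavelet approximation in $W^{s,p}(\Omega)$, so the Jackson half of \cite[Theorem 4.3.2]{Cohen2003_NumAnalWaveletMethodsBook} applied to the greedy truncation yields
\[
\|f-f_{\Lambda_N}\|_{W^{s,p}(\Omega)}\;\lesssim\; N^{-r/d'}\,\|f\|_{B^{s+r}_{\tau,\tau}(\Omega)}\;\lesssim\; N^{-\frac{s+\sigma}{2d'}}\,\|f\|_{W^{s+\sigma,\infty}(\Omega)},
\]
which is the claim.

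The step I expect to be the main obstacle is invoking the Jackson estimate rigorously at the \emph{critical} line $\tfrac1\tau=\tfrac{r}{d'}+\tfrac1p$ in the Sobolev-norm setting (rather than the plain $L^p$ setting most commonly quoted): one must check that Cohen's framework covers $W^{s,p}(\Omega)$ for these parameters, that the greedy selection is performed precisely with respect to the Sobolev-adapted functionals $\lambda^j_r$ of \cite[Theorem 2.55]{triebel2008function}, and that the extension/restriction between $\Omega$ and $\mathbb{R}^{d'}$ preserves the relevant coefficient-norm equivalences. A secondary technical point is the embedding $B^{s+\sigma}_{\infty,\infty}(\Omega)\hookrightarrow B^{s+r}_{\tau,\tau}(\Omega)$ on a merely bounded domain with the stated strict smoothness gap, which I would justify through the extension--restriction theory for Besov spaces on domains.
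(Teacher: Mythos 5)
Your proposal is sound in outline and takes a genuinely different route from the paper. You place $f$ directly on a DeVore--Jawerth--Popov critical line for $W^{s,p}(\Omega)$-approximation: choosing $r=\tfrac{s+\sigma}{2}$ and $\tfrac1\tau=\tfrac{r}{d'}+\tfrac1p$, you embed $W^{s+\sigma,\infty}(\Omega)\hookrightarrow B^{s+r}_{\tau,\tau}(\Omega)$ (a strict smoothness drop plus boundedness of $\Omega$ makes this unproblematic, and your index checks that $\tau>1$ are correct), and then apply the Jackson estimate once, in $W^{s,p}$-norm, to the greedy truncation. In contrast, the paper never invokes a Sobolev-norm Jackson inequality: it uses \cite[Theorem 4.3.3]{Cohen2003_NumAnalWaveletMethodsBook} to write the $W^{s,p}(\Omega)$-norm of the residual as an $L^{\tilde p_1}(\Omega)$-norm plus a weighted $\ell^p$-sequence of $L^{\tilde p_1}$-approximation distances (with $\tfrac1p=\tfrac1{\tilde p_1}+\tfrac{s}{d'}$), bounds each piece via the $L^{\tilde p_1}$-Jackson inequality of \cite[Theorem 4.3.2]{Cohen2003_NumAnalWaveletMethodsBook}, and produces the halved exponent $\tfrac{s+\sigma}{2d'}$ from the elementary inequality $N+N(j)\ge 2N^{1/2}N(j)^{1/2}$ together with the convergence of $\sum_j 2^{j(s-\sigma)p/2}$, which is where $\sigma>s$ is spent. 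Your argument is conceptually cleaner and absorbs the factor-of-two loss into the choice $r=\tfrac{s+\sigma}{2}$ rather than into a combinatorial splitting.

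The obstacle you flag is real and is exactly the one the paper's route is built to sidestep: \cite[Theorem 4.3.2]{Cohen2003_NumAnalWaveletMethodsBook} as cited gives the $L^p$ (not $W^{s,p}$) Jackson estimate, so ``$\sigma_N(f)_{W^{s,p}(\Omega)}\lesssim N^{-r/d'}\|f\|_{B^{s+r}_{\tau,\tau}(\Omega)}$'' needs an independent justification --- either a Sobolev-norm $N$-term theorem (available, e.g., in DeVore's nonlinear approximation survey, or via a lifting isomorphism identifying $W^{s,p}$ with $L^p$ and checking that greedy thresholding is compatible with the lift), or, alternatively, the very approximation-space characterization the paper invokes, at which point the two arguments converge. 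As written, your proposal delegates this to the literature rather than supplying it, so it is a correct plan with one load-bearing step left open; the paper's choice to work through \cite[Theorem 4.3.3]{Cohen2003_NumAnalWaveletMethodsBook} and sum over $j$ is the concrete mechanism that closes that step from the cited reference alone.
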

\begin{proof}
Let $N\in \mathbb{N}$. By the choice of \eqref{eq:choice-s-d}, we can choose $1< \tilde{p}_1, \tilde{p}_2, \tilde{p}_3 <\infty$ such that
\begin{equation}
\label{eq:choice-tilde-p}
\frac{1}{p}=\frac{1}{\tilde{p}_{1}}+\frac{s}{d'}, \quad 
\frac{1}{\tilde{p}_{2}}=\frac{s+\sigma}{d'}+\frac{1}{\tilde{p}_{1}}, \quad 
\frac{1}{\tilde{p}_{3}}=\frac{s+\sigma}{d'}+\frac{1}{\tilde{p}_{1}}.
\end{equation}

For every $\delta>0$ let $f_{\Lambda_{N}}^{\delta}$ be a $\delta$-optimal \textit{$N$-sparse approximation} of $f$; that is
\begin{equation}
\label{eq:NearOptimalSparseApproximator-app}
        \|f-f_{\Lambda_{N}}^\delta\|_{L^{\tilde{p}_1}(\Omega)}
    \le 
        \operatorname{dist}_{L^{\tilde{p}_1}(\Omega)}(f,S_N)
        +
        \delta
.
\end{equation}
where $\Lambda_N\subset \mathbb{N}_+$ consist of the (possibly not unique) largest $N$ coefficients of the wavelet expansion of $f$ and $f_{\Lambda_{N}}^\delta$ has the form
\[
f_{\Lambda_{N}}^{\delta} = \sum_{(j, r)\in \Lambda_N}\, {\lambda^j_r(f)} 2^{-jd'/2} \Phi^{j}_r.
\]
We set the residual function $R_N^{\delta}\eqdef f-f_{\Lambda_{N}}^{\delta}\in W^{s + \sigma, \infty}(\Omega)$.
Therefore,
we may apply~\cite[Theorem 4.3.3.]{Cohen2003_NumAnalWaveletMethodsBook} to obtain the tight norm bound
\begin{equation}
\label{eq:tight_norm_bound-app}
    \|R_N^{\delta}\|_{W^{s,p}(\Omega)}
\asymp
    \underbrace{
        \|R_N^{\delta}\|_{L^{\tilde{p}_1}(\Omega)}
    }_{\term{t:Lp_Bound-app}}
    +
    \underbrace{
        \big\|
            \big(
                2^{j s}
                \,
                \operatorname{dist}_{L^{\tilde{p}_1}(\Omega)}(
                R_N^{\delta}
                ,S_{N(j)})
            \big)_{j\in \mathbb{N}}
        \big\|_{\ell^{p} }
    }_{\term{t:lq_sprasebound-app}}
,
\end{equation}
where $N(j)=2^{d'j}$ and where $\tilde{p}_1>1$ satisfies $\frac{1}{p}=\frac{1}{\tilde{p}_{1}}+\frac{s}{d'}$ from (\ref{eq:choice-tilde-p}).
Our objective will be to bound terms~\eqref{t:Lp_Bound-app} and~\eqref{t:lq_sprasebound-app}.

\hfill\\
We first bound term~\eqref{t:Lp_Bound-app}.  Using $f \in W^{s+\sigma, \infty}(\Omega)$ the Jackson-type inequality in~\citep[Theorem 4.3.2]{Cohen2003_NumAnalWaveletMethodsBook} we find that with \eqref{eq:NearOptimalSparseApproximator-app}
\allowdisplaybreaks
\begin{align*}
\label{eq:completing_bound_on_t:lq_sprasebound__pt1-app}
        \eqref{t:Lp_Bound-app} 
    &\le
        \operatorname{dist}_{L^{p}(\Omega)}(f,S_N)
        +
        \delta
        \nonumber
\\
    &\le
        N^{-(s+\sigma)/d'}
        \|f\|_{W^{s+\sigma,\tilde{p}_{2}}(\Omega)}
        +
        \delta
        \nonumber
\\
\numberthis
    &\lesssim
        N^{-(s+\sigma)/d'}\|f\|_{W^{s+\sigma,\infty}(\Omega)}
        +
        \delta,
\end{align*}
where $\tilde{p}_2>1$ satisfies $\frac1{\tilde{p}_{2}}=\frac{s+\sigma}{d'}+\frac{1}{\tilde{p}_{1}}$ from (\ref{eq:choice-tilde-p}).
\hfill\\
\noindent Next we bound term~\eqref{t:lq_sprasebound-app}.
By~\cite[Theorem 4.3.1]{Cohen2003_NumAnalWaveletMethodsBook}, we have that for any $j \in \mathbb{N}$
\begin{align}
\label{eq:greedy_basis_LHS-app}
        \operatorname{dist}_{L^{\tilde{p}_1}(\Omega)}
        (f-f_{\Lambda_{N}}^{\delta},S_{N(j)})
    &
    \lesssim
        \Big\|
            (f - f_{\Lambda_{N}}^{\delta})
            -
            \sum_{(j',r)\in \Lambda_{N(j)}}\,
            \lambda^{j'}_r(f)^s 2^{-j'd'/2} \Phi^{j'}_r
        \Big\|_{L^{\tilde{p}_1}(\Omega)}
        \nonumber
        \\
        &
    =
        \Big\|
            f 
            -
            \sum_{(j',r)\in \Lambda_{N+N(j)}}\,
            \lambda^{j'}_r(f)^s 2^{-j'd'/2} \Phi^{j'}_r
        \Big\|_{L^{\tilde{p}_1}(\Omega)},
\end{align}
By~\cite[Example 6 (A) - page 150]{GreedyBasises_Presmek__2003}, wavelet $\{\Phi^{j}_r\}$
is a greedy basis for $L^{\tilde{p}_1}(\Omega)$; meaning that the best $M$-term approximation (for any $M\in \mathbb{N}_+$) of any element of $L^{\tilde{p}_1}(\Omega)$ is, up to an absolute constant, given by selecting the bases vectors with the largest $M$ coefficients.  
Whence~\eqref{eq:greedy_basis_LHS-app} can be symmetrically bounded on the right via
\begin{equation}
\label{eq:best_tildeN_term}
    \operatorname{dist}_{L^{\tilde{p}_1}(\Omega)}(f-f_{\Lambda_{N}}^{\delta},S_{N(j)})
\lesssim
    \operatorname{dist}_{L^{\tilde{p}_1}(\Omega)}(f,S_{N+N(j)})
.
\end{equation}

Consequentially,
we may use the representation
we again appeal to the Jackson-type inequality in~\cite[Theorem 4.3.2.]{Cohen2003_NumAnalWaveletMethodsBook}, we have that as $N + N(j) \geq 2 N^{1/2}N(j)^{1/2} \geq  N^{1/2}2^{\frac{d'j}{2}}$
and $\sigma> s$,

\allowdisplaybreaks
\begin{align*}
\label{eq:t:lq_sprasebound__smallNBound-app}
        2^{js}\, \operatorname{dist}_{L^{\tilde{p}_1}(\Omega)}(R_N^{\delta},S_{N(j)})
    & =
        2^{js}\, \operatorname{dist}_{L^{\tilde{p}_1}(\Omega)}(f-f_{\Lambda_{N}}^{\delta},S_{\tilde{N}})
\\
    &
    \lesssim
        2^{js}\operatorname{dist}_{L^{\tilde{p}_1}(\Omega)}
        (f,S_{N+\tilde{N}})
\\
    & \le 
        2^{js}\, 
        (N+N(j))^{-(\sigma+s)/d'}
        \,
        \|f\|_{W^{s+\sigma,\tilde{p}_3}(\Omega)}
\\
    & \le 
        N^{-\frac{\sigma+s}{2d'}}
        2^{js}\, 
        (2^{\frac{d'j}{2}})^{-(\sigma+s)/d'}
        \,
        \|f\|_{W^{s+\sigma,\infty}(\Omega)}
\\
    & \le N^{-\frac{\sigma+s}{2d'}}
        \,
        2^{j(s-\sigma)/2}
        \,
        \|f\|_{W^{s+\sigma,\infty}(\Omega)}
\numberthis
.
\end{align*}
where $\tilde{p}_3>1$ satisfies $\frac{1}{\tilde{p}_3}=\frac{1}{\tilde{p}_1}+ \frac{s+\sigma}{d'}$ from (\ref{eq:choice-s-d}).
Summing over all $j\in \mathbb{N}$ in the estimates~\eqref{eq:t:lq_sprasebound__smallNBound-app} implies that we may control term~\eqref{t:lq_sprasebound-app} by
\allowdisplaybreaks
\begin{align*}
        \eqref{t:lq_sprasebound-app}
    & \lesssim
        \biggl(
            \sum_{j=0}^{\infty}
            \,
                \big(
                2^{js}\, \operatorname{dist}_{L^{\tilde{p}_1}(\Omega)}(R_N^{\delta},S_{N(j)})
                \big)^{p}
        \biggr)^{1/p}
\\
    & \lesssim
    N^{-\frac{\sigma+s}{2d'}}
        \biggl(
            \sum_{j=0}^{\infty}
            \, 
                2^{(s-\sigma)p/2}
                \, 
    \biggr)^{1/p}
    \|f\|_{W^{s+\sigma,\infty}(\Omega)}
\\
\numberthis
\label{eq:make_this_converge_with_smoothness-app}
    & \lesssim 
        N^{-\frac{\sigma+s}{2d'}}
        \, \|f\|_{W^{s+\sigma,\infty}(\Omega)}.
\end{align*}
Combining~\eqref{eq:completing_bound_on_t:lq_sprasebound__pt1-app} and~\eqref{eq:make_this_converge_with_smoothness-app} implies that 
\allowdisplaybreaks 
\begin{align*} 
\numberthis 
\label{eq:pre_final_widthbound}
    \|R_{N}^{\delta}\|_{W^{s,p}(\Omega)}
& \lesssim 
        N^{-\frac{s+\sigma}{2d'}}
        \|f\|_{W^{s+\sigma,\infty}(\Omega)}
.
\end{align*}
where we have retroactively set $\delta =N^{-\frac{s+\sigma}{2d'}}
        \|f\|_{W^{s+\sigma,\infty}(\Omega)}$, which  yields the conclusion.
\end{proof}

\noindent {The next lemmata concern the theoretical benefits of our domain lifting channels.}

\begin{lemma}[Stability Under Affine Composition]
\label{lem:simple_composition_lemma}
Let $1\le p< \infty$, $d_1,d_2\in \mathbb{N}_+$, for $i=1,2$ $\Omega_i\subset \mathbb{R}^{d_i}$ be bounded open domains and let $g:\mathbb{R}^{d_1}\ni x \to Ax+b\in \mathbb{R}^{d_2}$ be an affine map satisfying
\begin{equation}
\label{eq:control_ck_condition__endomorphism}
g(\Omega_2)\subseteq \Omega_1.
\end{equation}
Assume that 
\[
k' = k + \left[\frac{d}{p} \right]+1,
\]
where $k, k' \in \mathbb{N}_+$. Then for every $f\in W^{k',p}(\Omega_2)$ we have $f\circ g\in W^{k,p}(\Omega_1)$, and 
\[
        \|f\circ g\|_{W^{k,p}(\Omega_1)}
\lesssim
        {\|A\|_{op}^{k}}
        \|f\|_{W^{k',p}(\Omega_2)}
.
\]
\end{lemma}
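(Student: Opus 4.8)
\textbf{Proof plan for Lemma~\ref{lem:simple_composition_lemma}.}
The plan is to prove this by a straightforward change-of-variables argument combined with the chain rule, but the point where care is needed is that $A$ need not be invertible, so one cannot simply pull back via a diffeomorphism. First I would reduce to the case $b=0$ by noting that composition with a translation preserves $W^{k,p}$ norms on domains (up to the inclusion hypothesis~\eqref{eq:control_ck_condition__endomorphism}), so the translation contributes nothing to the operator-norm factor. Then, writing $g(x)=Ax$, the Faà di Bruno / iterated chain rule gives that for any multi-index $\beta$ with $|\beta|\le k$, the distributional derivative $\partial^\beta (f\circ g)$ is a finite sum of terms of the form $c_{\beta,\alpha}\,(\partial^\alpha f)(Ax)$ with $|\alpha|=|\beta|$, where each coefficient $c_{\beta,\alpha}$ is a product of $|\beta|$ entries of $A$; hence $|c_{\beta,\alpha}|\lesssim \|A\|_{op}^{|\beta|}\le \|A\|_{op}^{k}$ (using $\|A\|_{op}\ge 1$ may be assumed WLOG, or otherwise one keeps $\|A\|_{op}^{k}$ as the worst case over $|\beta|\le k$). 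This reduces the problem to bounding $\|(\partial^\alpha f)\circ g\|_{L^p(\Omega_1)}$ for $|\alpha|\le k$ in terms of $\|\partial^\alpha f\|_{L^{?}(\Omega_2)}$.

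The main obstacle, and the reason the hypothesis $k'=k+\lceil d/p\rceil+1$ appears, is precisely controlling $\|h\circ A\|_{L^p(\Omega_1)}$ when $A$ is rank-deficient: the pushforward of Lebesgue measure on $\Omega_1$ under $A$ is supported on a lower-dimensional affine subspace and is not absolutely continuous with respect to Lebesgue measure on $\mathbb{R}^{d_2}$, so an $L^p(\Omega_2)$ bound on $h$ is not enough — one needs pointwise (or trace-type) control of $h$. This is where the extra $\lceil d/p\rceil+1$ derivatives are spent: by the Sobolev embedding theorem (see~\cite[Section 5.6.3]{EvansPDEBook_2010}), $W^{\lceil d/p\rceil+1,p}(\Omega_2)\hookrightarrow C^{0}(\overline{\Omega_2})$ (more precisely into a Hölder space), so applying this to $h=\partial^\alpha f$ with $|\alpha|\le k$ gives $\partial^\alpha f\in C^0(\overline{\Omega_2})$ with $\|\partial^\alpha f\|_{C^0(\overline{\Omega_2})}\lesssim \|\partial^\alpha f\|_{W^{\lceil d/p\rceil+1,p}(\Omega_2)}\le \|f\|_{W^{k',p}(\Omega_2)}$. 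Here $d$ should be read as $d_2$ (the dimension of $\Omega_2$, where $f$ lives); I would flag that the statement's ``$d$'' is $d_2$.

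With the uniform bound in hand, I would conclude: since $g(\Omega_1)\subseteq \Omega_2$ by~\eqref{eq:control_ck_condition__endomorphism}, for each $|\alpha|\le k$,
\[
\|(\partial^\alpha f)\circ g\|_{L^p(\Omega_1)}
\le |\Omega_1|^{1/p}\,\|\partial^\alpha f\|_{C^0(\overline{\Omega_2})}
\lesssim \|f\|_{W^{k',p}(\Omega_2)},
\]
where the implied constant absorbs $|\Omega_1|^{1/p}$ and the Sobolev embedding constant, both depending only on $\Omega_1,\Omega_2,d_1,d_2,k,p$. Summing over all $|\beta|\le k$ the chain-rule expansion of $\partial^\beta(f\circ g)$ and inserting the coefficient bound $|c_{\beta,\alpha}|\lesssim\|A\|_{op}^{k}$ yields
\[
\|f\circ g\|_{W^{k,p}(\Omega_1)}
\lesssim \|A\|_{op}^{k}\sum_{|\alpha|\le k}\|(\partial^\alpha f)\circ g\|_{L^p(\Omega_1)}
\lesssim \|A\|_{op}^{k}\,\|f\|_{W^{k',p}(\Omega_2)},
\]
which is the claimed estimate; in particular $f\circ g\in W^{k,p}(\Omega_1)$. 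A minor technical point I would address is justifying the chain rule at the level of weak derivatives for $f$ of finite Sobolev regularity: this is handled by a standard mollification argument, approximating $f$ by smooth $f_\varepsilon$ in $W^{k',p}(\Omega_2)$, applying the classical chain rule to $f_\varepsilon\circ g$, and passing to the limit using the uniform bounds just established.
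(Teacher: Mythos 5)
Your proof is correct and follows essentially the same route as the paper's: the chain rule for affine maps extracts the $\|A\|_{op}^{k}$ factor, the Sobolev embedding $W^{k',p}(\Omega_2)\hookrightarrow C^{k}(\overline{\Omega_2})$ (you apply it order-by-order to $\partial^\alpha f$, the paper applies it once to $f$, but it is the same estimate) supplies the required uniform control of derivatives up to order $k$, and the finite volume of $\Omega_1$ converts the resulting sup bound into the $L^p(\Omega_1)$ bound. You also correctly diagnosed two small slips in the source that your reading repairs: the inclusion actually used in the argument is $g(\Omega_1)\subseteq\Omega_2$ (not $g(\Omega_2)\subseteq\Omega_1$ as printed), and the step $\|A\|_{op}^{|\beta|}\le\|A\|_{op}^{k}$ tacitly assumes $\|A\|_{op}\ge 1$.
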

\begin{proof}
By \cite[Theorem 6]{EvansPDEBook_2010}, 
$$
f \in  C^{k, \gamma}(\bar{\Omega}_2),
$$
and 
\begin{equation}
\label{eq:sob-ineq-affine}
\|f\|_{C^{k, \gamma}(\bar{\Omega}_2)} \lesssim \|f\|_{W^{k', p}(\Omega_2)},
\end{equation}
where some $\gamma \in (0,1)$.
For every multi-index $\beta$ with $|\beta|\le k$, we have, since $\Omega$ is bounded 
\allowdisplaybreaks
\begin{align*}
\numberthis
\label{eq:eq:derivative_bound_setup}
        \|D^{\beta} (f\circ g)\|_{L^p(\Omega_1)} 
    & \le 
        \|D^{\beta} (f\circ g)\|_{L^{\infty}(\Omega_1)}
        \,
        \biggl(
            \int_{x\in \Omega_1}\,1\, dx
        \biggr)^{1/p}
\\
\numberthis
\label{eq:continuity}
    & =
        \max_{x\in \Omega_1}\, |D^{\beta} (f\circ g)(x)|
        \,
        \biggl(
            \int_{x\in \Omega_1}\,1\, dx
        \biggr)^{1/p}
\\
    & =
        \max_{x\in \Omega_1}\, |D^{\beta} (f\circ g)(x)|
        \,
        \operatorname{Vol}(\Omega_1)^{1/p}
\\
\numberthis
\label{eq:chain_rule}
    & =
        \max_{x\in \Omega_1}\, 
        \|A\|_{op}^{|\beta|}
        \,
        |(D^{\beta} f)(Ax+b)|
        \,
        \operatorname{Vol}(\Omega_1)^{1/p}
\\
    & =
        \Big(
            \|A\|_{op}^{|\beta|}
            \operatorname{Vol}(\Omega_1)^{1/p}
        \Big)
        \,
            \max_{x\in \Omega_1}
            \, 
            |(D^{\beta} f)(Ax+b)|,
\end{align*}
where $\operatorname{Vol}(\Omega_1)$ is the volume of $\Omega_1$ with respect to the Lebesgue measure on $\mathbb{R}^{d_1}$ and~\eqref{eq:continuity} holds by continuity of $f$ and of $g$ (since $f\in C^{\infty}(\Omega_2)$ and $g$ is affine), and~\eqref{eq:chain_rule} holds by the chain-rule (due to the smoothness of both $f$ and of $g$) as well as the assumption that $g=A\cdot +b$.  Since $g$ maps $\Omega_2$ into $\Omega_1$ (i.e. it satisfies~\eqref{eq:control_ck_condition__endomorphism}) then
\allowdisplaybreaks
\begin{align*}
\numberthis
\label{eq:derivative_bound_completion}
            \max_{x\in \Omega_1}
            \, 
            |(D^{\beta} f)(Ax+b)|
    & 
    \le
            {\|A\|_{op}^{k}}
            \max_{z\in \Omega_2}
            \, 
            |(D^{\beta} f)(z)|
\end{align*}
{since $|\beta|\le k$}.
Together, the inequalities~\eqref{eq:sob-ineq-affine}, ~\eqref{eq:eq:derivative_bound_setup}, and~\eqref{eq:derivative_bound_completion} imply that
\begin{equation}
\label{eq:control_wsp_Omega1}
\begin{aligned}
        \|f\circ g\|_{W^{k,p}(\Omega_1)}
    & \lesssim
        {\|A\|_{op}^{k}}
        \|f\|_{C^{k}(\bar{\Omega}_2)}
    \leq
        {\|A\|_{op}^{k}}
        \|f\|_{C^{k, \gamma}(\bar{\Omega}_2)}
        \lesssim 
        {\|A\|_{op}^{k}}
        \|f\|_{W^{k', p}(\Omega_2)}
.
\end{aligned}
\end{equation}
This completes our proof.
\end{proof}

\begin{lemma}[{Domain-}Lifted Wavelet Approximation]
\label{lem:wavelet-sob-rate-app}
Let $\mathcal{D} \subset \mathbb{R}^{d}$ be bounded domain. 
Fix $1<p<\infty$, and let $s, \sigma, d, k \in \mathbb{N}$ such that 
\begin{equation}
\label{eq:choice-s-d-k}
    \frac{s+\big[\frac{d}{p}\big] + 1}{kd} < \frac{1}{p} < 1 - \frac{\sigma}{kd}
\mbox{ and }
    s+\biggl[\frac{d}{p}\biggr] + 1 < \sigma
.
\end{equation}
Let $f \in W^{s+\lceil \frac{d}{p} \rceil+1+\sigma, \infty}(\mathcal{D})$, $\Pi:\mathcal{D}^k\to \mathcal{D}$ be canonical projection onto the first coordinate.
\hfill\\
For every $N\in \mathbb{N}_+$ {there is some injective $dk\times k$-dimensional matrix $\pi$ and a linear combination $\bar{f}_{\Lambda_N}\in W^{s+\lceil \frac{d}{p} \rceil+1+\sigma,\infty}(\mathcal{D}^k)$} of at-most $N$ wavelets having the form~\eqref{eq:wavelet-N-app} such that {$f_{\Lambda_N}\eqdef 
\bar{f}_{\Lambda_N}\circ \pi$} and
\begin{align*}
{
    \|f-f_{\Lambda_N}\|_{W^{s,p}(\mathcal{D})}
\lesssim
    N^{-\frac{s+\lceil \frac{d}{p} \rceil+1+\sigma}{2kd}}
        \|f\|_{W^{s+\lceil \frac{d}{p} \rceil+1+\sigma,\infty}(\mathcal{D})}
    .}
\end{align*}   
\end{lemma}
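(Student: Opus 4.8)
The plan is to reduce the lifted statement to the already-proven Jackson–Bernstein estimate in Lemma~\ref{lem:Sparse_bais_approximation_smooth_function_wkp-app}, applied on the product domain $\mathcal{D}^k\subset\mathbb{R}^{kd}$, and then transport the resulting wavelet approximant back down to $\mathcal{D}$ via an affine embedding, using the composition stability of Lemma~\ref{lem:simple_composition_lemma} to control the loss of derivatives. First I would fix the embedding: let $\iota:\mathbb{R}^d\to\mathbb{R}^{kd}$ be the ``diagonal'' injective linear map $x\mapsto(x,x,\dots,x)$ (or any injective affine map with $\iota(\mathcal{D})\subseteq\mathcal{D}^k$), represented by a $kd\times d$ matrix; its left inverse is, up to scaling, the canonical projection $\Pi$ onto the first coordinate, so $\Pi\circ\iota=\mathrm{id}_{\mathcal{D}}$. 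Given $f\in W^{s+\lceil d/p\rceil+1+\sigma,\infty}(\mathcal{D})$, I would lift it to $\bar f\eqdef f\circ\Pi\in W^{s+\lceil d/p\rceil+1+\sigma,\infty}(\mathcal{D}^k)$ (projection onto a coordinate only drops, never creates, derivatives, and $\|\bar f\|_{W^{m,\infty}(\mathcal{D}^k)}=\|f\|_{W^{m,\infty}(\mathcal{D})}$ for every $m$, since $\mathcal{D}^k$ has finite volume — here the $\infty$-norm makes this clean, which is exactly why the hypothesis is stated in $W^{\cdot,\infty}$). Crucially, on $\mathcal{D}^k$ the ambient dimension is $d'=kd$, and the smoothness-budget condition \eqref{eq:choice-s-d-k} is precisely the hypothesis \eqref{eq:choice-s-d} of Lemma~\ref{lem:Sparse_bais_approximation_smooth_function_wkp-app} with $d'=kd$ and with ``$s$'' there replaced by $s+\lceil d/p\rceil+1$ and ``$\sigma$'' kept as $\sigma$ (one checks $\frac{s+\lceil d/p\rceil+1}{kd}<\frac1p<1-\frac{\sigma}{kd}$ and $s+\lceil d/p\rceil+1<\sigma$ hold, and the total regularity $(s+\lceil d/p\rceil+1)+\sigma$ matches).

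Next I would invoke Lemma~\ref{lem:Sparse_bais_approximation_smooth_function_wkp-app} on $\mathcal{D}^k$ with $f\leftarrow\bar f$: for every $N$ there is an $N$-term wavelet combination $\bar f_{\Lambda_N}$ of the form \eqref{eq:wavelet-N-app} with
\[
\|\bar f-\bar f_{\Lambda_N}\|_{W^{\,s+\lceil d/p\rceil+1,\,p}(\mathcal{D}^k)}
\lesssim N^{-\frac{s+\lceil d/p\rceil+1+\sigma}{2kd}}\|\bar f\|_{W^{\,s+\lceil d/p\rceil+1+\sigma,\infty}(\mathcal{D}^k)}
= N^{-\frac{s+\lceil d/p\rceil+1+\sigma}{2kd}}\|f\|_{W^{\,s+\lceil d/p\rceil+1+\sigma,\infty}(\mathcal{D})}.
\]
Now set $f_{\Lambda_N}\eqdef \bar f_{\Lambda_N}\circ\iota$ on $\mathcal{D}$; note $\bar f\circ\iota=f\circ\Pi\circ\iota=f$. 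Applying Lemma~\ref{lem:simple_composition_lemma} with $g=\iota$ (affine, $\iota(\mathcal{D})\subseteq\mathcal{D}^k$), with ``$k'$'' there equal to $s+\lceil d/p\rceil+1$ and ``$k$'' there equal to $s$ — which is the exact relation $k'=k+\lceil d/p\rceil+1$ required by that lemma — gives
\[
\|f-f_{\Lambda_N}\|_{W^{s,p}(\mathcal{D})}
=\|(\bar f-\bar f_{\Lambda_N})\circ\iota\|_{W^{s,p}(\mathcal{D})}
\lesssim \|\iota\|_{op}^{s}\,\|\bar f-\bar f_{\Lambda_N}\|_{W^{\,s+\lceil d/p\rceil+1,\,p}(\mathcal{D}^k)},
\]
and since $\|\iota\|_{op}$ is an absolute constant depending only on $k$, combining the two displays yields the claimed bound. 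The matrix $\pi$ in the statement is $\iota$; one also records $\bar f_{\Lambda_N}\in W^{s+\lceil d/p\rceil+1+\sigma,\infty}(\mathcal{D}^k)$ because it is a finite linear combination of smooth compactly supported wavelets.

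The main obstacle — and the conceptual heart of the lemma — is not any single inequality but getting the bookkeeping of the three smoothness parameters to line up simultaneously. The wavelet Jackson–Bernstein machinery (Lemma~\ref{lem:Sparse_bais_approximation_smooth_function_wkp-app}) can only approximate in $W^{s',p}$ with $s'/d'<1/p$, i.e.\ it wants the ambient dimension $d'$ large relative to the target smoothness; but to then \emph{restrict} the approximant from $\mathbb{R}^{kd}$ down to $\mathbb{R}^d$ without losing control (Lemma~\ref{lem:simple_composition_lemma}, which is really just Sobolev embedding $W^{k',p}\hookrightarrow C^{k,\gamma}$ plus the chain rule) one must spend exactly $\lceil d/p\rceil+1$ derivatives. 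So the approximation must be carried out in $W^{s+\lceil d/p\rceil+1,p}(\mathcal{D}^k)$, not $W^{s,p}$, which forces the Sobolev-embedding-type constraint \eqref{eq:choice-s-d-k}(i) with the shifted numerator, and the Jackson rate $N^{-(s+\lceil d/p\rceil+1+\sigma)/(2kd)}$ then needs $f$ to have $\sigma$ extra derivatives beyond that, subject to \eqref{eq:choice-s-d-k}(ii). Verifying that these constraints are mutually satisfiable — which is the content of Assumption~\ref{ass:sigma_s_k_p__regularity} and the role of choosing $k$ large — and checking that the total regularity accounting is consistent is the delicate part; the analytic estimates themselves are then immediate consequences of the cited lemmata.
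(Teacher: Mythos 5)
Your proof is correct and follows essentially the same route as the paper's: extend $f$ to $\bar f=f\circ\Pi$ on the lifted domain $\mathcal{D}^k$, invoke the Jackson--Bernstein estimate (Lemma~\ref{lem:Sparse_bais_approximation_smooth_function_wkp-app}) in ambient dimension $d'=kd$ with the shifted smoothness index $s+\lceil d/p\rceil+1$ (exactly matching condition~\eqref{eq:choice-s-d-k}), and pull the $N$-term approximant back through an affine section using Lemma~\ref{lem:simple_composition_lemma}, paying the $\lceil d/p\rceil+1$ Sobolev-embedding toll. The only cosmetic deviation is your choice of the diagonal embedding $\iota(x)=(x,\dots,x)$, whereas the paper fills the trailing blocks with a fixed point $a\in\mathcal{D}$, taking $\pi(x_1)=(x_1,a,\dots,a)$; both are affine, map $\mathcal{D}$ into $\mathcal{D}^k$, and satisfy $\Pi\circ(\text{section})=\mathrm{id}$, so Lemma~\ref{lem:simple_composition_lemma} applies to either and $\|\iota\|_{op}^{s}=k^{s/2}$ is harmlessly absorbed into the implicit constant. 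The paper's constant-fill choice does secure one extra property (remarked in its proof but not required by the statement nor by the downstream Lemma~\ref{lem:expansion-Green}): freezing the trailing $k-1$ blocks at $a$ respects the separability of tensor-product wavelets, so $f_{\Lambda_N}=\bar f_{\Lambda_N}\circ\pi$ is again an $N$-term combination of genuine $d$-dimensional wavelets, whereas your diagonal pullback turns each lifted wavelet into a product of several wavelets in the same $d$ variables and hence leaves the $d$-dimensional wavelet dictionary; since only $\bar f_{\Lambda_N}$ is required to be a wavelet combination on $\mathcal{D}^k$, this does not affect the conclusion.
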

{
While $\Pi:\mathcal{D}^k\to \mathcal{D}$ is defined by the canonical projection onto the first coordinate, i.e., $\Pi(x_1,...,x_k)=x_1$, the injective map $\pi : \mathcal{D}\to \mathcal{D}^k$ will be constructed by the embedding map with fixed first coordinate, i.e., $\pi(x_1) = (x_1, a, \cdots, a)$ where some $a \in \mathcal{D}$.
}

\begin{proof}
First we denote by 
\[
\mathcal{D}^k = \underbrace{\mathcal{D} \times \cdots \times \mathcal{D}}_{k} \subset \mathbb{R}^{kd}.
\]
We define by 
\[
\bar{f}(x) \eqdef f(x_1) \in W^{s+\lceil \frac{d}{p} \rceil+1+\sigma, \infty}_x(\mathcal{D}^k),
\]
where $x=(x_1,...,x_k) \in \mathcal{D}^k$. 
By applying Lemma~\ref{lem:Sparse_bais_approximation_smooth_function_wkp-app} as $\Omega = \mathcal{D}^k$ and $f=\bar{f}$, there is some at-most $N$-term combination $\bar{f}_{\Lambda_N}\in W^{s+\sigma,\infty}(\Omega)$ of the wavelets such that
\[
\|\bar{f} - \bar{f}_{\Lambda_N}\|_{W^{s+ \lceil \frac{d}{p} \rceil + 1,p}(\mathcal{D}^k)} \lesssim 
    N^{-\frac{s + \lceil \frac{d}{p} \rceil + 1+\sigma}{2kd}}
        \|\bar{f}\|_{W^{s+ \lceil \frac{d}{p} \rceil + 1 + \sigma,\infty}(\mathcal{D}^k)}
        \lesssim 
    N^{-\frac{s+\lceil \frac{d}{p} \rceil+1+\sigma}{2kd}}
        \|f\|_{W^{s+\lceil \frac{d}{p} \rceil+1+\sigma,\infty}(\mathcal{D})},
\]
where we have used that 
\[
\|\bar{f}\|_{W^{s+\lceil \frac{d}{p} \rceil+1+\sigma,\infty}(\mathcal{D}^k)}
        \leq C_{\mathcal{D}, k, s, p, \sigma}
        \|f\|_{W^{s+\lceil \frac{d}{p} \rceil+1+\sigma,\infty}(\mathcal{D})}.
\]
for some constant $C_{\mathcal{D}, k, s, p, \sigma}$ depending only $\mathcal{D}, k, s, p, \sigma$.

We define by (some $a \in \mathcal{D}$)
\[
f_{\Lambda_N}(x_1) \eqdef \bar{f}_{\Lambda_N}(x_1, a, \cdots, a) \in W^{s+\sigma, \infty}_{x_1}(\mathcal{D}),
\]
which is still some at-most $N$-term combination of the ($d$-dimensional) wavelets since wavelet functions are separable for each variable. 

\noindent Next, we define $\pi : \mathcal{D} \to \mathcal{D}^k$ by 
\[
\pi(x_1) \eqdef (x_1, a, \cdots, a).
\]
Using this notation, we see that 
\[
f(x_1) - f_{\Lambda_N}(x_1)
=
\bar{f}(\pi(x_1)) - \bar{f}_{\Lambda_N}(\pi(x_1))
\]
Applying Lemma~\ref{lem:simple_composition_lemma} as $\Omega_2=\mathcal{D}^k$, $\Omega_1=\mathcal{D}$, and $g=\pi$, we see that 
\begin{align*}
\|f - f_{\Lambda_N}\|_{W^{s,p}(\mathcal{D})}
&
=
\|(\bar{f} - \bar{f}_{\Lambda_N}) \circ \pi \|_{W^{s,p}(\mathcal{D})}
\\
&
\lesssim 
\|\bar{f} - \bar{f}_N \|_{W^{s + \lceil \frac{d}{p} \rceil+1, p}(\mathcal{D}^k)}
\\
&
\lesssim 
    N^{-\frac{s+\lceil \frac{d}{p} \rceil+1+\sigma}{2kd}}
        \|f\|_{W^{s+\lceil \frac{d}{p} \rceil+1+\sigma,\infty}(\mathcal{D})}.
\end{align*}
Thus, we obtain Lemma~\ref{lem:wavelet-sob-rate-app}.
\end{proof}

{{
\section{Background on Wavelets}
\label{a:Background_Wavelets}
This appendix contains additional background required for the formulation of our main results.

\subsection{Lipschitz Domains}
\label{a:Background__aa:Lipschitz_Domains}

We begin by quantifying the regularity of the very domains our PDEs and FBSDEs will be defined on.  
Suppose that $d\ge 2$ is an integer.  A \textit{domain} $\mathcal{D}\subseteq \mathbb{R}^d$ is said to be \textit{special Lipschitz} if there exists a Lipschitz map $\beta:\mathbb{R}^{d-1}\to \mathbb{R}$ such that 
\begin{equation*}
        \mathcal{D} 
    = 
        \big\{
                (\tilde{x},x_d)\in \mathbb{R}^{d-1}\times \mathbb{R}
            :\,
                \beta(x) < x_d
        \big\}
.
\end{equation*}
A \textit{bounded Lipschitz domain} $\mathcal{D}\subset \mathbb{R}^d$ is a bounded domain $\mathcal{D}$ for which there exists a finite number of open balls $B_1,\dots,B_N$; where for $n=1,\dots,N$ we have $B_n\eqdef \{x\in \mathbb{R}^d:\, \|x-x^{(n)}\|<r^{(n)}\}$ for some $x^{(n)}\in \partial\mathcal{D}$ and some $r^{(n)}>0$, such that $\{B_n\}_{n=1}^N$ is a cover of $\partial\mathcal{D}$, and there exist rotations of special Lipschitz domains $\mathcal{D}_1,\dots,\mathcal{D}_N\subseteq \mathbb{R}^d$ for which, for each $n=1,\dots,N$, we have
\[
    B_n\cap \mathcal{D}
    =
    B_n\cap \mathcal{D}_n
.
\]
Having defined the admissible domains for our PDEs and FBSDEs, we now introduce the function spaces in which their coefficients and solutions belong to.
\subsection{Besov and Triebel-Lizorkin Spaces over Domains in \texorpdfstring{$\mathbb{R}^d$}{Euclidean Spaces}}
\label{a:Background__aa:BesovTriebelLizorkin}

Fix $d\in \mathbb{N}_+$ and $s\ge 0$.  
Let $\psi_F,\psi_M\in C^s(\mathbb{R})$ be compactly supported \textit{father} and \textit{mother} wavelets respectively; i.e.\ generating a multiresolution of $L_2(\mathbb{R})$.  
Let $G^0\eqdef \{F,M\}$ and for each $i\in \mathbb{N}_+$ let $G^i\eqdef \{(G_1,\dots,G_d)\in \{F,M\}^d:\, F,M \in \cup_{i=1}^d\, \{G_i\}\}$.  
Define the multivariate scaling and wavelet functions family of functions $\{\Psi_{G,m}\}_{m\in \mathbb{Z}^d
,
\,
G\in 
}$ by
\begin{equation}
\label{eq:wavelets_Rd}
\begin{aligned}
    \Psi_{G,m} 
    \eqdef 
    \prod_{i=1}^d\,
        \psi_{G_i}(x_i - m_i)
    \mbox{ and }
    \Psi_{\tilde{G},m}^j
    \eqdef 
    2^{jd/2}
    \Psi_{\tilde{G},m}(2^j\,x)
\end{aligned}
\end{equation}
for each $j\in \mathbb{N}_+$, $m\in \mathbb{Z}^d$, $G\in G^0$, and $ \tilde{G}\in G^j$.
\hfill\\
\noindent
Next, consider some $\varphi_0$ ``bump transition'' in the Schwartz space $\mathcal{S}(\mathbb{R}^d)$ satisfying $\varphi_0(x)=1 $ if $\|x\|\le 1$ and $\varphi_0(y)$ if $\|x\|\ge 3/2$.  For instance, set
\[
\begin{aligned}
        \varphi_0(x) & \eqdef \prod_{i=1}^d\, \big(
        \tilde{\varphi}(2(x+3/2))
        -
        \tilde{\varphi}(2(x-3/2))
    \big)
,
\\
    \tilde{\varphi}(x)& \eqdef 
    \frac{1}{2}\left( 1-\tanh\left(\frac{2x-1}{2(x^2-x)} \right) \right)I_{0<x<1} + I_{x\ge 1}
.
\end{aligned}
\]
Following the exposition in~\citep[page 208]{Triebel_FSBookIII_2006}, using $\varphi_0$, we define a \textit{dyadic resolution of unity of $\mathbb{R}^d$}
$\{\varphi_k\}_{k=0}^{\infty}$ as follows:
for every $k\in \mathbb{N}_+$ define $
    \varphi_k 
=
 \varphi_0(\cdot /2^k)
 -
 \varphi_0(\cdot /2^{k-1})
.
$

In what follows, we operate on the \textit{Schwartz space} $\mathcal{S}(\mathbb{R}^d)^{\prime}$, defined by
\[
\mathcal{S} \left(\mathbb{R}^d\right) \eqdef
\left \{ f \in C^\infty(\mathbb{R}^d) \mid \forall {\alpha},{\beta}\in\mathbb{N}^d, \|f\|_{{\alpha},{\beta}}< \infty\right \},
\]
where, for every multi-indices $\alpha,\beta\in \mathbb{N}^d$, we define the semi-norm $\|f\|_{\alpha,{\beta}}\eqdef 
\sup_{x\in\mathbb{R}^n} \left| x^\alpha (D^{\beta} f)(x) \right|$.
We also use
$\hat{\cdot}$ and $\cdot^{\vee}$ to, respectively, denote the (standard) extensions of the \textit{Fourier transform} and its \textit{inverse} to the Schwartz space $\mathcal{S}(\mathbb{R}^d)$.
\begin{definition}[Besov and Triebel-Lizorkin Spaces on $\mathbb{R}^d$]
\label{def:Besov}
Fix $0<p,q\le \infty$ and $s\in \mathbb{R}^d$.  Respectively, define the quasi-norm $\|\cdot\|_{B_{p,q}^s}$ and 
$\|\cdot\|_{F_{p,q}^s}$
for any $f\in \mathcal{S}(\mathbb{R}^d)^{\prime}$ by
\begin{equation*}
\label{eq:BesovTriebelLotzkin_quasinorm}
\begin{aligned}
        \|f\|_{B_{p,q}^s}
    & \eqdef 
        \big\|
            \big(
                2^{ksq}
                \,
                \|
                    (\phi_j \hat{f})^{\vee}
                \|_{L^p(\mathbb{R}^d)}
            \big)_{k=0}^{\infty}
        \big\|_{h^q}
\\
        \|f\|_{F_{p,q}^s}
    & \eqdef 
        \big\|
            \big\|
                (
                    2^{jsq}
                    |
                        (\varphi_j \hat{f})^{\vee}
                    |
                )
            \big\|_{h^q}
        \big\|_{L^p(\mathbb{R}^d)}
.
\end{aligned}
\end{equation*}
Then, the \textit{Besov} $B_{p,q}^s(\mathbb{R}^d)$ and \textit{Triebel-Lizorkin} $F_{p,q}^s(\mathbb{R}^d)$ spaces are the set of generalized function $f\in \mathcal{S}(\mathbb{R}^d)^{\prime}$ for which $\|f\|_{B_{p,q}^s}<\infty$ or $\|f\|_{F_{p,q}^s}<\infty$, respectively.
\end{definition}
One may extend the definitions of Besov and Triebel-Lizorkin spaces to domains $\mathcal{D}$ in $\mathbb{R}^d$ restriction and extension as follows.  Let $D(\mathcal{D})$ denote the space of distributions on a domain $\mathcal{D}$.
\begin{definition}[Besov and Triebel-Lizorkin Spaces on Domains in $\mathbb{R}^d$]
\label{def:Besov}
Fix $0<p,q\le \infty$ and $s\in \mathbb{R}^d$ ($p<\infty$ for the $F_{p,q}^s(\mathcal{D})$ case).  
The space $B_{p,q}^s(\mathcal{D})$ (resp.\ $F_{p,q}^s(\mathcal{D})$) consist of all $f\in \mathcal{D}(\mathcal{D})^{\prime}$ for which there exists some extension $G\in B_{p,q}^s(\mathbb{R}^d)$ (resp.\ $G\in F_{p,q}^s(\mathbb{R}^d)$), i.e.\ $G|_{\mathcal{D}}=g$%
\footnote{I.e.\ the ``restriction'' $G|_{\mathcal{D}}$ of a $G\in \mathcal{D}(\mathbb{R}^d)$ to $\mathcal{D}$ is defined as follows: $G|_{\mathcal{D}}\in \mathcal{D}(\mathcal{D})^{\prime}$ and for each test function $\phi\in \mathcal{D}(\mathcal{D})$ one has $G|_{\mathcal{D}}(\phi)=g(\phi)$.}%
.
The norm $\|\cdot\|_{B_{p,q}^s(\mathcal{D})}$ (resp.\ $\|\cdot\|_{F_{p,q}^s(\mathcal{D})}$) is given by minimal-norm extension via
\[
        \|g\|_{B_{p,q}^s(\mathcal{D})}
    \eqdef 
        \inf_G\,
            \|G\|_{B_{p,q}^s(\mathbb{R}^d)}
\]
(and similarly $\|\cdot\|_{F_{p,q}^s(\mathcal{D})}\eqdef \inf\, \|G\|_{F_{p,q}^s(\mathbb{R}^d)}$) with the infimum taken over all extensions $G \in B_{p,q}^s(\mathbb{R}^d)$ (resp.\ $G\in F_{p,q}^s(\mathbb{R}^d)$) of $g$.
\hfill\\
We denote $\tilde{B}_{p,q}^s(\mathcal{D})$ (resp.\ $F_{p,q}^s(\mathcal{D})$) the closed subspace of $B_{p,q}^s(\mathcal{D})$ (resp.\ $F_{p,q}^s(\mathcal{D})$) consisting of those $g$ for which $\operatorname{supp}(g)\subseteq \bar{\mathcal{D}}$.
If $1<p,q<\infty$ and $\mathcal{D}$ is a bounded Lipschitz domain then, we define%
\footnote{See~\citep[Equation (1.340)]{Triebel_FSBookIII_2006} for a justification of the difference in our definition with that of~\citep[Equation (1.344)]{Triebel_FSBookIII_2006}. }%
\[
        \bar{B}_{p,q}^s(\mathcal{D})
    \eqdef 
        \begin{cases}
            \tilde{B}_{p,q}^s(\mathcal{D}) & \, s>p^{-1}-1\\
            B_{p,q}^s(\mathcal{D}) & \, s \le p^{-1}-1
        \end{cases}
\]
(with the analogous modifications similarly for $\bar{F}_{p,q}^s(\mathcal{D})$).
\end{definition}

Having specified the function spaces for our PDEs' coefficients and solutions, we now construct frames that characterize these functions via their decay rates.
\subsection{Wavelet Frames for Besov and Triebel-Lizorkin Spaces}
\label{a:Background__aa:BesovTriebelLizorkin__WaveletBasises}

By the Whitney decomposition theorem applied to $\mathcal{D}$, there exists a positive integer such that: for each $l\in \mathbb{N}$ and $r\in [M]=1,\dots,M^l \asymp 2^{(d-1)l}$ there is some $m\in \mathbb{Z}^d$ and there are concentric cubes
\[
\begin{aligned}
Q^0_{lr} & \eqdef  \prod_{i=1}^d\, 
    \Big(\frac{(m_r)_i-1}{2^l},\frac{(m_r)_i+1}{2^l}\Big)
\\
Q^1_{lr} & \eqdef 
    \Big(\frac{(m_r)_i-
        5/4
    }{2^l},\frac{(m_r)_i
        +5/4
    }{2^l}
    \Big)
\\
Q^2_{lr} &\eqdef 
    \Big(\frac{(m_r)_i
        -
        3/2
    }{2^l},\frac{(m_r)_i
        +
        3/2
    }{2^l}
    \Big)
\\
Q_{lr} & \eqdef 
    \Big(\frac{(m_r)_i
        -
        2
    }{2^l},\frac{(m_r)_i
        +
        2
    }{2^l}
    \Big)
\end{aligned}
\]
such that $\mathcal{D}$ is covered by the closures of these cubes $\{Q^0_{lr}\}_{lr}$, $\{Q^0_{lr}\}_{lr}$ are pairwise disjoint, and $\|Q_{lr}-\partial \mathcal{D}\|\eqdef \sup_{u\in Q_{lr}}\inf_{z\in \partial \mathcal{D}}\, \|z-u\| \asymp \frac1{2^l}$ for each $l\in \mathbb{N}$ and each $r=1,\dots,M^l$.

Using the cubes from the Whitney partition, $F$, $M$, and $d^{\star}$, we may define our families of refining \textit{main} and \textit{residual} indexing sets using our multiresolution analysis.  
For each $j\in \mathbb{N}$, define the respective main and residual indexing sets $S_j^{\mathcal{D},1}$ and $S_j^{\mathcal{D},2}$ by
\[
\begin{aligned}
    S_j^{\mathcal{D},1}& 
\eqdef 
        \{F,M\}^{d^{\star}}
    \times 
        \big\{
            m\in \mathbb{Z}^d:\,
            (\exists l<r \mbox{ and } r)\, \frac{m}{2^{jL}} \in Q^2_{lr}
        \big\}
\\
    S_j^{\mathcal{D},2}& 
\eqdef 
    \{F,M\}^{d}
    \times 
        \big\{
            m\in \mathbb{Z}^d:\,
            (\exists r)\, \frac{m}{2^{jL}} \in Q^2_{jr}
        \big\}
    \setminus
        S^{\mathcal{D},1}_j
.
\end{aligned}
\]
For $i=1,2$, define $\mathcal{S}^{\mathcal{D},i}\eqdef \cup_{j=0}^{\infty}\, \mathcal{S}_j^{\mathcal{D},i}$ and $\mathcal{S}^{\mathcal{D}}\eqdef \mathcal{S}^{\mathcal{D},1}\cup \mathcal{S}^{\mathcal{D},2}$.  
We sub-select wavelets from the wavelet system in~\eqref{eq:wavelets_Rd} using these indexing sets via
\[
        \Psi^{i,\mathcal{D}}
    \eqdef 
        \{
            \Psi_{G,m}^j
            :
            \,
                (j,G,m)\in S^{\mathcal{D},i}
        \}
\]
for $i=1,2$.  One can show that $\Psi^{1,\mathcal{D}}$ forms an orthogonal system in $L^2(\mathcal{D})$ and that $\Psi^{1,\mathcal{D}}$ and is orthogonal to the span of $\Psi^{2,\mathcal{D}}$.
Indeed, as noted circa~\citep[Equation (4.102)]{Triebel_FSBookIII_2006} $L^2(\mathcal{D}) = \oplus_{i=1}^2\, L^{2:i}(\mathcal{D})$ where $L^{2:i}$ is the closed linear space on $\Psi^{i,\mathcal{D}}$; for $i=1,2$.

The follwoing can be found in~\cite[Corollary 4.28]{Triebel_FSBookIII_2006}.
\begin{lemma}[{Wavelet Para-Basises in Besov on Bounded Lipschitz Domains}]
\label{lem:parabasis}
Fix $1<p,q<\infty$, $5d/2<K$, and $s\in (-K,K)$, for a sufficiently large $K>0$.
Then, $f\in \mathcal{D}(\mathcal{D})^{\prime}$ belongs to $\bar{B}_{p,q}^s(\mathcal{D})$ (resp.\ $\bar{F}_{p,q}^s(\mathcal{D})$) if and only if admits the representation
\begin{equation}
\label{eq:Expansion}
        f 
    = 
        \sum_{(j,G,m)\in S^{\mathcal{D}}}\,
            \lambda_m^{j,G}2^{-jn/2}
            \Psi_{G,m}^j
\end{equation}
and the following respectively hold
\begin{enumerate}
    \item[(i)] \textbf{Besov Case:} $
        \Big\|
            \big(
                2^{j(s-d/p)}
                \big\|
                    (
                        \lambda_m^{j,G}
                    )_{G,m\in S^{\mathcal{D}}_j}
                \big\|_{h^q}
            \big)_{j=0}^{\infty}
        \Big\|_{h^p} < \infty
    $
    \item[(ii)] \textbf{Triebel-Lizorkin Case:} $
    \Big\|
        \big\|
            \big(
                2^{js}
                \lambda_{m}^{j,G} I_{j,m}(\cdot)
            \big)_{j,G,m}
        \big\|_{h^q}
    \Big\|_{L_p(\mathcal{D})} < \infty
    $
\end{enumerate}
where $I_{j,m}(\cdot)$ is the indicator function of the $d$-dimensional cube centered at $2^{-j}m$ with side-lengths $2^{-j-1}$ and sides parallel to the coordinate axes.
\end{lemma}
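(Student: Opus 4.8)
The plan is to deduce Lemma~\ref{lem:parabasis} from the wavelet characterization of Besov and Triebel--Lizorkin spaces on all of $\mathbb{R}^d$ together with the bounded linear extension property of bounded Lipschitz domains; this is exactly the content of \cite[Corollary 4.28]{Triebel_FSBookIII_2006}, so the task reduces to verifying that the boundary-adapted wavelet system $\Psi^{1,\mathcal{D}}\cup\Psi^{2,\mathcal{D}}$ built above via the Whitney decomposition in Appendix~\ref{a:Background__aa:BesovTriebelLizorkin__WaveletBasises} meets the hypotheses of that corollary for the stated range of $(s,p,q)$. First I would recall the classical fact that, for compactly supported $C^K$ father/mother wavelets with $K>5d/2$ and $|s|<K$, every $f\in B^s_{p,q}(\mathbb{R}^d)$ (resp.\ $F^s_{p,q}(\mathbb{R}^d)$) admits an unconditional expansion $f=\sum_{j,G,m}\lambda^{j,G}_m 2^{-jd/2}\Psi^j_{G,m}$ with the coefficient-norm equivalences of \term(i) and \term(ii), the coefficients being $\lambda^{j,G}_m=2^{jd/2}\langle f,\Psi^j_{G,m}\rangle$; this is the wavelet isomorphism underlying~\eqref{eq:wavelets_Rd}. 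Combined with the existence of a bounded linear extension operator $E:B^s_{p,q}(\mathcal{D})\to B^s_{p,q}(\mathbb{R}^d)$ (and likewise in the Triebel--Lizorkin scale) for bounded Lipschitz $\mathcal{D}$ and all $|s|<K$, $1<p,q<\infty$, the restriction-of-extension trick identifies $\bar B^s_{p,q}(\mathcal{D})$ (resp.\ $\bar F^s_{p,q}(\mathcal{D})$) up to norm equivalence with a complemented subspace of the $\mathbb{R}^d$ scale.

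Concretely I would carry out: (1) fix $K>5d/2$ so that $s\in(-K,K)$ lies in the admissible window; (2) obtain the ``$\Leftarrow$'' direction by noting that any $f$ written as in~\eqref{eq:Expansion} with the summability in \term(i) or \term(ii) is a convergent superposition of atoms, hence lies in the space, with norm controlled by the sequence norm; (3) for ``$\Rightarrow$'', extend $f$ to $\tilde f\in B^s_{p,q}(\mathbb{R}^d)$, expand $\tilde f$ in the full wavelet system, discard the wavelets supported entirely outside $\bar{\mathcal{D}}$, and use the Whitney geometry --- $\operatorname{dist}(Q_{lr},\partial\mathcal{D})\asymp 2^{-l}$, pairwise disjointness of $\{Q^0_{lr}\}$, and bounded overlap of the dilated cubes $\{Q^2_{lr}\}$ --- to reorganize the surviving coefficients into the refining index sets $\mathcal{S}^{\mathcal{D},1}$ (deep interior) and $\mathcal{S}^{\mathcal{D},2}$ (the dyadic boundary layer); (4) check the frame/orthogonality statements $\Psi^{1,\mathcal{D}}\perp\operatorname{span}\Psi^{2,\mathcal{D}}$ and $L^2(\mathcal{D})=L^{2:1}(\mathcal{D})\oplus L^{2:2}(\mathcal{D})$, which follow from disjointness of supports of the interior wavelets plus a direct computation near the boundary.

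The step I expect to be the main obstacle is (3): showing that the boundary-layer reorganization does not inflate the sequence-space norm, i.e.\ that the portion of the expansion indexed by $\mathcal{S}^{\mathcal{D},2}$ --- built from wavelets whose supports hug $\partial\mathcal{D}$ at scale $2^{-l}$ --- is still controlled by $\|f\|_{\bar B^s_{p,q}(\mathcal{D})}$. This is where one must invoke a Hardy-type inequality quantifying how the Besov/Triebel--Lizorkin norm on a Lipschitz domain perceives the distance to the boundary, and it is precisely here that the admissible range of $(s,p,q)$ and the Lipschitz (rather than merely $C^0$) regularity of $\partial\mathcal{D}$ enter in an essential way; the remaining steps are bookkeeping against the $\mathbb{R}^d$ characterization and the extension theorem. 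In the present paper this lemma is used only as a black box to underwrite the wavelet frames of Definition~\ref{def:neural-operator}, so in the write-up I would state it with the citation to \cite[Corollary 4.28]{Triebel_FSBookIII_2006} and include only the verification of hypotheses, deferring the analytic core above to that reference.
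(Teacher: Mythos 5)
The paper gives no proof of this lemma at all: it is imported verbatim from Triebel's monograph (Corollary 4.28), so your concluding plan — state the result with the citation and verify only that the Whitney-adapted wavelet system constructed in the appendix satisfies its hypotheses — matches the paper exactly. Your sketch of what a full argument would involve (bounded extension operator for Lipschitz domains plus the $\mathbb{R}^d$ wavelet isomorphism, boundary-layer reorganization along the Whitney cubes, Hardy-type control of the $\mathcal{S}^{\mathcal{D},2}$ coefficients) is a faithful high-level summary of Triebel's internal machinery, but none of it is reproduced in the paper.
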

Having briefly reviewed wavelet theory in its natural setting, Besov and Triebel–Lizorkin spaces, where wavelet constructions are most naturally expressed, we now relate these general ideas back to the Sobolev and $L^p$ spaces considered in this work.
\paragraph{Relationships to Other Spaces}
If $p=q$ then $\bar{B}_{p,q}^s(\mathcal{D})$ coincides with the \textit{Sobolev-Slobodeckij} space $W^{p,s}(\mathcal{D})$.   
If $s=0$ and $1<p=q<\infty$, then $\bar{F}_{p,q}^s(\mathcal{D})=L^p(\mathcal{D})$. 
If the reader is interested in delving into further details on wavelets, we recommend~\cite{triebel2008function} and~\cite{Cohen2003_NumAnalWaveletMethodsBook} for a more a more numerical-analysis aligned reading.
}}

\section*{Acknowledgements and Funding}
\noindent T.\ Furuya is supported by JSPS KAKENHI Grant Number JP24K16949 and JST ASPIRE JPMJAP2329.
A.\ Kratsios acknowledges financial support from an NSERC Discovery Grant No.\ RGPIN-2023-04482 and No.\ DGECR-2023-00230.  A.\ Kratsios also acknowledges that resources used in preparing this research were provided, in part, by the Province of Ontario, the Government of Canada through CIFAR, and companies sponsoring the Vector Institute\footnote{\href{https://vectorinstitute.ai/partnerships/current-partners/}{https://vectorinstitute.ai/partnerships/current-partners/}}.

\bibliography{Bookkeaping/Refs}

\end{document}